\documentclass[10pt,a4paper]{article}
\usepackage[utf8]{inputenc}
\usepackage{tikz}
\usepackage[width=14.00cm, height=25.00cm]{geometry}
\usepackage[T1]{fontenc}
\usepackage{amsmath,amssymb,amsfonts,amsthm,amscd, bm, mathtools}
\usepackage{enumerate}
\usepackage{todonotes}
\usepackage{tikz}
\usepackage{color}
\usepackage[colorlinks=false]{hyperref}
\usepackage[capitalize]{cleveref}
\newcommand{\myref}[1]{\hyperref[#1]{\namecref{#1} \ref{#1}}}
\newcommand{\st}{\, : \,}

\usetikzlibrary{positioning}
\usetikzlibrary{decorations.pathmorphing}
\usetikzlibrary{decorations.text}
\newtheorem{thm}{Theorem}[section]
\newtheorem*{thm1}{Theorem \ref{t3}}
\newtheorem*{thm2}{Theorem \ref{t4}}
\newtheorem*{thm3}{Theorem \ref{generalcasegdelta}}
\newtheorem*{thm4}{Theorem \ref{end-counter-example}}
\newtheorem{prop}[thm]{Proposition}
\newtheorem{lemma}[thm]{Lemma}
\newtheorem{cor}[thm]{Corollary}
\newtheorem{defn}[thm]{Definition}

\newtheorem{ex}[thm]{Example}

\newtheorem{question}[thm]{Question}

\newcommand{\catname}[1]{{\normalfont\textbf{#1}}}

\title{A topological characterization of end space of infinite graphs via games, subspaces and products.}
\author{\normalsize Leandro Aurichi, Gustavo Boska, Davide Giacopello and Paulo Magalhães Júnior}

\newcommand{\Addresses}{{
  \bigskip
  \footnotesize
  L. ~Aurichi, \textsc{Instituto de Ci\^encias Matem\'aticas e de Computa\c c\~ao, Universidade de S\~ao Paulo\\
	Avenida Trabalhador s\~ao-carlense, 400,  S\~ao Carlos, SP, 13566-590, Brazil}\par\nopagebreak
  \textit{E-mail address}, L.~Aurichi: \texttt{aurichi@icmc.usp.br}

 \medskip
  G.~Boska, \textsc{Instituto de Ci\^encias Matem\'aticas e de Computa\c c\~ao, Universidade de S\~ao Paulo\\
	Avenida Trabalhador s\~ao-carlense, 400,  S\~ao Carlos, SP, 13566-590, Brazil}\par\nopagebreak
  \textit{E-mail address}, G.~Boska: \texttt{gustavo.boska@usp.br}

   \medskip
  D.~Giacopello, \textsc{MIFT Department, University of Messina, Italy}\par\nopagebreak
  \textit{E-mail address}, D.~Giacopello: \texttt{dagiacopello@unime.it}
  
  \medskip
  P.~Magalhães Júnior, \textsc{Instituto Federal do Rio Grande do Norte.}\par\nopagebreak
  \textit{E-mail address}, P.~Magalhães Júnior: \texttt{paulo.magalhaes@ifrn.edu.br}
 
}}

\date{}

\begin{document}

\maketitle

\begin{abstract}
In 1992, Diestel asked which topological spaces could be represented as the end space of some graph. In 2023, Pitz provided a solution to this question by giving a topological characterization of end spaces using a hereditarily complete special subbase. In this paper, we present an alternative topological characterization of end spaces, in which we employ a special subbase and a topological game. Furthermore, we provide several applications of this characterization: we show that every end space is hereditarily Baire, that $G_{\delta}$ subspaces of end spaces are also end spaces, and that the product of end spaces is not always an end space.

\end{abstract}

\section{Introduction}
\paragraph{}

In graph theory, a \textbf{ray} in an infinite graph is a one-way infinite path; furthermore, its infinite connected subgraphs are called \textbf{tails}. In 1964, Halin \cite{Halin} defined the following equivalence relation on the set of rays, $\mathcal{R}(G)$, of an infinite graph $G$: two rays $r$ and $s$ in an infinite graph $G$ are said to be equivalent, denoted by $r \sim s$, if for every finite set of vertices $X \subset V(G)$, $r$ and $s$ have tails in the same connected component of $G - X$. Based on this, Halin defined the \textbf{ends} of an infinite graph $G$ as the equivalence classes of $\mathcal{R}(G)/\sim$. This definition is inspired by other definitions of ends in different contexts, such as topology \cite{Freudenthal} and algebra \cite{Hopf}. The set of ends of an infinite graph $G$ is called the \textbf{end space} of $G$ and is denoted by $\Omega(G)$.

Given a finite set of vertices $X \subset V(G)$ and an end $\varepsilon \in \Omega(G)$, there exists a unique connected component $C$ of $G - X$ that contains a tail of every ray in $\varepsilon$. We denote this component by $C(X, \varepsilon)$. The set $\Omega(G)$ is commonly viewed as a topological space; this is achieved by defining the basic open sets around an end $\varepsilon \in \Omega(G)$ as follows: given a finite set $X \subset V(G)$, we define $\Omega(X, C) = \{ \varepsilon' \in \Omega(G) : C = C(X, \varepsilon') \}$. Endowed with the topology generated by such sets, the end space of a graph $G$ is automatically Hausdorff and zero-dimensional.

The ends of a graph can be understood as the directions in which the graph extends to infinity. In the case of locally finite graphs, by endowing the graph with a 1-complex topology, its ends yield the Freudenthal compactification of the graph. Consequently, numerous applications of ends arise, such as in the extension of results from finite graphs (see \cite{locallyfinitegraphswithendsI, cycle-cocycle}); in problems within Geometric Group Theory, such as Bass-Serre theory, where Stallings's theorem uses ends to detect product structures in groups, such as amalgamated free products or HNN-extensions (see \cite{stallings}); and in the theory of Iterated Function Systems (IFS), where the fractal attractor is often identified as the end space of an infinite tree or an associated Cayley graph. In this context, the topological properties of the ends dictate the geometry of the limit set of the dynamical system (see \cite{BARNSLEY1993115, endsofschreier, attractorIFS}).

In 1992, Diestel asked in \cite{Diestelquestion} which topological spaces could be represented as the end space of some graph. Towards answering this question, many papers have been published studying the topological properties of end spaces. In 2021, Kurkofka and Pitz presented the first version of the paper \cite{representacao}, in which they prove a representation theorem for graph end spaces, reducing the study of these spaces to the ray spaces of special order trees. To state it, we will need some concepts from the theory of order trees.

Given $(T, \leq)$, we denote, as usual, $\lfloor t \rfloor \coloneqq \{s \in T \mid t \leq s\}$ and $\lceil t \rceil \coloneqq \{s \in T \mid s \leq t\}$. If $r < t$ and there is no $s \in T$ such that $r < s < t$, we say that $t$ is a successor of $r$. An order-theoretic tree is a poset such that $\lceil t \rceil \setminus \{t\}$ is well-ordered, and therefore order-isomorphic to an ordinal (its order type) called the \textbf{height} of $t$. If this is a limit ordinal, we say $t$ is a \textbf{limit element} or has \textbf{limit height}.  The height of the tree is the supremum of the heights of its elements. Notice that if $T$ has height $\alpha$, then for every $\beta < \alpha$, there must be an element $t \in T$ of height $\beta$. An \textbf{antichain} of a poset $(T, \leq)$ is a subset of $T$ whose elements are pairwise incomparable. We say a tree is \textbf{special} if its elements can be covered by a countable union of antichains.  We define \textbf{rays} as $\mathcal{R}(T) \coloneqq \{P \subseteq T : P \text{ is a chain with no maximal element}\}$. A ray $\omega$ may have (possibly infinite) \textbf{tops}, which are elements $t \in T$ such that $\lceil t \rceil \setminus \{t\} = \omega$. The topology of the ray space of a special tree is described by declaring the basic open sets around an element $r \in \mathcal{R}(T)$ as follows:

\begin{equation} \label{intrinseco}
    [t,F] = \{s \in \mathcal{R}(T): t\in s\text{ and }t'\notin s\text{ for every }t'\in F\},
\end{equation} where $t\in r$ and $F\subset T$ is a finite collection of tops of $r$. 
\begin{thm}[Kurkofka and Pitz \cite{representacao}]
    Let $X$ be a topological space. $X$ is homeomorphic to the end space of a graph if, and only if, $X$ is homeomorphic to the ray space of a special tree.
\end{thm}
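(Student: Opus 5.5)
Both directions go through normal spanning trees and their generalisations. For the direction ``end space $\Rightarrow$ ray space of a special tree'', I will fix a graph $G$ and build an order tree $T$ by a transfinite construction in the style of a normal spanning tree. At each step a set $U\subseteq V(G)$ of chosen vertices is fixed, and it induces a down-closed subtree. For each component $C$ of $G-U$ whose neighbourhood $N(C)$ is finite and is a chain of the current tree, I add a new vertex above that chain, choosing it inside $C$ (greedily, or via a fixed well-order of $V(G)$).

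Graphs without normal spanning trees cause trouble: components may have infinite neighbourhood. Kurkofka and Pitz handle this by passing to a contracted, ``envelope''-type structure. I would follow that idea. I work with finite separators $X$ and the components $C(X,\varepsilon)$, and I let the nodes of $T$ be pairs $(X,C)$ chosen so that they form a nested family. The order is: $(X,C)\le (X',C')$ iff $C'\subseteq C$ and $X\subseteq X'\cup C'$-compatible. I make the choices so that every chain has well-ordered predecessors.

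Specialness comes from labelling each node by the size $|X|\in\mathbb N$ of its separator. I arrange the construction so that comparable nodes have strictly increasing labels, or at least so that each level set of the labelling is a union of countably many antichains. Then $T$ is a countable union of antichains.

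Next I define a map $\Omega(G)\to\mathcal R(T)$ and show it is a homeomorphism onto the ray space. An end $\varepsilon$ is sent to the chain of nodes $(X,C)$ with $C=C(X,\varepsilon)$; this chain has no maximum. The key verifications are:
\begin{enumerate}[(i)]
\item \emph{Injectivity.} Distinct ends are separated by some finite $X$, and the construction reaches a node containing such an $X$.
\item \emph{Surjectivity.} A ray of $T$ determines a decreasing sequence of components with finite separators. Their ``limit'' must be an end. Here the failure case is exactly when the ray has tops, and the tops correspond to vertices dominating the would-be end. The construction must guarantee that nested components always contain a ray (König-type argument along the chain), so no ray of $T$ is ``empty''.
\item \emph{Matching of bases.} The basic set $\Omega(X,C)$ corresponds to $[t,F]$, where $t$ is the node for $(X,C)$ and $F$ is the finite set of tops. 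The tops account for the finitely many vertices of $X$ that lie in the closure of the ray.
\end{enumerate}

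For the converse, given a special tree $T=\bigcup_n A_n$, I build a graph $G_T$ on vertex set $T$ as follows. I join $t$ to every $s<t$ such that $s\in A_m$ with $m<n$, where $t\in A_n$. Each $t$ thus has only finitely many ``levels'' of down-neighbours, and I take finite down-closures. The goal is that finite separators in $G_T$ correspond to finite sets of tree nodes together with their tops. The ends of $G_T$ should then correspond to the rays of $T$, since a ray of $T$ contains a graph ray because specialness gives an $\omega$-cofinal path using antichain indices. Conversely, every end's rays converge along a branch. The topology matches because removing $\lceil t\rceil$-type finite sets and the finite set $F$ of tops reproduces $[t,F]$.

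I expect the main obstacle to be the first direction for graphs without normal spanning trees. There the components have infinite attachments, and choosing a coherent nested system of separators so that the resulting poset is a genuine tree, is special, and has rays matching ends exactly is delicate. My first attempt is to use Jung-type/Halin decompositions into countably many pieces, each admitting a normal tree with countably many levels, and to glue the pieces along a well-order.
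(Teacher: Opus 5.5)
The paper does not prove this statement; it imports it from Kurkofka--Pitz. Your argument therefore has to stand on its own, and it does not.

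\textbf{The converse construction fails outright.} Take the $A_n$ disjoint, so the nodes of any chain carry pairwise distinct indices. Let $R\in\mathcal{R}(T)$ have a top $t$, and let $R'\in\mathcal{R}(T)$ contain $t$. Choose graph rays $Q\subseteq R$ and $Q'\subseteq R'\setminus\lceil t\rceil$; these exist by your own argument. Given a finite $X$:
\begin{enumerate}[(a)]
\item pick $s$ on $Q$ whose tail avoids $X$;
\item pick $u$ on the tail of $Q'$ beyond $X$ with index larger than that of $s$; this is possible because the infinitely many vertices of that tail have pairwise distinct indices.
\end{enumerate}
Since $s<t<u$, your rule makes $us$ an edge of $G_T$. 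So the tails of $Q$ and $Q'$ lie in one component of $G_T-X$, and $R$ and $R'$ induce the same end. Yet $[s,\{t\}]$ separates them in $\mathcal{R}(T)$.

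For $[s,\{t\}]$ to correspond to a basic open set of ends, deleting $t$ together with finitely many further vertices must cut everything living above $t$ off from a tail of $R$. Your edge rule does the opposite: vertices high above a top are joined directly to nodes of $R$ far below it, so a top never acts as a separator.

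\textbf{The forward direction carries the real content but is left as an intention.} You write ``I would follow that idea'' and ``my first attempt''. The order on pairs $(X,C)$ is never actually defined, and both concrete devices you offer fail.
\begin{enumerate}[(a)]
\item \emph{Labels.} Labels $|X|\in\mathbb{N}$ strictly increasing along chains force every chain to embed in $\mathbb{N}$. Then no node has infinitely many predecessors, so $T$ has height at most $\omega$ and no ray has a top. In that case $\mathcal{R}(T)$ is metrizable, e.g.\ via $d(R,R')=2^{-|R\cap R'|}$. But the end space of the graph in Example~\ref{ex} is the one-point compactification of a discrete set of size $\aleph_1$, which is not even first countable. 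Your fallback, that each label class be a countable union of antichains, just restates specialness. Rational labels strictly increasing along chains would be an honest certificate, but nothing in your construction produces them.
\item \emph{Surjectivity.} The K\"onig-type step is false as stated. In the star $K_{1,\aleph_0}$ with centre $v$ and leaves $\ell_0,\ell_1,\dots$, the components of $G-\{\ell_0,\dots,\ell_n\}$ containing $v$ strictly decrease, yet the graph has no ends at all.
\end{enumerate}
So a ray of your $T$ corresponds to an end only if the construction itself forces the nodes along every ray of $T$ to carry a graph ray. That is what a normal-tree-type construction is designed to provide, and it is exactly the missing core of your sketch.
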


From this representation theorem, it is possible to derive several fundamental properties of the end space based on the ray space of a special tree. The most significant are topological properties concerning a subbase of closed and open sets (clopens): being $\sigma$-disjoint, nested, and Noetherian. When a clopen subbase possesses all three of these properties and contains the entire space as an element, it is called a \textbf{special subbase}. By combining these subbase properties with an additional completeness condition — namely, that $\mathcal{C}$ is \textit{hereditarily complete} — Pitz \cite{pitz} solved Diestel's question with the following topological characterization of ray spaces of special trees:

\begin{thm}[Pitz \cite{pitz}]
    The following are equivalent for a Hausdorff space X:
    \begin{enumerate}
        \item $X$ is homeomorphic to the ray space of a special tree;
        \item $X$ admits a hereditarily complete, nested, clopen subbase that is noetherian and $\sigma$-disjoint.
    \end{enumerate}
\end{thm}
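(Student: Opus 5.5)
The plan is to prove both implications directly. The tree in (1) will correspond to the subbase in (2) via ``elements of the tree $\leftrightarrow$ sets of rays through them''. I use the standard meanings of the terms. $\mathcal{C}$ is \emph{nested} if any two members are either disjoint or $\subseteq$-comparable. It is \emph{noetherian} if it has no infinite strictly $\subseteq$-increasing chain. It is \emph{$\sigma$-disjoint} if it is a countable union of families of pairwise disjoint sets. \emph{Completeness} asks that every centred subfamily (every subfamily with the finite intersection property) has nonempty intersection. \emph{Hereditary completeness} asks the same after deleting complements of members, i.e.\ for centred families of members and complements of members.

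\textbf{(1)$\Rightarrow$(2).} Let $T$ be special. First prune $T$: delete every $t$ lying on no ray of $\mathcal{R}(T)$, meaning no ray contains $t$. This does not change $\mathcal{R}(T)$ or its topology, since a deleted $t$ only ever appears in some $F$ of a basic set $[t',F]$, where it excludes nothing. For $t\in T$ put $[t]=\{r\in\mathcal{R}(T): t\in r\}$ and $\mathcal{C}=\{[t]:t\in T\}\cup\{\mathcal{R}(T)\}$. I will check the required properties in turn.
\begin{itemize}
\item Each $[t]$ is open. It is also closed. If $t\notin r$, then either $r$ contains some $s$ incomparable with $t$, and $[s]$ separates; or $t$ is a top of $r$, and $[s,\{t\}]$ separates for any $s\in r$.
\item $\mathcal{C}$ is nested because $T$ is a tree.
\item $\mathcal{C}$ is noetherian because $[t]\subsetneq[s]$ forces $s<t$, and $\lceil t\rceil$ is well-ordered.
\item Writing $T=\bigcup_n A_n$ with each $A_n$ an antichain, each $\{[t]:t\in A_n\}$ is pairwise disjoint, so $\mathcal{C}$ is $\sigma$-disjoint.
\item $\mathcal{C}$ is a subbase, since $[t,F]=[t]\setminus\bigcup_{f\in F}[f]$.
\item For hereditary completeness, take a centred family of sets $[t_i]$ and complements $\mathcal{R}(T)\setminus[s_j]$. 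The $t_i$ form a chain; let $D$ be its down-closure. If $D$ has no maximum, $D$ is a ray containing all $t_i$; otherwise extend the maximum to a ray using pruning. I still have to choose the ray so that it avoids every $s_j$. This uses the fact that each $s_j$ is incomparable with or above the chain, together with the centredness at finite stages. That is a routine but careful case check.
\end{itemize}

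\textbf{(2)$\Rightarrow$(1).} Let $\mathcal{C}$ be as in (2). Set $T=\mathcal{C}\setminus\{X,\emptyset\}$, ordered by reverse inclusion. By nestedness, the predecessors of $C$ are exactly the members containing $C$, and these form a chain. By noetherianity this chain is well-ordered, so $T$ is an order tree. Disjoint nonempty sets are incomparable, so $\sigma$-disjointness makes $T$ special. For $x\in X$ let $\phi(x)=\{C\in T: x\in C\}$, which is a chain and is down-closed in $T$.

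The first issue is that $\phi(x)$ may have a maximal element, namely a minimal member containing $x$. To handle this, I will modify $T$ by attaching above each such minimal $C$ a copy of $\omega$ for each point $x\in C$ with that property. Alternatively, I could replace $\mathcal{C}$ by a suitable refinement. In either case each $\phi(x)$ then becomes a ray, and specialness is preserved because only countably many new antichain levels are added.

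Injectivity of $\phi$ follows from Hausdorffness: distinct points are separated by a subbasic clopen set or its complement. Continuity and openness come from the identities $\phi^{-1}([C])=C$ and $\phi^{-1}([C,F])=C\setminus\bigcup F$, together with the fact that the sets $C\setminus\bigcup F$ form a base. This last fact uses nestedness to reduce finite intersections of subbasic sets and complements to this form.

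\textbf{Main obstacle.} The step I expect to be hardest is surjectivity: every ray $r$ of $T$ must equal $\phi(x)$ for some $x$. The ray $r$ is a centred chain, so completeness already yields some $x\in\bigcap r$. However, $\phi(x)$ might strictly extend $r$ by going through one of the tops of $r$. To prevent this, I will apply hereditary completeness to the family consisting of $r$ together with the complements of the tops of $r$. A top $C$ of $r$ is a member properly contained in each element of $r$, so this family is centred by nestedness. Hence it has a point $x$ in its intersection, and then $\phi(x)=r$ exactly. Finally I must verify that the neighbourhood structure of the ray space at limit rays, governed by finitely many tops, matches the topology of $X$ at $x$. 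I will carry this out by again using that the sets $C\setminus\bigcup F$ form a base.
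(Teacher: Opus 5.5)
There are two genuine gaps. The first is in your definition. The paper, following Pitz, calls $\mathcal{C}$ hereditarily complete if $\{C\cap Y : C\in\mathcal{C}\}$ is complete for every \emph{closed} $Y\subseteq X$. Your version requires every centred subfamily of $\mathcal{C}\cup\{X\setminus C : C\in\mathcal{C}\}$ to have nonempty intersection. Since that family is a subbase closed under complements, Alexander's subbase lemma makes your condition equivalent to compactness of $X$.

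So the ``routine case check'' you postpone in (1)$\Rightarrow$(2) cannot be carried out. Take $T$ to be a root with countably many successors $c_n$, each carrying an $\omega$-chain. Then $\mathcal{R}(T)$ is countable discrete, and $\{\mathcal{R}(T)\setminus[c_n] : n\in\mathbb{N}\}$ is centred with empty intersection.

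With the correct definition, the check is the one in Theorem \ref{completude}:
\begin{itemize}
\item A centred family $\{[t_i]\cap Y\}$ forces the $t_i$ to form a chain.
\item If its down-closure is a ray $r\notin Y$, choose a basic $[t,F]\ni r$ disjoint from $Y$. Then $\emptyset\neq[t_i]\cap Y\subseteq\bigcup_{s\in F}[s]\cap Y$ for large $i$.
\item Any ray in such an $[s]\cap Y$ contains every $t_i$.
\end{itemize}
Your surjectivity step in (2)$\Rightarrow$(1) does survive the correction. Apply the correct definition to the closed set $Y=X\setminus\bigcup\{D : D \text{ a top of } r\}$. This $Y$ meets every $C\in r$, because some $C'\subsetneq C$ in $r$ already contains all the tops.

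The second, more serious gap is your treatment of points $x$ that have a least member $C\in\mathcal{C}$: it produces the wrong topology. A neighbourhood base of such an $x$ in $X$ is $\{C\setminus\bigcup F\}$, with $F$ ranging over finite sets of maximal members of $\mathcal{C}$ properly inside $C$. These members are pairwise disjoint, and by Hausdorffness they cover $C\setminus\{x\}$.

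If you attach an $\omega$-chain \emph{above} $C$, the resulting ray has no tops. Its first new node $t$ satisfies $[t]=\{\phi'(x)\}$, so $\phi'(x)$ is isolated. But $x$ is isolated only when $C$ has finitely many maximal proper sub-members.

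There is a further problem: because you delete $X$ from $T$, a point lying in no member other than $X$ has $\phi(x)=\emptyset$ and is not represented at all. For instance, $X=\omega+1$ with $\mathcal{C}=\{X\}\cup\{\{n\}:n<\omega\}$ satisfies (2), yet your construction yields a countable discrete ray space.

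The missing idea is to insert the $\omega$-chain of copies $C_0<C_1<\cdots$ of $C$ \emph{between} $C$ and its maximal proper sub-members, so that those sub-members become the tops of the new ray. You should also keep $X$ as the root, adding it to $\mathcal{C}$ if necessary. This matches the neighbourhood base $C\setminus\bigcup F$. Specialness is preserved by using the antichains $\{C_n : C\in A_k\}$.

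This is exactly the role played by the chains $[U,F_0]\supsetneq[U,F_1]\supsetneq\cdots$ with fixed $U$ in the paper's tree $T_{\mathcal{C}}$ (items \ref{item2}--\ref{item4} and Corollary \ref{mesmabase}). Note that the paper does not prove this theorem itself; it quotes it from Pitz.
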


In this paper, we present a new topological characterization of the end spaces of infinite graphs. In our approach, we retain the requirement of a special subbase; however, we replace the completeness property with a topological game played on the space $X$, called the \textit{end-game}. We define the \textbf{end game} (denoted by $End_{\mathcal{B}}$) over a zero-dimensional topological space $X$ with a fixed basis $\mathcal{B}$ consisting of clopen subsets. Players I and II alternate their moves according to the following rules:
\begin{itemize}
    \item The Player I starts the game by declaring a basic open set $U_0\subset X$ and the Player II answers with an open cover $\mathcal{U}_0$ for $U_0$ consisting of pairwise disjoint basic open sets;
 \item At the $n$-th round of the game, with $n\geq 1$, Player I declares a basic open set $U_n$ that is contained in some $V_{n-1}\in \mathcal{U}_{n-1}$. Then, Player II answers with an open cover $\mathcal{U}_n$ for $U_n$ whose elements are pairwise disjoint basic open sets.
\end{itemize}

The assumption that $X$ is a zero-dimensional space guarantees that Player II has always a possible answer for moves of Player I. After countably many innings, enumerated in $\mathbb{N}$, Player II wins if there are unique $x\in X$ and $A\subset X\setminus \{x\}$ an open set such that $\bigcap_{n\in \mathbb{N}} U_n = \{x\}\cup A$, otherwise Player I wins. If Player II has a (stationary) wining strategy for $End_{\mathcal{B}}$, we write $II\uparrow End_{\mathcal{B}}(X)$. Similarly, $I\uparrow End_{\mathcal{B}}(X)$ means that Player I has a
(stationary) winning strategy. Within these definitions, we prove in the Section \ref{prova2} the following result:

\begin{thm1}
    Let $X$ be a Hausdorff topological space. Then, $\mathrm{II}\uparrow \mathrm{End}_{\mathcal{B}}(X)$ for some special basis $\mathcal{B}$ of $X$ if, and only if, $X$ is the end space of some graph.
\end{thm1}

As an application of our characterization, we obtain several results concerning the topological properties of end spaces. In Section \ref{choquetproperty}, we further explore the end-game and its connection to the famous Banach-Mazur game, which is closely related to the Baire property. From this, we derive results regarding the ray and branch spaces with respect to the Choquet and productively Baire properties. Let $T$ be an order tree. A subset $P \subset T$ is a \textbf{path} if it is a downward closed chain. Let $\mathcal{P}(T)$ be the set of paths, the collection of every path of $T$, we can naturally consider the subspace topology $\mathcal{P}(T) \subset 2^T$. The basic sets here are given by the product $2^T \approx \prod_{t \in T}\{0,1\}$. This is a closed set from a Hausdorff compact, itself a compact space. A \textbf{branch} is a maximal chain (also a path) of $T$. If every branch of the tree has the tree height, we say $T$ is \textbf{pruned}. Both the branch set $\mathcal{B}(T)$ and the ray set $\mathcal{R}(T)$ are naturally subsets of $\mathcal{P}(T)$ and therefore have well defined subspace topology. 

\begin{thm2}
   Ray and Branch spaces are Choquet, hence productively Baire.
\end{thm2}

In Section \ref{subspaces}, we study the problem of determining which subspaces of an end space are themselves end spaces. In \cite{pitz}, Pitz proves that closed subsets of an end space are also end spaces. In \cite{approximatinginfinitegraphsbynormaltrees}, Kurkofka, Melcher, and Pitz prove that open subsets of an end space are end spaces as well. Although some partial answers exist, the problem has not yet been fully resolved.  In Section \ref{subspaces}, we obtain a characterization for the metrizable case in Theorem \ref{metcharacterization}: precisely the $G_\delta$ subsets of an end space are end spaces of some graph. In particular, a Bernstein set of the Cantor space is a Baire subspace of an end space that is not itself an end space. Furthermore, towards contributing to the solution of the problem in the general case, we prove the following result as an application of our topological characterization of end spaces:

\begin{thm3}
Let $A \subset \Omega(G)$ be a $G_{\delta}$ subset, then $A$ is an end-space.
\end{thm3}

In Section \ref{product}, we study the closure of the class of end spaces under topological products. In Subsection \ref{metrizablecaseproduct}, the metrizable case is addressed. By connecting established combinatorial and topological results, we conclude that metrizable end spaces (as well as branch spaces) are closed under countable products. Example \ref{naturalpower} demonstrates that the countability hypothesis is necessary. In Subsection \ref{obstructions}, we examine the general case of products between end spaces and branch spaces. Example \ref{michael} provides a product of branch spaces that is not itself a branch space. Furthermore, as an application of our topological characterization of end spaces, we prove that the product of two end spaces is not always an end space:

\begin{thm4}
    End-spaces are not closed under finite products.
\end{thm4}

\section{End spaces via a topological game} \label{prova2}

\paragraph{}

As pointed out in the introduction, Diestel's question regarding the topological characterization of end spaces was only recently solved by Pitz in \cite{pitz}. In this section, we give a new characterization theorem via the topological game previously mentioned. This approach is reasonable since games codify some ordered trees—the objects that, under the appropriate conditions obtained in \cite{representacao}, represent end spaces faithfully.

We start this discussion by detailing the definition of the end game and explaining its motivation.  As a big picture, Theorem 3 of Kurkofka and Pitz in \cite{representacao} constructs an order tree that displays the topological behavior of the ends in a given graph $G$. 

\begin{thm}\label{t3}
    Let $X$ be a Hausdorff topological space. Then, $\mathrm{II}\uparrow \mathrm{End}_{\mathcal{B}}(X)$ for some special basis $\mathcal{B}$ of $X$ if, and only if, $X$ is the end space of some graph.
\end{thm}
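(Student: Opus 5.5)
By the Kurkofka--Pitz representation theorem, it suffices to show that II has a stationary winning strategy in $\mathrm{End}_{\mathcal B}(X)$ for some special basis $\mathcal B$ if and only if $X$ is homeomorphic to $\mathcal R(T)$ for a special tree $T$. Here a special basis is the family of finite intersections of a special (clopen, nested, noetherian, $\sigma$-disjoint, containing $X$) subbase. I will prove the two implications separately.

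\emph{Ray space $\Rightarrow$ game.} Let $X=\mathcal R(T)$ with $T$ special. Take as subbase the sets $[t]=\{r : t\in r\}$ for $t\in T$, together with $X$ itself. These are clopen. They are nested because any two such sets are either comparable or disjoint according to whether $t,t'$ are comparable. They are $\sigma$-disjoint because $T$ is a countable union of antichains, and noetherian because $T$ is well-founded. The generated basis consists of the sets $[t,F]$. For the strategy, II answers a move $U=[t,F]$ as follows. Let $r_0=\lceil t\rceil$ and choose a successor layer: the cover of $U$ is $\{[s,F_s] : s \text{ an immediate successor of } t\}$, together with the singletons or tops needed to cover rays through $t$ whose next element is a top of a limit chain. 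I need to verify that these pieces are pairwise disjoint basic sets covering $U$; this uses that $t$ belongs to every ray in $U$. Along any play, Player I's sets $U_n\ni$ determine an increasing sequence $t_0<t_1<\dots$, since every $V\in\mathcal U_n$ is witnessed by a strictly larger node. The ray $x=\{s : s\le t_n \text{ for some } n\}$ lies in every $U_n$. Any other point of $\bigcap U_n$ must contain all the $t_n$, so it extends $x$ through a top $\tau$ of $x$ not excluded by the finitely many $F_n$. The set of such points is a union of the open sets $[\tau]$, and hence is open. This shows $\bigcap U_n=\{x\}\cup A$ with $A$ open and $x$ unique, so II wins. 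The bookkeeping for the finite sets $F$ and for limit tops is routine but fiddly.

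\emph{Game $\Rightarrow$ ray space.} Conversely, fix a special basis $\mathcal B$ from a special subbase $\mathcal C$ and a stationary winning strategy $\sigma$ for II. I plan to build a tree $T$ by transfinite recursion on heights, as in the Kurkofka--Pitz construction. Nodes at successor steps are the basic sets produced by $\sigma$ applied to previous nodes, ordered by reverse inclusion. Nodes at limit steps are the nonempty sets $A$ arising as the open part of an intersection along a branch; these are decomposed again, using $\sigma$ on basic subsets. Stationarity of $\sigma$ makes the tree well defined, because a response depends only on the current set. The map sending $x\in X$ to the chain of nodes containing it, truncated appropriately, should give a homeomorphism $X\to\mathcal R(T)$. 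Injectivity follows from the Hausdorff property together with the winning condition: each countable decreasing chain isolates a unique point $x$, and the remaining part is open and is split further. Surjectivity onto rays without maximum follows from the same winning condition. Specialness of $T$ would be inherited from $\sigma$-disjointness of $\mathcal C$, provided nodes can be chosen as elements of $\mathcal C$ or as intersections labelled by them. Pitz's noetherian and nested properties should also keep the height of the tree controlled.

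The main obstacle is this converse direction, specifically proving that $T$ is special and that the map is a homeomorphism at limit stages. The game only produces countable plays, while $T$ may have uncountable height. I would first try to reduce this direction to Pitz's theorem instead of building $T$ directly: from $\sigma$, show that $\mathcal C$ (or a refinement of it) is hereditarily complete. Concretely, a nested filter-like family in $\mathcal C$ should have nonempty intersection, which I would obtain by playing the game along a cofinal countable subsequence (available by noetherianity and nestedness) and invoking the winning condition. Only if that reduction fails would I carry out the direct tree construction.
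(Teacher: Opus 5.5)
There are genuine gaps in both directions.

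\textbf{Ray space $\Rightarrow$ game.} The cover you describe is essentially the successor-cone partition $\{[s,F\cap\lfloor s\rfloor]: s \text{ an immediate successor of } t\}$. The extra ``singletons or tops'' have nothing left to cover, since every ray containing $t$ contains an immediate successor of $t$.

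Your verification then treats the excluded nodes as finitely many. The points of $\bigcap_n U_n$ other than $x$ are the rays through some top $\tau$ of $x$ that avoid $\bigcup_n F_n$. Each $F_n$ is finite, but the union is infinite in general, so these points need not form a union of sets $[\tau]$.

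Against the successor-cone strategy Player~I actually wins. Take the following tree:
\begin{itemize}
\item $t_0<t_1<\cdots$ with $t_{n+1}$ a successor of $t_n$, generating a ray $x$ with a top $\tau$;
\item a chain $\tau<\tau_1<\tau_2<\cdots$;
\item a side successor carrying a ray at every $t_n$ and every $\tau_k$, the one at $\tau_k$ being $f_k$.
\end{itemize}
Player~I plays $U_n=[t_n,\{f_1,\dots,f_n\}]$. Then $x$ and the ray $r^*$ through all $\tau_k$ both lie in $\bigcap_n U_n$. Neither is an interior point: every neighbourhood of $x$ meets the side rays at the $t_n$, and every neighbourhood of $r^*$ meets rays through some $f_k$. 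So no decomposition $\{y\}\cup A$ with $A$ open exists, and II loses.

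Preventing this accumulation is the real content of the paper's Proposition~\ref{estrategia}. For each $m\in F$, the strategy splits off separate pieces rooted at $\hat m$ (the largest limit node $\le m$) and at $\hat m_{i(m)}$ (the first term above $t$ of a fixed cofinal sequence of $\lceil \hat m\rceil\setminus\{\hat m\}$). It also adds these nodes to the excluded sets of the other pieces (types 1--3). This guarantees that no node of $\bigcup_n F_n$ survives strictly above a top of the limit ray. It is the heart of the argument, not routine bookkeeping.

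\textbf{Game $\Rightarrow$ ray space.} Your primary plan is only a sketch, and you leave its hard steps open (specialness of $T$, the homeomorphism at limit stages). Your fallback, showing that $\mathcal C$ itself is hereditarily complete, is false.

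Take $X=\omega$ discrete and $\mathcal C=\{\{k,k+1,\dots\}:k\in\omega\}$. This is a special subbase generating the discrete topology. II wins on the generated basis by always answering with singletons, which are basic. Yet $\mathcal C$ is a centered chain with empty intersection, so it is not complete.

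The same example shows why your mechanism fails. $S_{n+1}$ need not lie inside one piece of $\sigma(S_n)$, so a chain $S_0\supsetneq S_1\supsetneq\cdots$ is not a legal sequence of moves for Player~I. Even along a legal play, the winning condition only gives $\bigcap_n U_n\neq\emptyset$, not that this set meets the closed set $Y$.

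The missing idea is to replace $\mathcal C$ by a subbase adapted to the strategy. The paper takes the node set $\psi(\mathcal C)$ of the tree $T_{\mathcal C}$, built by iterating the winning strategy, refining each answer by the partitions $\mathcal K'$, and restarting at limits from the pieces $\mathcal K[A]$ of the open remainder. Then every chain of $\psi(\mathcal C)$ is literally a play against the strategy. The paper proves:
\begin{itemize}
\item $T_{\mathcal C}$ is special: the particular form of $\mathcal K'$ together with Corollary~\ref{mesmabase} makes each $T_U$ a union of countably many antichains;
\item the sets $[U_n,F_n]\setminus\bigcup\mathcal F$, with $\mathcal F\subseteq\mathcal K[A_x]$ finite, form a local base at $x$ (Proposition~\ref{baselocal}).
\end{itemize}
It is this specific local base that yields $(\{x\}\cup A_x)\cap Y\neq\emptyset$, hence hereditary completeness. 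Pitz's theorem then finishes the proof without constructing a homeomorphism by hand.
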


Our Theorem \ref{t3} approaches the dual direction: for a topological space $X$ with suitable hypothesis over a basis $\mathcal{B}$, we aim to define a special order tree $T$ whose high-ray space is $X$. The nodes of $T$ will be some open basic sets from $\mathcal{B}$, organized throughout $T$ so that a high-ray correspond to a $\subseteq-$decreasing sequence $\{U_n\}_{n\in\mathbb{N}}\subset \mathcal{B}$. In this direction, regarding the basis $\mathcal{B}$, we recall that the \textbf{end game} $\mathrm{End}_{\mathcal{B}}$ between Players $\mathrm{I}$ and $\mathrm{II}$ is described by the following rules:

\begin{itemize}
    \item The Player $\mathrm{I}$ starts the game by declaring a basic open set $U_0\in \mathcal{B}$. Assuming that $X$ is \textit{zero-dimensional}, as it is every end space, Player $\mathrm{II}$ is able to answer with an open cover $\mathcal{U}_0\subset \mathcal{B}$ for $U_0$ composed by pairwise disjoint elements;
    \item At the $n-$th round of the game, with $n\geq 1$, Player $\mathrm{I}$ declares an open set $U_n \in \mathcal{B}$ that is contained in some $V_{n-1}\in \mathcal{U}_{n-1}$. As in the previous item, Player $\mathrm{II}$ answers with a pairwise disjoint family $\mathcal{U}_n$ of basic open sets that cover $U_n$.  
\end{itemize}

Hence, at the end of the game, the moves of Player $\mathrm{I}$ describe a $\subseteq-$decreasing-sequence $\{U_n\}_{n\in\mathbb{N}}$. Therefore, we call Player $\mathrm{II}$ the winner of the game if this choice of moves captures a point $x\in X$ in the following sense: the intersection $\displaystyle \bigcap_{n\in\mathbb{N}}U_n$ can be uniquely written as a disjoint union $\{x\}\cup A$, where $A\subset X$ is an open set not containing $x$. When the end game is played in a topological space $X$ without requiring all open sets to belong to a fixed basis, we denote it by $End(X)$.

Comparing this criteria with the open basic neighborhood in \myref{intrinseco}, $x$ will be further identified with a high-ray $r$ of some high-ray space, so that $A$ will correspond to the open set $\displaystyle\bigcup_{\substack{t\text{ top }\\ \text{of } r}}[t,\emptyset]$.

In our context, a \textbf{strategy} for Player $\mathrm{I}$ is a map $\varphi$ that declares an answer $U_{n+1} = \varphi(\mathcal{U}_0, \mathcal{U}_1,\dots,\mathcal{U}_n)$ for every sequence of moves $\{\mathcal{U}_i\}_{i=0}^n$ made by Player $\mathrm{II}$. Analogously, $\psi$ is a strategy for Player $\mathrm{II}$ if, for every sequence of moves $(U_0,U_1,\dots,U_n)$ made by Player $\mathrm{I}$, the open cover $\mathcal{U}_{n} = \psi(U_0,U_1,\dots,U_n)$ is an answer given by Player $\mathrm{II}$ at the $n-$th round of the game. Moreover, if $\psi$ depends only on the previous move of Player $\mathrm{I}$, we denote $\psi(U_n) = \psi(U_0,U_1,\dots,U_n)$ and call $\psi$ a \textbf{stationary} strategy. In this case, we say that $\psi$ is \textbf{winning} if Player $\mathrm{II}$ wins every match of the form $$\langle U_0,\psi(U_0),U_1,\psi(U_1),U_2,\psi(U_2),\dots\rangle,$$ where $\{U_n\}_{n\in\mathbb{N}}$ is the sequence of moves played by Player $\mathrm{I}$. When such a strategy exists, we denote $\mathrm{II}\uparrow \mathrm{End}_{\mathcal{B}}$. Similar definitions for stationary and winning strategies for Player $\mathrm{I}$ can be settled, although we will not need them in our study. Actually, unless opposite mentions, all strategies in this paper are assumed to be stationary.

Intuitively, the winning criteria for Player $\mathrm{II}$ suggests that a winning strategy for this player is a map that displays a wide range of points in $X$. If $X$ is taken to be the end space of some graph, this motivates the characterization given by Theorem \ref{t3}. Before proving it, we will precise some notations from its statement. Then, for a topological space $X$, we call a clopen subbase $\mathcal{C} = \{U_{\alpha}\}_{\alpha\in \Lambda}$ \textbf{special} if the following properties hold:

\begin{itemize}
    \item $\mathcal{C}$ is $\sigma-$\textbf{disjoint}, in the sense that it can be written as a countable union of \textit{antichains}, i.e., families of pairwise disjoint open sets;
    \item $\mathcal{C}$ is \textbf{nested}, namely, given $\alpha,\beta \in \Lambda$, then either $U_{\alpha}\cap U_{\beta} =\emptyset$, or $U_{\alpha}\subseteq U_{\beta}$, or $U_{\beta}\subseteq U_{\alpha}$;
    \item $\mathcal{C}$ is \textbf{noetherian}, i.e., every $\subseteq-$increasing chain of elements of $\mathcal{C}$ has a $\subseteq-$maximum element.
\end{itemize}

If such a clopen subbase exists, we call $X$ a \textbf{special} topological space. Then, the end space characterization described by Pitz in \cite{pitz} claims that $X$ is the end space of some graph if, and only if, it admits a clopen special subbase that is \textit{hereditarily complete}. In our approach via topological games, this last hypothesis, to be further mentioned, will be replaced by the assumption that Player $\mathrm{II}$ wins when playing the end game in a special basis. Here, a \textbf{special basis} for $X$ means the basis generated by some clopen special subbase. In that direction, the following description is useful:

\begin{lemma}\label{descricao}
    Let $X$ be a topological space and $\mathcal{C} = \{U_{\alpha}\}_{\alpha\in \Lambda}$ be a $\sigma-$disjoint, nested and noetherian collection of clopen sets. Then, $\mathcal{C}$ is a special subbase of $X$ if, and only if, 
    \begin{equation}\label{descricaobase}
        \mathcal{B} = \left\{U_{\alpha}\setminus \bigcup_{\beta\in F}U_{\beta} : U_{\alpha},U_{\beta} \in \mathcal{C}, F\subset \Lambda \text{ finite}\right\}
    \end{equation} is a basis for $X$. In particular, the elements of $\mathcal{B}$ are also clopen sets.
\end{lemma}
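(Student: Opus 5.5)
The proof reduces to two ingredients. The first is the standard description of the basis generated by a subbase. The second is a simplification that nestedness provides: a finite intersection of elements of $\mathcal{C}$ is either empty or equal to its smallest member.

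First step: finite intersections. Take $U_{\alpha_1},\dots,U_{\alpha_n}\in\mathcal{C}$ with nonempty intersection. Any two of them meet, so by nestedness they are pairwise $\subseteq$-comparable. Hence they form a finite chain, and the intersection equals its minimum $U_{\alpha_i}$. The basis generated by $\mathcal{C}$ therefore consists of $X$, the elements of $\mathcal{C}$, and possibly $\emptyset$. (Noetherianity and $\sigma$-disjointness play no role in this lemma.)

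Second step: the direction ($\Leftarrow$). Suppose $\mathcal{B}$ is a basis. Each element of $\mathcal{B}$ can be rewritten as
\[
U_\alpha\setminus\bigcup_{\beta\in F}U_\beta=U_\alpha\cap\bigcap_{\beta\in F}(X\setminus U_\beta).
\]
Every $X\setminus U_\beta$ is clopen, so the topology of $X$ is generated by $\mathcal{C}\cup\{X\setminus U:U\in\mathcal{C}\}$. I would read the paper's notion of "clopen subbase" in this sense, namely that the sets of $\mathcal{C}$ together with their complements form a subbase, which is how Pitz's subbases work. Under that reading $\mathcal{C}$ is a special subbase, once we also have $X\in\mathcal{C}$. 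If $X\notin\mathcal{C}$, first check that the basis property forces $X=\bigcup\mathcal{C}$, and then add $X$ to $\mathcal{C}$. Adding $X$ preserves nestedness, noetherianity (a chain containing $X$ has maximum $X$) and $\sigma$-disjointness (the singleton $\{X\}$ is one extra antichain).

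Third step: the direction ($\Rightarrow$). Suppose $\mathcal{C}$ is a special subbase, so $X\in\mathcal{C}$. The basis generated by $\mathcal{C}$ and the complements consists of finite intersections
\[
U_{\alpha_1}\cap\dots\cap U_{\alpha_n}\cap (X\setminus U_{\beta_1})\cap\dots\cap(X\setminus U_{\beta_m}).
\]
By the first step, the positive part is either empty or a single $U_\alpha$. If there are no positive terms, take $U_\alpha=X$. So every such set is $U_\alpha\setminus\bigcup_{\beta\in F}U_\beta$ with $F$ finite, and it lies in $\mathcal{B}$. Conversely, each element of $\mathcal{B}$ is such an intersection, so $\mathcal{B}$ coincides with this basis. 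The clopen claim is immediate: each element of $\mathcal{B}$ is a finite intersection of the clopen sets $U_\alpha$ and $X\setminus U_\beta$.

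The main obstacle is interpretive rather than technical. I have to pin down exactly what "subbase" means here, namely whether complements are included, and to make sure $X$ is available as the top element. If the paper instead intends $\mathcal{C}$ alone to be a subbase, the same computation still works. In that case one also has to observe that the sets $U_\alpha\setminus\bigcup_F U_\beta$ are open: each is $U_\alpha$ minus finitely many closed sets. Together with the basis property of $\mathcal{B}$, this shows that the topology generated by $\mathcal{C}\cup\{X\setminus U : U\in\mathcal{C}\}$ is the topology of $X$. I would state the reading explicitly at the start of the proof.
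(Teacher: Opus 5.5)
Your argument is correct and is essentially the paper's. For ($\Rightarrow$), nestedness collapses the positive part of a finite intersection from $\mathcal{C}\cup\mathcal{C}^c$ to its $\subseteq$-minimum. For ($\Leftarrow$), each $U_\alpha\setminus\bigcup_{\beta\in F}U_\beta$ is rewritten as such an intersection. Your explicit handling of $X\in\mathcal{C}$ and of an empty positive part is more careful than the paper, which silently skips these cases.

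One caution: drop your closing claim that ``the same computation still works'' if $\mathcal{C}$ alone must be a subbase. The paper explicitly means that $\mathcal{C}\cup\mathcal{C}^c$ is a subbase, and under the other reading the converse is false. For example, take $X=\omega+1$ and $\mathcal{C}=\{X\}\cup\{\{n\}:n\in\omega\}$. Then $\mathcal{B}$ is a basis, but $\mathcal{C}$ alone generates a topology in which $X$ is the only neighborhood of the point $\omega$.
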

\begin{proof} We will denote by $U^c: = X\setminus U$ the complement of a subset $U$ in $X$, also inspiring the notation $\mathcal{C}^c = \{U_{\alpha}^c\}_{{\alpha}\in \Lambda}$. Suppose first that $\mathcal{C}$ is a clopen subbase for $X$, i.e., that $\mathcal{C}\cup \mathcal{C}^c$ is an open subbase. Hence, $\left\{\displaystyle \bigcap_{\alpha \in F_1}U_{\alpha} \cap \bigcap_{\beta\in F_2}U_\beta^c : F_1,F_2\subset \Lambda \text{ finite}\right\}$ is a basis for $X$. However, if $F_1,F_2\subset \Lambda$ are finite and $\displaystyle \bigcap_{\alpha\in F_1}U_{\alpha} \neq \emptyset$, then $\{U_{\alpha}\}_{\alpha \in F_1}$ is totally ordered by $\subseteq$, since $\mathcal{C}$ is nested. Therefore, we can write $\displaystyle\bigcap_{\alpha \in F_1}U_{\alpha} \cap \bigcap_{\beta\in F_2}U_{\beta}^c = U_{\gamma}\cap \bigcap_{\beta\in F_2}U_{\beta}^c = U_{\gamma}\setminus \bigcup_{\beta\in F_2}U_{\beta}, $ where $U_{\gamma}$ is the $\subseteq-$minimum element from $\{U_{\alpha}\}_{\alpha \in F_1}$. Then, $\mathcal{B}$ is also a basis for $X$.

Conversely, suppose that $\mathcal{B}$ is a basis for $X$. For a basic open set $B = \displaystyle U_{\alpha}\setminus\bigcup_{\beta\in F}U_{\beta} \in \mathcal{B}$, we can write $\displaystyle U_{\alpha}\setminus \bigcup_{\beta\in F}U_{\beta} = U_{\alpha}\cap \bigcap_{\beta\in F}U_{\beta}^c$, so that $B$ is a finite intersection of elements from $\mathcal{C}\cup \mathcal{C}^c$. Therefore, $\mathcal{C}$ is a clopen subbase for $X$.

\end{proof}

The basis description provided by \myref{descricaobase} is quite similar to the basic open sets for ray spaces of trees in \myref{intrinseco}. Due to this, an open set of the form $U_{\alpha}\setminus \displaystyle \bigcup_{\beta\in F}U_{\beta}$ will now be denoted by $[U_{\alpha},F]$, for every $\alpha\in \Lambda$ and every finite $F\subset \Lambda$. Relying on this comparison, one can verify that $\mathcal{R}(T)$ is a special topological space for every special tree $T$:

\begin{lemma}\label{52}
    If $T$ is a special tree, then $\mathcal{C} = \{[t,\emptyset]\}_{t\in T}$ is a special clopen subbase for $\mathcal{R}(T)$. In particular, the end space of any graph is a special topological space. 
\end{lemma}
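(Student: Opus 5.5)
We check each defining property of a special clopen subbase directly for $\mathcal{C}=\{[t,\emptyset]\}_{t\in T}$, and then invoke \myref{descricao} for the subbase condition. The final sentence then follows from the theorem of Kurkofka and Pitz: $\Omega(G)$ is homeomorphic to $\mathcal{R}(T)$ for some special tree $T$, and a homeomorphism carries a special clopen subbase to one.

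\textbf{Clopen.} $[t,\emptyset]$ is open by definition. For its complement, take a ray $r$ with $t\notin r$, and split into two cases.
\begin{itemize}
\item If some $s\in r$ is incomparable with $t$, then $[s,\emptyset]$ is a neighbourhood of $r$ disjoint from $[t,\emptyset]$. Indeed, a ray is a chain, so it cannot contain both $s$ and $t$.
\item Otherwise every element of $r$ is comparable with $t$, and since $t\notin r$, every element of $r$ lies below $t$.
  \begin{itemize}
  \item If $t$ is a top of $r$, then $[s,\{t\}]$ with $s\in r$ works.
  \item If $t$ is not a top, then $\lceil t\rceil\setminus\{t\}$ properly contains $r$, since $r$ is downward closed there. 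Because $r$ has no maximum, there is $t'<t$ above all of $r$, and $t'$ is the minimum of such elements by well-ordering. This $t'$ is a top of $r$, and $[s,\{t'\}]$ separates $r$ from $[t,\emptyset]$.
  \end{itemize}
\end{itemize}
Here I am assuming rays are downward closed, as in the Kurkofka–Pitz convention for high rays. If they are not, one works with their down-closures, and the same argument applies.

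\textbf{Nested.} If $t$ and $t'$ are comparable, say $t\le t'$, then every ray containing $t'$ contains $t$ by downward closure, so $[t',\emptyset]\subseteq[t,\emptyset]$. If they are incomparable, no chain contains both, so the two sets are disjoint.

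\textbf{$\sigma$-disjoint.} Write $T=\bigcup_n A_n$ with each $A_n$ an antichain. Then each $\{[t,\emptyset]\}_{t\in A_n}$ consists of pairwise disjoint sets, again because a chain contains no two incomparable elements.

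\textbf{Noetherian.} This is the step needing care, because $t\mapsto[t,\emptyset]$ need not be injective: if $t$ has a unique successor, distinct nodes may give the same set. I would show that for any nonempty $[t,\emptyset]\subseteq[t',\emptyset]$, the nodes $t,t'$ are comparable with $t'\le t$. Take a ray through $t$; it must contain $t'$, so $t,t'$ are comparable. If $t<t'$, then $[t,\emptyset]$ equals $[t',\emptyset]$, otherwise the inclusion fails. Hence a $\subseteq$-increasing chain of nonempty sets $[t_i,\emptyset]$ may be reindexed by representatives forming a $\le$-decreasing chain in $T$. Such a chain is well-ordered downward, so it has a least element, whose set is the maximum of the chain. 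Empty members cause no issue.

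\textbf{Subbase and whole space.} The sets $[t,F]$ with $F$ a finite set of tops form a basis of $\mathcal{R}(T)$ by definition. Each of them equals $[t,\emptyset]\setminus\bigcup_{t'\in F}[t',\emptyset]$, so the family $\mathcal{B}$ of \myref{descricao} contains this basis. Conversely, every element of $\mathcal{B}$ is open, since each $[t,\emptyset]$ is clopen. Thus $\mathcal{B}$ is a basis, and \myref{descricao} gives that $\mathcal{C}$ is a clopen subbase.

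For "contains the whole space": if $T$ has a root, $[\text{root},\emptyset]=\mathcal{R}(T)$. Otherwise we adjoin the whole space to $\mathcal{C}$. This preserves nestedness, adds one antichain $\{\mathcal{R}(T)\}$, and preserves the Noetherian property, since $\mathcal{R}(T)$ becomes the maximum of any chain containing it.

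The main obstacle I expect is the Noetherian property, specifically handling non-injectivity and the degenerate empty sets; everything else is routine.
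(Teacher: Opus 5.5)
Your proof is correct and follows the same route as the paper. You prove clopenness by separating a ray $r$ with $t\notin r$ either by $[s,\emptyset]$ for some $s\in r$ incomparable with $t$, or by $[s,\{t'\}]$ for a top $t'\le t$ of $r$; you then get nestedness, $\sigma$-disjointness and the Noetherian property from comparability of nodes, and conclude with $[t,F]=[t,\emptyset]\setminus\bigcup_{s\in F}[s,\emptyset]$ and Lemma~\ref{descricao}. Your added care fills in points the paper passes over: producing $t'$ as the least element of $(\lceil t\rceil\setminus\{t\})\setminus r$, discarding empty members in the Noetherian step, and adjoining the whole space when $T$ is unrooted. Note, though, that non-injectivity of $t\mapsto[t,\emptyset]$ does not break the paper's argument, since all indices of a $\subseteq$-chain of nonempty members already form a chain in $T$.
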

\begin{proof}
    We will first observe that, for any $t\in T$, the open set $[t,\emptyset]$ is also closed in $\mathcal{R}(T)$. Indeed, if $r\in \mathcal{R}(T)$ is a high-ray that does not contain $t$, then either $t \geq t_0$ for some top $t_0$ of $r$ or $s\in r$ for some node $s\in T$ incomparable with $t$. In the former case, $r\in [x,\{t_0\}]\subset \mathcal{R}(T)\setminus [t,\emptyset]$ for any $x\in r$, while, in the latter, $r\in [s,\emptyset]\subset \mathcal{R}(T)\setminus [t,\emptyset]$. In both case, it is verified that $[t,\emptyset]$ is a closed set of $\mathcal{R}(T)$. In other words, $\mathcal{C}$ is a family of clopen sets.

    For nodes $s,t\in T$, we have the following relations:
    \begin{itemize}
        \item If $s \leq t$, then every high-ray containing $t$ also contains $s$, so that $[t,\emptyset]\subset [s,\emptyset]$;
        \item If $s$ and $t$ are incomparable, then no high-ray containing $s$ contains $t$, so that $[t,\emptyset]\cap [s,\emptyset] = \emptyset$.
    \end{itemize}
    In particular, $\mathcal{C}$ is a nested family. Moreover, if $\mathcal{D} = \{[t,\emptyset]\}_{t\in \Lambda}$ is a subset of $\mathcal{C}$ totally ordered by $\subseteq$, the above items show that $\Lambda \subset T$ is totally ordered as well. Then, there exists $t_0 = \min \Lambda$, so that $[t,\emptyset]\subset [t_0,\emptyset]$ for every $t\in T$. In other words, $\mathcal{D}$ is noetherian.

    Since $T$ is special, we can write $T = \displaystyle \bigcup_{n \in \mathbb{N}}A_n$, where $A_n \subset T$ is an antichain for each $n \in\mathbb{N}$. By the second item above, $\mathcal{A}_n = \{[t,\emptyset]:t\in T\}$ is also an antichain in $\mathcal{R}(T)$. Therefore, $\mathcal{C}$ is $\sigma-$disjoint, since $\mathcal{C} = \displaystyle \bigcup_{n\in\mathbb{N}}\mathcal{A}_n$.

    Finally, given $r\in \mathcal{R}(T)$, a node $t\in r$ and a finite set $F$ of tops of $r$, we have $[t,F] = [t,\emptyset]\setminus \displaystyle \bigcup_{s\in F}[s,\emptyset]$. In particular, the set 
    
    \begin{equation}\label{especial}
        \mathcal{B} = \left\{[t,\emptyset]\setminus \displaystyle \bigcup_{s\in F}[s,\emptyset] : t\in T, F\subset T\text{ finite}\right\}
    \end{equation} is a basis for $\mathcal{R}(T)$. Therefore, $\mathcal{C}$ is a clopen subbase for $\mathcal{R}(T)$ by Lemma \ref{descricao}. 
\end{proof}

Playing on the special basis described by \myref{especial}, we can explicit a winning strategy for Player $\mathrm{II}$:

\begin{prop}\label{estrategia}
    Let $T$ be a special tree. Then, Player $\mathrm{II}$ has a winning (stationary) strategy for the end game in $\mathcal{R}(T)$ played in the special basis generated by $\{[t,\emptyset]\}_{t\in T}$.
\end{prop}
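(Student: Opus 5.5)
The strategy I would use is ``split at the immediate successors''. For every nonempty basic set $U$ of the basis \eqref{especial}, I fix once and for all a representation $U=[t_U,F_U]$. I choose $t_U$ as high as possible, namely as the largest node of the downward closed chain $\{s\in T: U\subseteq [s,\emptyset]\}$ if that chain has a maximum. I also choose $F_U$ so that every $f\in F_U$ satisfies $f>t_U$, and no $[f,\emptyset]$ with $f\in F_U$ is disjoint from $[t_U,\emptyset]$ or redundant. Since the rays are downward closed chains without maximum, every ray $r\in[t_U,\emptyset]$ contains exactly one immediate successor $s$ of $t_U$, namely the minimum of $r\cap\lfloor t_U\rfloor\setminus\{t_U\}$. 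I then define the stationary answer
$$\psi(U)=\bigl\{[s,\{f\in F_U: f\geq s\}] : s \text{ an immediate successor of } t_U,\ s\notin\lfloor f\rfloor \text{ for all } f\in F_U\bigr\}.$$
The sets $[s,\emptyset]$ for distinct successors $s$ are pairwise disjoint because the $s$ are incomparable. Moreover $[s,\emptyset]\cap[t_U,F_U]$ is exactly the displayed set, so $\psi(U)$ is a legal move: a disjoint cover of $U$ by basic sets.

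Now let $U_0,U_1,\dots$ be the moves of Player $\mathrm{I}$, with $U_n\subseteq V_{n-1}=[s_{n-1},\cdot]\in\psi(U_{n-1})$. I write $t_n=t_{U_n}$ and $F_n=F_{U_n}$. Since $U_n\subseteq[s_{n-1},\emptyset]$ and $s_{n-1}>t_{n-1}$, maximality gives $t_n\geq s_{n-1}>t_{n-1}$. So $(t_n)$ is strictly increasing, and its downward closure $r$ is a ray of cofinality $\omega$. Next I check that $r\in\bigcap_n U_n$. Clearly $t_n\in r$ for every $n$. If some $f\in F_n$ belonged to $r$, then $f\leq t_m$ for some $m>n$, giving $\emptyset\neq U_m\subseteq[t_m,\emptyset]\subseteq[f,\emptyset]$, which is disjoint from $U_n\supseteq U_m$; this is a contradiction. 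Conversely, any ray $r'\in\bigcap_n U_n$ contains every $t_n$, hence contains $r$. So either $r'=r$, or $r'$ passes through some top $u$ of $r$. Therefore $\bigcap_nU_n=\{r\}\cup A$ with
$$A=\bigcup_{u \text{ top of } r}\Bigl([u,\emptyset]\cap\bigcap_n U_n\Bigr).$$
Uniqueness of the decomposition is then automatic, because $r$ is not an interior point: every basic neighbourhood $[t_n,G]$ of $r$ with $G$ a finite set of tops still contains rays that avoid all tops of $r$ in $G$. In particular it contains rays through other tops, or $r$ itself; the argument mirrors the clopen computation in Lemma \ref{52}.

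I expect two points to be the main obstacles. The first is the degenerate case in the choice of $t_U$, where the chain $\{s: U\subseteq[s,\emptyset]\}$ has no maximum. There $U$ consists of one ray $r_U$ together with rays through tops of $r_U$, and for such $U$ I would let $\psi(U)$ be the cover $\{[u,\cdot]\cap U: u\text{ a top of } r_U\}$ together with one basic set isolating $r_U$ modulo finitely many tops. The second and harder obstacle is showing that $A$ is open. For a top $u$ of $r$, the set $[u,\emptyset]\cap\bigcap_nU_n$ equals $[u,\emptyset]$ minus $\bigcup_n\bigcup_{f\in F_n,\,f\geq u}[f,\emptyset]$, and a countable union of clopen sets need not be closed. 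I would first try to show, from the canonical choice of $F_U$ and the nesting $U_{n+1}\subseteq V_n$, that only finitely many distinct such $f$ occur above any given top $u$. If that fails, I would modify $\psi$ so that whenever $F_U$ contains nodes above $t_U$, the answer also splits off the pieces $[f',\cdot]$ for the relevant nodes $f'$ below those elements of $F_U$. The aim is that no node can be removed infinitely often above a top of the limiting ray.
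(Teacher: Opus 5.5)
\paragraph{The gap.} The gap is the step you flag as your second obstacle, and it cannot be closed for the strategy you define. With your $\psi$ the set $A$ need not be open, and Player~I wins.

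Take the countable (hence special) tree consisting of:
\begin{itemize}
\item a chain $a_0<a_1<\cdots$;
\item for each $n$, a second successor $b_n$ of $a_n$ carrying a chain $b_n<b_n^1<\cdots$;
\item a top $u$ of $r=\{a_n:n\in\mathbb{N}\}$, followed by a chain $u<e_0<e_1<\cdots$;
\item for each $i$, a second successor $c_i$ of $e_i$ carrying a chain $c_i<c_i^1<\cdots$.
\end{itemize}
Let Player~I play $U_n=[a_n,\{c_0,\dots,c_n\}]$.
\begin{itemize}
\item Since $U_n$ contains $r$ and the ray through $b_n$, you get $t_{U_n}=a_n$.
\item Every admissible $F_{U_n}$ consists of nodes on the chains above $c_0,\dots,c_n$, all lying above $u$. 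Hence $\psi(U_n)=\{[a_{n+1},F_{U_n}],[b_n,\emptyset]\}$, and $U_{n+1}\subseteq[a_{n+1},F_{U_n}]$ is a legal move.
\item Then $\bigcap_nU_n=\{r,r'\}$ with $r'=r\cup\{u\}\cup\{e_j:j\in\mathbb{N}\}$, and neither point is interior. Each neighbourhood $[a_k,G]$ of $r$ (with $G\subseteq\{u\}$) contains the ray through $b_k$. Each neighbourhood $[e_k,\emptyset]$ of $r'$ (which has no tops) contains the ray through $c_k$.
\end{itemize}
So $\bigcap_nU_n$ is not a point plus an open set. This also rules out your first fallback. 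It is Player~I, not Player~II, who chooses the $F_n$, and he can delete a fresh node above the top $u$ in every round. No canonical choice of $F_U$ can keep the deleted nodes above a top finite.

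\paragraph{The missing idea.} Player~II's answer must separate, in advance, the rays that will pass through a top of the limit ray from those that stay on it. The paper builds this from the removed nodes. For $m\in F$, let $\hat m$ be the largest limit node below $m$, and let $\hat m_{i(m)}$ be the first node above $t$ of a fixed cofinal sequence of $\mathring{\lceil\hat m\rceil}$. The answer then has three kinds of pieces:
\begin{itemize}
\item successor pieces (type 1), which now also exclude $\hat m_{i(m)}$;
\item $[\hat m_{i(m)},\Lambda^2_m(F)]$, from which all of $[\hat m,\emptyset]$ is removed (type 2);
\item $[\hat m,\Lambda^3_m(F)]$ (type 3).
\end{itemize}
In the tree above, $\widehat{c_i}=u$, so a Player~I who keeps $r$ alive is pushed into a type 2 piece, and $r'$ is lost.

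In general, the paper takes a node $m\in\bigcup_nF_n$ of minimal height above a surviving top $s$ of $r$. It argues that from some round on all pieces played are of type 2, with $(t_n)$ cofinal in $\mathring{\lceil m\rceil}$. This forces $m=s$, a contradiction. Hence $[s,\emptyset]\subseteq\bigcap_nV_n$, and $A$ is a union of such sets.

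Your second fallback (splitting off pieces at ``relevant nodes below'' elements of $F_U$) points in this direction. But it does not say which nodes to use, or why deletions can then no longer accumulate on a ray through a top. That choice, together with the convergence argument, is the substance of the proof. What you did carry out matches the paper's easy opening steps: increasing $t_n$, $r\in\bigcap_nU_n$, surviving rays passing through tops of $r$, and uniqueness of the decomposition.

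\paragraph{The degenerate case.} This also needs repair. If $r_U$ has infinitely many tops meeting $U$, any basic set containing $r_U$ meets all but finitely many of the pieces $[u,\cdot]\cap U$. So your proposed answer there is not a disjoint cover.
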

\begin{proof}
    For every $t\in T$ we will denote by $\hat{t}$ the node $$\hat{t} = \max \{s\in T : s \text{ is a limit node and }s \leq t\}.$$ In addition, if $t$ is a limit node, we will write $\{t_i\}_{i < \omega}$ for a fixed cofinal sequence in $\mathring{\lceil t\rceil}$. Let $\mathcal{B}$ denote the special basis generated by $\{[t,\emptyset]\}_{t\in T}$, as in the expression \myref{especial}.

    Within this notation, we are ready to explicit a winning (stationary) strategy for Player $\mathrm{II}$ in $\mathrm{End}_{\mathcal{B}}$. To this aim, fix a basic open set $V = [t,F] \in  \mathcal{B}$. By definition of $[t,F]$ in \myref{intrinseco}, we can assume that $t < m$ for every $m\in F$. Then, consider the following three families of open sets:

    \begin{itemize}
        \item For every $m\in F$, define the index $i(m) = \min\{i \in\mathbb{N} : t < \hat{m}_i\}$. For every successor $s$ of $t$, the basic open set $[s, \Lambda_s^1(F)]$ will be referred as a \textit{type $1$} set, where $\Lambda_s^1(F) = \{\hat{m}_{i(m)} : m\in F, \hat{m}_{i(m)}> s\}\cup F$;
        \item For every $m\in F$, the open set $[\hat{m}_{i(m)}, \Lambda_m^2(F)]$ will be referred as a \textit{type $2$} set, where $$\Lambda_m^2(F) = (\{\hat{n}_{i(n)}: n \in F\}\cup \{\hat{n} : n \in F\}\cup F)\cap \{t\in T : t > \hat{m}_{i(m)}\};$$
        \item Finally, a \textit{type $3$} set is an open set of the form $[\hat{m}, \Lambda_m^3(F)]$, where $$\Lambda_m^3(F) = (\{\hat{n}_{i(n)}: n \in F\}\cup F)\cap \{t\in T : t > \hat{m}\}$$ for each $m\in F$.
    \end{itemize}

    Its is easily verified that the family $$\{[s,\Lambda_s^1(F)]: s\text{ is a successor of }t\}\cup \{[\hat{m}_{i(m)},\Lambda_m^2(F)]: m\in F \}\cup \{[\hat{m}, \Lambda_m^3(F)]: m \in F\}$$ covers $V = [t,F]$ with pairwise disjoint open sets. Hence, we will consider this open cover as the answer $\psi(V)$ of Player $\mathrm{II}$ when Player $\mathrm{I}$ declares $V$ in $\mathrm{End}_{\mathcal{B}}$. In order to see that $\psi$ is a winning strategy, let $$\langle V_0, \psi(V_0), V_1,\psi(V_1), V_2,\psi(V_2), \dots \rangle$$ be a match in which Player $\mathrm{II}$ follows the instructions of $\psi$. In particular, for each $n \in\mathbb{N}$, Player $\mathrm{I}$ chooses $V_{n+1}$ as an open set contained in $[t_n, F_n]\in \psi(V_n)$, for some $t_n \in T$ and some finite $F\subset T$. As in Lemma \ref{52}, $\{t_n\}_{n \in\mathbb{N}}$ is a totally ordered subset of $T$, because $[t_{n+1},F_{n+1}]\subset [t_n,F_n]$ for every $n \in\mathbb{N}$. Hence, there is $r$ a high-ray of $T$ in which $\{t_n\}_{n < \omega}$ is a cofinal sequence.

    In order to verify that Player $\mathrm{II}$ wins the match, we will prove that $\displaystyle \bigcap_{n\in\mathbb{N}}V_n$ can be written uniquely as a union of a high-ray and an open set not containing it. To that aim, we first observe that $\displaystyle \bigcap_{n\in\mathbb{N}}V_n = \displaystyle \bigcap_{n \in\mathbb{N}}[t_n,F_n]$, because $V_{n+1}\subset [t_{n},F_{n}]\subset V_n$ for every $n \in\mathbb{N}$. Moreover, there is no open set containing $r$ that is contained in $\displaystyle \bigcap_{n \in\mathbb{N}}[t_n,F_n]$, due to the fact that $\{t_n\}_{n \in\mathbb{N}}$ is cofinal in $r$. Hence, it is enough to prove that $ \displaystyle \bigcap_{n \in\mathbb{N}}[t_n,F_n]\setminus \{r\}$ is an open set in $\mathcal{R}(T)$.

    Indeed, fix a high-ray $r'\in \displaystyle \bigcap_{n \in\mathbb{N}}[t_n,F_n]\setminus \{r\}$. Note that $r'$ strictly contains $r$, once more because $\{t_n\}_{n \in\mathbb{N}}$ is cofinal in $r$. If $s$ denotes the top of $r$ belonging to $r'$, we finish the proof by concluding that there is no $m > s$ with $m\in \displaystyle \bigcup_{n\in\mathbb{N}}F_n$. In particular, we must have $[s,\emptyset]\subset \displaystyle \bigcap_{n \in\mathbb{N}}[t_n,F_n]$. For a contradiction, suppose that there is such a node $m > s$. If we choose $m$ with minimum height satisfying that property, we must have $m\in F_n$ for every $n$ bigger than some fixed $n_0 \in\mathbb{N}$. In particular, for every $n \geq n_0$, the open set $[t_n,F_n]$ is not of type $1$. Neither it is of type $3$: otherwise, $\hat{t_n} = \hat{m}$, so that the chain $\{t\in T : t_n \leq t \leq m\}$ is finite, contradicting the fact that $m > s$. Hence, the open set $[t_n,F_n]$ is of type $2$ for every $n \geq n_0$. In particular, by the minimality of $m$, we must have $ m = \hat{m}$ and the sequence $\{t_n\}_{n \geq n_0}$ must be cofinal in $\mathring{\lceil m \rceil}$. Therefore, $m = s$, contradicting the choice of $m$.   
 \end{proof}

 From now on in this section, our aim is to prove a converse statement for Proposition~\ref{estrategia}. In other words, if $X$ is a Hausdorff topological space in which $\mathrm{II}\uparrow \mathrm{End}_{\mathcal{B}}$ for some special basis $\mathcal{B}$, we will conclude that $X$ is the high-ray space of some special order tree. To that aim, we will study suitable moves for Player $\mathrm{II}$ against a winning strategy for Player $\mathrm{II}$.

 Before that, we will explicit a convenient method to partition a basic open set from a basis $\mathcal{B}$ generated by a special clopen subbase $\mathcal{C} = \{U_{\alpha}\}_{\alpha\in \Lambda}$. Considering the expression \myref{descricaobase}, this open set is written as $[U,F] = \displaystyle U\setminus \bigcup_{\alpha\in F}U_{\alpha}$, for some $U\in \mathcal{C}$ and some finite $F\subset \Lambda$. In particular, we can always assume that $U_{\alpha}\cap U_{\beta} = \emptyset$ for every pair $\alpha,\beta\in F$, because $\mathcal{C}$ is a nested family. By the same reason, $U_{\alpha}$ is a proper subset of $U$ for every $\alpha \in F$, unless $[U,F]=\emptyset$. When comparing basic open sets, the following observation is often implicitly applied:

 \begin{lemma}\label{decrescente}
     Let $[U,F], [V,G]\in \mathcal{B}$ be basic open sets such that $[V,G]\subset [U,F]$. Then, $[V,G] = [W,K]$ for some subbasic clopen set $W\subseteq U$ and some finite $K\subset \Lambda$.
 \end{lemma}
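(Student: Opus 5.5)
The proof is a case analysis on how $V$ sits relative to $U$. Both are members of the nested family $\mathcal{C}$. We may assume $[V,G]\neq\emptyset$; if it is empty, take $W=U$ and $K=\{\alpha\}$ for any index $\alpha$ with $U_\alpha=U$. Since $[V,G]\subseteq V$ and $[V,G]\subseteq[U,F]\subseteq U$, the set $V\cap U$ is nonempty. Nestedness then leaves only two possibilities: $V\subseteq U$ or $U\subsetneq V$.

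In the first case we simply take $W=V$ and $K=G$, and there is nothing more to check.

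In the second case, $U\subsetneq V$, we propose $W=U$ and $K=G\cup F$, and we must show $[V,G]=[U,G\cup F]$.

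For the inclusion $\supseteq$: since $U\subseteq V$, we have $U\setminus\bigcup_{\beta\in G\cup F}U_\beta\subseteq V\setminus\bigcup_{\beta\in G}U_\beta$.

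For the inclusion $\subseteq$: take $x\in[V,G]$. Because $[V,G]\subseteq[U,F]$, the point $x$ lies in $U$ and outside every $U_\beta$ with $\beta\in F$. Because $x\in[V,G]$, it also lies outside every $U_\beta$ with $\beta\in G$. Hence $x\in[U,G\cup F]$.

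So in both cases $[V,G]=[W,K]$ with $W\subseteq U$ a member of $\mathcal{C}$ and $K$ finite, which is the claim.

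There is no real obstacle here. The only point needing care is the use of nestedness: it requires $U\cap V\neq\emptyset$, and this is exactly why the empty case is handled separately at the start. If the normalization stated just before the lemma is wanted afterwards (the $U_\beta$, $\beta\in K$, pairwise disjoint and each a proper subset of $W$), one can then discard from $K$ every index with $U_\beta\cap W=\emptyset$ and keep only the $\subseteq$-maximal ones among the rest. This is routine and not needed for the statement itself.
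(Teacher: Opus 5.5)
Your proposal is correct and follows essentially the same argument as the paper: nestedness reduces to $V\subseteq U$ (immediate) or $U\subseteq V$, and in the latter case you show $[V,G]=[U,F\cup G]$ by the same two inclusions. Your explicit treatment of the empty case, via $[U,\{\alpha\}]=\emptyset$ with $U_\alpha=U$, fills in a detail the paper only mentions in passing.
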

 \begin{proof}
     Since $[V,G]\subset [U,F]$, we have $V\cap U \neq \emptyset$, unless $[V,G] = \emptyset$. If $V\subseteq U$, the result is immediate. If not, then $U\subseteq V$, because $\mathcal{C}$ is a nested family. In this case, we claim that $[V,G] = [U,F\cup G]$, finishing the proof. In fact, if $x \in [V,G]$, then $x \in [U,F]$ and $x \notin U_{\alpha}$ for any $\alpha \in G$. Hence, $x \in [U,F\cup G]$. Conversely, if $x\in [U,F\cup G]$, then, in particular, $x \notin U_{\alpha}$ for every $x\in V$ and any $\alpha \in G$, because $U\subseteq V$. In other words, $x\in [V,G]$. 
 \end{proof}

 On the other hand, for each $U\in \mathcal{C}$ that is a proper clopen set of $X$, the family $\mathring{\lceil U\rceil} := \{V\in \mathcal{C}: U\subsetneq V\subsetneq X\}$ is either empty or well ordered by $\supseteq$, since $\mathcal{C}$ is nested and noetherian. In the latter case, $\mathring{\lceil U\rceil}$ is countable, since $\mathcal{C}$ is $\sigma-$disjoint. Then, we will denote by $\rho(U) \subset \mathring{\lceil U\rceil}$ a fixed cofinal sequence in $\mathring{\lceil U\rceil}$, for some $\kappa_u \in \omega+1$. In addition, the following observation is useful:

 \begin{lemma}\label{doiscasos}
     For a basic open set $[U,F]\in \mathcal{B}$, precisely one of the items below holds:
     \begin{itemize}
         \item[$i)$] Either, for every $x \in [U,F]$, there is a clopen set $U_x \in \mathcal{C}$ such that $x\in U_x\subsetneq U$;
         \item[$ii)$] Or there is a unique $x \in [U,F]$ such that $U$ is the smallest clopen set from $\mathcal{C}$ containing $x$, namely, there is no $V\in \mathcal{C}$ such that $x\in V\subsetneq U$.  
     \end{itemize}
 \end{lemma}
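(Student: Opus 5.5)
Assume (i) fails and show (ii); then show (i) and (ii) cannot both hold. The two conditions together give uniqueness in (ii).

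\textbf{Step 1: (ii) excludes (i).} If (ii) holds with witness $x$, then $x\in[U,F]$ and no $V\in\mathcal{C}$ satisfies $x\in V\subsetneq U$. So (i) fails at the point $x$, and the two items are mutually exclusive.

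\textbf{Step 2: uniqueness in (ii), which is the key step.} Suppose $x\neq y$ both lie in $[U,F]$ and $U$ is the smallest member of $\mathcal{C}$ containing each of them.
\begin{itemize}
\item Since $X$ is Hausdorff and $\mathcal{C}$ is a clopen subbase, Lemma \ref{descricao} gives a basic set $[W,K]$ with $x\in[W,K]$ and $y\notin[W,K]$.
\item Because $x\in W\cap U$ and $\mathcal{C}$ is nested, either $W\subsetneq U$ or $U\subseteq W$. The first option contradicts the minimality of $U$ for $x$, so $U\subseteq W$, and hence $y\in W$.
\item Since $y\notin[W,K]$, we get $y\in U_\beta$ for some $\beta\in K$, while $x\notin U_\beta$.
\item As $y\in U_\beta\cap U$, nestedness gives $U_\beta\subsetneq U$ or $U\subseteq U_\beta$.
\item The first option contradicts the minimality of $U$ for $y$. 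The second gives $x\in U\subseteq U_\beta$, contradicting $x\notin U_\beta$.
\end{itemize}
So at most one point of $[U,F]$ can have $U$ as its minimal subbasic neighbourhood. I expect this to be the main step. It uses Hausdorffness, which is in force for the main theorem. If only $T_0$ is available, the same argument still works by separating in whichever direction the topology allows, since the roles of $x$ and $y$ are symmetric.

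\textbf{Step 3: not (i) implies (ii).} If (i) fails, some $x\in[U,F]$ has no $U_x\in\mathcal{C}$ with $x\in U_x\subsetneq U$. This is exactly the defining property in (ii), and Step 2 makes this $x$ unique. Note that $x\in U$ automatically, since $[U,F]\subseteq U$.

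Combining the three steps, precisely one of (i) and (ii) holds. The noetherian and $\sigma$-disjoint hypotheses are not needed here; only nestedness and the subbase property are used.
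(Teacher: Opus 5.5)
Your proof is correct and is essentially the paper's argument with the roles of $x$ and $y$ swapped. The paper takes disjoint basic neighbourhoods $[U_x,F_x]\ni x$ and $[U_y,F_y]\ni y$, uses minimality of $U$ to get $U\subseteq U_y$, finds $i\in F_y$ with $x\in U_i$, and then $U\subseteq U_i$ contradicts $y\in[U_y,F_y]$. Your version only needs a one-sided separation, so $T_0$ suffices as you note, and you also spell out the mutual exclusivity and the implication ``not (i) implies (ii)'', which the paper leaves implicit.
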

 \begin{proof}
     Assume that there are distinct $x,y\in [U,F]$ such that $U$ is the smallest clopen set from $\mathcal{C}$ containing them. Since $X$ is a Hausdorff topological space, there are $U_x,U_y \in \mathcal{U}$ and finite sets $F_x,F_y\subset \Lambda$ such that $x\in [U_x,F_x]\subset [U,F]$ and $y\in [U_y,F_y]\subset [U,F]$, but $[U_x,F_x]\cap [U_y,F_y] = \emptyset$. By the minimality of $U$, we must have $U\subseteq U_x$ and $U\subseteq U_y$, so that $x,y\in U\subseteq U_x \cap U_y$. We then can assume that $U_x\subseteq U_y$, because $\mathcal{C}$ is a nested family. Therefore, there is $i\in F_y$ such that $x \in U_i$, since $x \in U_y$ but $[U_x,F_x]\cap [U_y,F_y] = \emptyset$. Hence, $U\subseteq U_i$, again by the minimality of $U$. This, however, contradicts the fact that $y\in [U_y,F_y] = \displaystyle U_y \setminus \bigcup_{j\in F_y}U_j$.
 \end{proof}

Based on the cases displayed by Lemma \ref{doiscasos}, precisely one of the four items below describe a partition for $[U,F]$ into basic open sets from $\mathcal{B}$. For further reference, such an open cover will be denoted by $\mathcal{K}'[U,F]$. In addition, although a basic open set $V\in \mathcal{K}'[U,F]$ might admit more than one representation of the form $V = [U',F']$, the following procedure will \textit{fix} a clopen set $U'\in \mathcal{C}$ and a finite $F'\subset \Lambda$ such that $V=[U',F']$. Moreover, this choice will be done so that $U'\subseteq U$:

\begin{enumerate}
    \item \label{item1} Assume first that item $i)$ from the above lemma is verified. Fix, for every $x\in [U,F]$, a clopen set $U_x\in \mathcal{C}$ such that $x\in U_x\subsetneq U$. If, for some $A\subset [U,F]$, the subfamily $\{U_x: x \in A\}$ is a chain regarding $\subseteq$, there is $\tilde{x}\in A$ such that $U_{\tilde{x}}$ is a maximum element in  $\{U_x: x \in A\}$, because $\mathcal{C}$ is noetherian. Hence, by Zorn's Lemma, the set $$\tilde{A} = \{\tilde{x}\in [U,F]: U_{\tilde{x}} \text{ is }\subseteq-\text{maximal in }\{U_x\}_{x\in [U,F]}\}$$ is well-defined. Actually, this argument shows that, for every $x \in [U,F]$, there is $\tilde{x}\in \tilde{A}$ so that $x \in U_{\tilde{x}}$. Moreover, given $\tilde{x},\tilde{y} \in \tilde{A}$, then $U_{\tilde{x}}\cap U_{\tilde{y}} \neq \emptyset$ or $U_{\tilde{x}} = U_{\tilde{y}}$ by the $\subseteq-$maximality of both $U_{\tilde{x}}$ and $U_{\tilde{y}}$, since $\mathcal{C}$ is nested. Hence, $\{U_{\tilde{x}}\}_{\tilde{x}\in \tilde{A}}$ is a disjoint family. In particular, for every $\alpha\in F$, there is a unique $\tilde{x}_{\alpha}\in \tilde{A}$ such that $U_{\alpha}\subset U_{\tilde{x}_{\alpha}}$. Then, let $U_{\alpha}'$ be the $\subseteq-$maximum set from $\{V \in \rho(U_{\alpha}) : U_{\alpha}\subseteq V \subsetneq U_{\tilde{x}_{\alpha}}\}$. Within this notation, $$\mathcal{K}'[U,F] = \left\{U_{\tilde{x}}\setminus \bigcup_{\alpha\in F}U_{\alpha}' : \tilde{x}\in \tilde{A}\right\}\cup\left\{U_{\alpha}'\setminus  \bigcup_{\substack{\beta\in F \\ U_{\alpha}'\not\subset U_{\beta}'}}U_{\beta}': \alpha \in F\right\}$$ is a disjoint open cover for $[U,F]$ whose elements belong to the basis $\mathcal{B}$; 
    \item \label{item2} For this and the next two cases, assume that item $ii)$ of Lemma \ref{doiscasos} holds. Moreover, for each $\alpha \in F$, denote by $U_{\alpha}'$ the $\subseteq-$maximum set from $\{W \in \rho(U_{\alpha}) : U_{\alpha}\subsetneq W \subsetneq U\}$, if it exists. Otherwise, denote $U_{\alpha}' = U_{\alpha}$. Define the set $F' = \{\alpha\in F : U_{\alpha}'\neq U_{\alpha}\}$. If $F'\neq \emptyset$, then $$\mathcal{K}'[U,F] = \left\{U\setminus \bigcup_{\alpha\in F}U_{\alpha}'\right\}\cup \left\{U_{\alpha}'\setminus  \bigcup_{\substack{\beta\in F \\ U_{\alpha}'\not\subset U_{\beta}'}}U_{\beta}' : i \in F'\right\}$$
    is a disjoint open cover for $[U,F]$ whose elements belong to the basis $\mathcal{B}$; 
    \item \label{item3} Following the notation from the above item, suppose now that $F' = \emptyset$ and that $[U,F] = \{x\}$ is singleton. In this case, we set the trivial open cover $\mathcal{K}'[U,F] = \{[U,F]\} = \{\{x\}\}$;
    \item \label{item4} Finally, let $x$ be the unique point from $[U,F]$ such that $U$ is the smallest clopen set of $\mathcal{C}$ containing $x$. Following the notation from the previous items, suppose that $F' = \emptyset$ and fix $y \in [U,F]\setminus \{x\}$. Then, there is $U_y\subsetneq U$ a subbasic set containing $y$. Since $y\notin U_{\alpha}$ for any $\alpha\in F$, it follows that $U_y \cap U_{\alpha} = \emptyset$, because $F' = \emptyset$ and $\rho(U_{\alpha})$ is a cofinal sequence in $\mathring{\lceil U_{\alpha}\rceil }$. In this case, $$\mathcal{K}'[U,F] = \left\{U_y,[U,F]\setminus U_y\right\}$$ is the claimed open cover for $[U,F]$.
\end{enumerate}

Regarding these definitions, we can study convenient matches between Player $\mathrm{I}$ and a fixed winning strategy $\psi$ for Player $\mathrm{II}$. First, for a basic open set $[U,F]\in \mathcal{B}$, we remark that any other element from $[V,G]\in \psi([U,F])$ is contained in $[U,F]$. Hence, relying on Lemma~\ref{decrescente}, we can assume that $V\subseteq U$.

Now, suppose that Player $\mathrm{I}$ starts the game $\mathrm{End}_{\mathcal{B}}$ by declaring a basic open set $[U_0,F_0]$. At the $n-$th round of the game, suppose that Player $\mathrm{II}$ declared a disjoint open cover $\mathcal{U}_n\subset \mathcal{B}$ for the basic open set $[U_n,F_n]$ just played by Player $\mathrm{I}$. We write $\psi([U_n,F_n]) = \mathcal{U}_n$, assuming that the answer of Player $\mathrm{II}$ follows the given winning strategy. Then, we consider the case in which Player $\mathrm{I}$ starts the $(n+1)-$th round by choosing any basic open set $[U_{n+1},F_{n+1}]\in \displaystyle \bigcup_{[U,F]\in \mathcal{u}_n}\mathcal{K}'[U,F]$. In particular, $[U_{n+1},F_{n+1}]\in \mathcal{K}'[\tilde{U_n},\tilde{F_n}]$ for a unique basic open set $[\tilde{V_n},\tilde{F_n}]\in \mathcal{U}_n$.  Therefore, we described a match $$\langle [U_0,F_0],\mathcal{U}_0, [U_1,F_1], \mathcal{U}_1,[U_2,F_2], \mathcal{U}_2,\dots \rangle.$$ Since $\psi$ is a winning strategy for Player $\mathrm{II}$, we therefore can uniquely write $$\bigcap_{n\in\mathbb{N}}[U_{n},F_{n}] = \{x\} \cup A,$$ where $A\subset X$ is open and $x \in X\setminus A$. Moreover, $[\tilde{U_{n+1}},\tilde{F_{n+1}}]\subset [U_{n+1},F_{n+1}]\subset [\tilde{U_{n}},\tilde{F_n}]$ for every $n \in\mathbb{N}$. Hence, although $A$ might not be a basic open set from $\mathcal{B}$, we will ensure below that $\{A\} \cup \mathcal{C}$ is a nested family. As a consequence, one can show that $A$ is a basic open set or a disjoint union of subbasic sets:

\begin{prop}\label{tricotomiaA}
    Given $U\in \mathcal{C}$, then $A\cap U = \emptyset$, $U\subseteq A$ or $A\subseteq U$.
\end{prop}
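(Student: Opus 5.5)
The plan is to compare $U$ with the decreasing chain of subbasic sets $U_0\supseteq U_1\supseteq U_2\supseteq\cdots$ that Player $\mathrm{I}$ plays along the match. By Lemma~\ref{decrescente} and the convention fixed in the construction of $\mathcal{K}'$, we may assume $U_{n+1}\subseteq \tilde{U}_n\subseteq U_n$ for every $n$. Since $\mathcal{C}$ is nested, for each $n$ exactly one of three things happens: $U\cap U_n=\emptyset$, $U_n\subseteq U$, or $U\subsetneq U_n$. This gives three cases, and the first two are immediate. If $U\cap U_n=\emptyset$ for some $n$, then $A\subseteq \bigcap_k[U_k,F_k]\subseteq U_n$, so $A\cap U=\emptyset$. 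If $U_n\subseteq U$ for some $n$, then $A\subseteq U_n\subseteq U$.

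The remaining case is $U\subsetneq U_n$ for every $n$. Here I will show that either $U\cap\bigcap_n[U_n,F_n]=\emptyset$, or $U\subseteq \bigcap_n[U_n,F_n]$ and $x\notin U$. Either alternative gives the conclusion: in the first, $A\cap U=\emptyset$; in the second, $U\subseteq A$.

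Assume some point $y\in U$ survives in every $[U_n,F_n]$. For each $n$ and each $\alpha\in F_n$, nestedness leaves three options for $U_\alpha$: it is disjoint from $U$, it contains $U$, or it is a proper subset of $U$. It cannot contain $U$, because $y\in U\setminus U_\alpha$. So the only obstruction to $U\subseteq[U_n,F_n]$ is a removed set $U_\alpha\subsetneq U$. This is exactly where the refinement $\mathcal{K}'$ is used. In cases \ref{item1} and \ref{item2} of its definition, each removed $U_\alpha$ is replaced by $U_\alpha'$, the $\subseteq$-largest member of the cofinal sequence $\rho(U_\alpha)$ that still lies strictly inside the current subbasic set. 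Since $U_\alpha\subsetneq U\subsetneq \tilde U_n$ and $\rho(U_\alpha)$ is cofinal in $\mathring{\lceil U_\alpha\rceil}$, this $U_\alpha'$ should contain $U$.

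Now look at Player $\mathrm{I}$'s next move, which is a piece of $\mathcal{K}'[\tilde U_n,\tilde F_n]$. If it is one of the pieces that removes $U_\alpha'$, then $U$ is removed entirely, contradicting $y\in U$. Otherwise the new subbasic set is $U_\alpha'$ itself, with $U\subseteq U_\alpha'$. In that situation the offending sets inside $U$ are pushed up again at the next round. Running this round by round, I expect to conclude that no $U_\alpha\subsetneq U$ lies in $F_n$ for all large $n$. Together with $U\subseteq U_n$, this gives $U\subseteq \bigcap_n[U_n,F_n]$.

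For the point $x$: if $x\in U$, then $x$ is an interior point of $\bigcap_n[U_n,F_n]$, because $U$ is open. Then $\bigcap_n[U_n,F_n]$ would be an open set. An open set admits either no decomposition of the winning form or several (one for each choice of point), unless it is a singleton, and the singleton case is excluded since $U\subsetneq U_n$ and the matches stabilise by item \ref{item3}. This contradicts the uniqueness guaranteed by $\psi$ being winning. Hence $x\notin U$, so $U\subseteq A$.

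The main obstacle is the middle step: proving that removed sets $U_\alpha\subsetneq U$ cannot persist in $F_n$ forever. This requires tracking how the four cases of $\mathcal{K}'$ interact with the arbitrary refinements chosen by $\psi$ between rounds. My first attempt will be an argument modelled on the minimal-height argument at the end of Proposition~\ref{estrategia}. I will choose an offending $U_\alpha$ that is $\subseteq$-maximal, hence minimal in the tree sense. Then I will show that the sequence of replacements $U_\alpha'$ must run cofinally through $\rho(U_\alpha)$ and eventually contain $U$. At that point Player $\mathrm{I}$'s next move either discards $U$ entirely or swallows the removed set into the subbasic part.
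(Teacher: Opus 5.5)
Your outline follows the paper. You split $U$ against the $U_n$ by nestedness, then treat the hard case $U\subsetneq U_n$ for all $n$. There you aim to show $U\subseteq\bigcap_n[U_n,F_n]$ using a $\subseteq$-maximal removed set inside $U$ and the cofinal sequences $\rho(\cdot)$. Finally you rule out $x\in U$ by uniqueness of the decomposition. Your version of that last step (an open set with two points has many decompositions) is an equivalent variant of the paper's use of a point $y\in A\setminus U$. The singleton case needs no exclusion: there $A=\emptyset$ and the statement is trivial. However, the middle step is the whole content of the proposition. You do not prove it, and the mechanism you sketch for it would not close.

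Three things go wrong there.

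First, $U'_\alpha\supseteq U$ is not guaranteed. $\rho(U_\alpha)$ is only a cofinal sequence, of order type at most $\omega$, in $\mathring{\lceil U_\alpha\rceil}$. It can jump from a set containing $\tilde U_n$ straight to one strictly inside $U$.

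Second, ``swallowing the removed set into the subbasic part'' eliminates nothing. Every piece of $\mathcal{K}'[\tilde U_n,\tilde F_n]$ is a subset of $[\tilde U_n,\tilde F_n]$. So if Player I moves into $U'_\alpha$, the offender $U_\alpha$ is still cut out, and you are in the same situation one level lower. Iterating this never makes the offender disappear.

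Third, you have the direction of cofinality reversed. Cofinality in $\mathring{\lceil U_\alpha\rceil}$ means the members of $\rho(U_\alpha)$ shrink toward $U_\alpha$. So along the play they eventually lie \emph{inside} $U$, not around it.

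The missing idea is to aim the contradiction at the standing hypothesis $U\subsetneq U_n$, not at the offender. As the paper does, take $U_\beta$ $\subseteq$-maximal among the removed sets contained in $U$, so that it persists in $F_m$ for all large $m$. Then $\rho(U_\beta)$ has members strictly between $U_\beta$ and $\tilde U_m$, so $\mathcal{K}'[\tilde U_m,\tilde F_m]$ is built by items 1 or 2.

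\begin{itemize}
\item If the replacement satisfies $U'_\beta\subsetneq U$, it either contradicts maximality or forces $U_{m+1}\subsetneq U$.
\item If $U'_\beta\supseteq U$, the piece that cuts out $U'_\beta$ would kill $U$. So Player I's next subbasic set is squeezed as $U\subsetneq U_{m+1}\subseteq U'_\beta$. The paper phrases this as $U_{m+1}\in\rho(U_\beta)$.
\end{itemize}

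Hence these sets descend strictly along $\rho(U_\beta)$. By cofinality they eventually fall inside $U$, contradicting $U\subsetneq U_m$. That is exactly how the paper concludes $U\subseteq\bigcap_n[U_n,F_n]$.
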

\begin{proof}
    Suppose that $A\cap U \neq \emptyset$. In particular, $U\cap U_n \neq \emptyset$ for every $n \in \mathbb{N}$. If $U_n\subset U$ for some $n \in\mathbb{N}$, we are done, because $A\subset U_n$. Therefore, since $\mathcal{C}$ is a nested family, we can assume that $U\subsetneq U_n$ for each $n \in\mathbb{N}$. However, for a contradiction, suppose that $U\not\subset [U_n,F_n]$ for some $n \in\mathbb{N}$. Hence, $U\cap U_{\alpha} \neq \emptyset$ for some $\alpha\in F_n$, so that $U_{\alpha}\subsetneq U$ (because $\mathcal{C}$ is nested and $U\cap A \neq \emptyset$). This verifies that the set $$\left\{U_{\beta} \in \mathcal{C}: \beta\in \displaystyle \bigcup_{m\in\mathbb{N}}U_m, U_{\alpha}\subseteq U_{\beta} \subsetneq U \right\}$$ is not empty and, since $\mathcal{C}$ is noetherian, there is $U_{\beta}$ a $\subseteq-$maximal element. Hence, $\beta \in \displaystyle \bigcup_{m \geq n_0}U_m$ for some big enough $n_0\in\mathbb{N}$, since $[U_{m+1},F_{m+1}]\subset [U_m,F_m]$ for every $m \in\mathbb{N}$. Besides that, if $m > n_0$, then $\mathcal{K}'[\tilde{U_m},\tilde{F_m}]$ is not constructed as in items \ref{item3} and \ref{item4}, because, once $U_{\beta}\subsetneq U\subsetneq \tilde{U_m}$, the set $\{V \in \rho(\tilde{U_m}): U_{\beta}\subsetneq V \subsetneq \tilde{U_m}\}$ is not empty.

    Therefore, $\mathcal{K}'[\tilde{U_m},\tilde{F_m}]$ is defined according to items \ref{item1} and \ref{item2}. In particular, $U_{m+1}\in \rho(U_{\beta})$ for every $m > n_0$, because $U_{\beta}\subset U \subset  \displaystyle \bigcap_{n \in\mathbb{N}}U_n$. This means that $\{U_m\}_{m > n_0+1}$ is an infinite subsequence of $\rho(U_{\beta})$, being also cofinal (regarding $\supseteq$) in $\mathring{\lceil U_{\beta}\rceil}$. Since $U_{\beta}\subsetneq U$, we must have $U_m \subsetneq U$ for some $m > n_0+1$, which is a contradiction.

    Hence, we verified that $U\subset \displaystyle \bigcap_{n \in\mathbb{N}}U_n$. If $A\subset U$, we are done. If not, we will show that $x\notin U$, which concludes the inclusion $U\subset A$. To that aim, fix $y\in A\setminus U$ and suppose that $x \in U$. Since $U$ is a clopen set, $(A\setminus U)\setminus \{y\}$ is open, because so is $A\setminus U$. Then, we can write $\{x\}\cup A$ as $\{y\} \cup U\cup ((A\setminus U)\setminus \{y\})$, contradicting the unique representation of $\displaystyle \bigcap_{n\in\mathbb{N}}[U_{n+1},F_{n+1}]$ as the disjoint union of a point and an open set.  
\end{proof}
\begin{cor}\label{descricaoA}
    We can write $A = \displaystyle \bigcup_{U\in \mathcal{K}[A]} U$ for some disjoint family $\mathcal{K}[A]\subset \mathcal{C}$ or  $A$ is a basic open set. If this latter case, we write $\mathcal{K}[A]:=A$
\end{cor}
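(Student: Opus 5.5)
The plan is to look at the subfamily $\mathcal{D} = \{U\in\mathcal{C} : U\subseteq A\}$ and split according to whether $\mathcal{D}$ covers $A$. By Proposition~\ref{tricotomiaA}, each $U\in\mathcal{C}$ that meets $A$ satisfies either $U\subseteq A$ or $A\subsetneq U$.

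First suppose that every point $y\in A$ lies in some member of $\mathcal{D}$. The set $\{V\in\mathcal{D}: y\in V\}$ is a chain, because $\mathcal{C}$ is nested. Since $\mathcal{C}$ is noetherian, this chain has a $\subseteq$-maximum $V_y$. Now take two such maximal sets $V_y$ and $V_z$ that intersect. By nestedness one contains the other, and maximality then forces $V_y = V_z$. Hence $\mathcal{K}[A]:=\{V_y : y\in A\}$ is a disjoint subfamily of $\mathcal{C}$ whose union is exactly $A$, which gives the first alternative.

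Otherwise there is some $y\in A$ such that no $U\in\mathcal{C}$ with $y\in U$ is contained in $A$. Because $A$ is open and $\mathcal{B}$ is a basis, there is a basic set $[W,F]$ with $y\in[W,F]\subseteq A$. I will show that $A$ itself is a basic open set. The set $W$ contains $y\in A$, and $W\not\subseteq A$ by the choice of $y$, so Proposition~\ref{tricotomiaA} gives $A\subsetneq W$. Consider the family $\mathcal{W}=\{U\in\mathcal{C}: A\subseteq U\}$. It is nonempty and nested, and all its members contain $A\neq\emptyset$, so it is a chain. I then want a $\subseteq$-minimal element $W_0$ of $\mathcal{W}$. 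I would obtain it from $W$: the members of $\mathcal{W}$ below $W$ form a chain inside $\mathring{\lceil y\rceil}$-type sets, which the noetherian and $\sigma$-disjoint properties make well ordered by $\supseteq$. If that chain has no least element, I would replace $W$ by a smaller element of $\mathcal{W}$ that still works. Any basic neighbourhood of $y$ inside $A$ has the form $[W',F']$ with $W'\in\mathcal{W}$, so $W_0$ can be taken to be $W'$ from some such neighbourhood.

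Next I show that $W_0\setminus A$ is a finite union of subbasic sets. Each $z\in W_0\setminus A$ lies in some $U\in\mathcal{C}$ with $U\subseteq W_0$ and $U\cap A=\emptyset$. To see this, first note that $z\notin\overline{A}$: as in the proof of Proposition~\ref{tricotomiaA}, $\{x\}\cup A$ is an intersection of clopen sets and hence closed, so $\overline{A}\subseteq \{x\}\cup A$. Then separate $z$ from $A$ by a basic neighbourhood $[V,G]\subseteq W_0$, which we may take with $V\subseteq W_0$ by Lemma~\ref{decrescente}. If $V\cap A\neq\emptyset$, then $A\subsetneq V$ by Proposition~\ref{tricotomiaA}, since $V\not\subseteq A$, and this contradicts the minimality of $W_0$. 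Taking maximal such $U$, again by the noetherian property, gives a disjoint family $\{U_\beta\}_{\beta\in F_0}$ with $A = W_0\setminus\bigcup_{\beta\in F_0}U_\beta$.

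The main obstacle is showing that $F_0$ is finite. For this I would use the fixed neighbourhood $[W_0,F]\subseteq A$ of $y$. Each $U_\beta$ is disjoint from $[W_0,F]$, so it is covered by $\bigcup_{\alpha\in F}U_\alpha$. By nestedness and the maximality of $U_\beta$, every $U_\beta$ equals, or contains, some $U_\alpha$ with $\alpha\in F$. Disjointness then leaves at most $|F|$ of them, so $A=[W_0,F_0]\in\mathcal{B}$. The delicate points are the existence of $W_0$ and the exact identification of $\overline{A}$, and that is where I expect most of the work to be.
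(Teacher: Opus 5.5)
Your first case, where every point of $A$ lies in some $U\in\mathcal{C}$ with $U\subseteq A$, is correct and is the paper's second case. Your second case has a genuine gap: it depends on a $\subseteq$-minimal $W_0\in\mathcal{W}$, and your justification for it fails. The noetherian property says every chain in $\mathcal{C}$ has a $\subseteq$-\emph{maximum}, so chains are well ordered by $\supseteq$. That produces largest members, not smallest ones. In general the chain of subbasic sets containing a point has no least element; for instance, in the ray space of the binary tree, take the sets $[t_n,\emptyset]$ along a fixed branch. Your step ``replace $W$ by a smaller element that still works'' has no termination argument. The remark that $W_0$ can be taken from some basic neighbourhood of $y$ does not give minimality.

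The later steps have further holes. The inclusion $\overline{A}\subseteq\{x\}\cup A$ only gives $z\notin\overline{A}$ for $z\neq x$. Nothing you say excludes $x\in W_0\setminus A$; note that $x\in U_n$ for every $n$. Proving $x\notin\overline{A}$ essentially means proving that $A$ is closed, which is part of what you need to show. Also, the contradiction with the minimality of $W_0$ needs $V\subsetneq W_0$, but your choice only guarantees $V\subseteq W_0$.

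None of this machinery is needed. The observation you use only in your last paragraph, applied to $W$ itself, already finishes the proof, and this is what the paper does. You have $y\in[W,F]\subseteq A\subseteq W$ and $y\notin U_\alpha$ for every $\alpha\in F$, so no $U_\alpha$ contains $A$. By Proposition~\ref{tricotomiaA}, each $U_\alpha$ is then either disjoint from $A$ or contained in $A$. Put $F'=\{\alpha\in F : U_\alpha\cap A=\emptyset\}$. Then $A\subseteq[W,F']$, and conversely $[W,F']\subseteq[W,F]\cup\bigcup_{\alpha\in F\setminus F'}U_\alpha\subseteq A$. Hence $A=[W,F']\in\mathcal{B}$, with no minimal $W_0$, no closure computation and no finiteness count.

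The same observation would also repair the existence of $W_0$. Every $U\in\mathcal{W}$ with $U\subseteq W$ satisfies $W\setminus U=W\cap\bigcup\{U_\alpha:\alpha\in F,\ U_\alpha\cap U=\emptyset\}$, so there are at most $2^{|F|}$ such $U$. Once you see this, though, $W_0$ is superfluous.
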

\begin{proof}
    First, suppose that there is $y\in A$ such that $A\subset U$ for every $U\in \mathcal{C}$ containing $y$. Since $A$ is open, there is $[U,F]\in\mathcal{B}$ such that $y\in [U,F]\subset A$. Hence, we must have $A\subset U$. Moreover, for every $\alpha\in F$ such that $A\cap U_{\alpha}\neq \emptyset$, we must have $U_{\alpha}\subseteq A$ by the above Proposition, because $y\notin U_{\alpha}$. Therefore, by considering the family $F' = \{\beta \in F : A\cap U_{\beta} = \emptyset\}$, we conclude that $A = [U,F']$ is a basic open set.

    Now, suppose that, for every $y\in A$, there is $U_y \in \mathcal{C}$ satisfying $y\in U_y \subset A$. Since $\mathcal{C}$ is nested and noetherian, we can apply Zorn's Lemma to define the set $$\tilde{A} = \{\tilde{y} \in A: U_{\tilde{y}}\text{ is }\subseteq-\text{maximal in }\{U_{y}\}_{y\in A} \}$$ and verify that $\mathcal{K}[A]:=\{U_{\tilde{y}}: \tilde{y}\in \tilde{A}\}$ is a disjoint cover for $A$. Actually, this is precisely the same argument presented by the definition of $\mathcal{K}'[U,F]$ as in item \ref{item1}.
\end{proof}

The description of the open set $A$ as in Corollary \ref{descricaoA} suggests that the end game $\mathrm{End}_{\mathcal{B}}$ could be played again, with Player $\mathrm{I}$ starting the match by declaring $A$ (if it is a basic open set) or some clopen set from $\mathcal{K}[A]$. Hence, by iteratively playing $\mathrm{End}_{\mathcal{B}}$ against the winning strategy $\psi$ of Player $\mathrm{II}$, we can describe a tree $T_{\mathcal{C}}$ by induction as follows:  

\begin{itemize}
    \item  Without loss of generality, we can assume that $X\in \mathcal{C}$. Then, we set $X = [X,\emptyset]$ as the root of $T_{\mathcal{C}}$;
    \item Every node of $T_{\mathcal{C}}$ is some basic open set $[U,F]$ from $\mathcal{B}$, possibly even a subbasic one when considering $F = \emptyset$. Moreover, $T_{\mathcal{C}}$ is ordered by the inverse inclusion $\supseteq$. In this case, we consider the (disjoint) open cover $\mathcal{W} = \psi([U,F])$. Then, the successors of $[U,F]$ in $T_{\mathcal{C}}$ are precisely the open sets from $$\mathcal{K}[U,F] : =\displaystyle \bigcup_{[\tilde{U},\tilde{F}]\in \mathcal{W}}\mathcal{K}'[\tilde{U},\tilde{F}],$$ described by the items \ref{item1}-\ref{item4};
    \item For a fixed high-ray $R$ of $T_{\mathcal{C}}$, we consider $\{[U_n,F_n]\}_{n\in \mathbb{N}}$ a cofinal sequence in $R$. We can see these open sets as answers for Player $\mathrm{I}$ in the match $$\langle [U_0,F_0],\psi([U_0,F_0]), [U_1,F_1], \psi([U_1,F_1]), [U_2,F_2], \psi([U_2,F_2]),\dots \rangle$$ 
    Since $\psi$ is a winning strategy for Player $\mathrm{II}$, we can write uniquely $$\displaystyle \bigcap_{n =0}^{\infty}[U_n,F_n] = \{x\} \cup A$$ for some point $x\in X$ and some open set $A\subset X$ not containing $x$. Then, we define $\mathcal{K}[A]$ as the set of tops of the high-ray $R$, where $\mathcal{K}[A]$ is given by Corollary \ref{descricaoA}. If $\{[V_n,G_n]\}_{n\in\mathbb{N}}$ is another cofinal sequence in $R$, then $\displaystyle \bigcap_{n \in \mathbb{N}}[U_n,F_n] = \displaystyle \bigcap_{n\in\mathbb{N}}[V_n,G_n]$, because $[U_{n+1},F_{n+1}]\subset [U_n,F_n]$ and $[V_{n+1},G_{n+1}]\subset [V_n,G_n]$ for every $n\in \mathbb{N}$. In other words, the description of $A$ does not depend on the choice of $\{[U_n,F_n]\}_{n\in \mathbb{N}}$. Hence, this procedure defines the limit nodes of $T_{\mathcal{C}}$. 
\end{itemize}

 Relying on the fact that $\mathcal{C}$ is $\sigma-$disjoint, we will now argue how $T_{\mathcal{C}}$ is a special tree. To this aim, we observe that, for every $U\in \mathcal{C}$, it is enough to construct a partition $\{\mathcal{A}_U^n\}_{n\in\mathbb{N}}$ of $$T_U = \{[U,F]: [U,F]\in T_{\mathcal{C}} \text{ for some finite }F\subset \Lambda\}$$ into countably many disjoint families. This because, if $\{\mathcal{C}_k\}_{k\in \mathbb{N}}$ is a partition of $\mathcal{C}$ into antichains, then $\{T_k^n:n,k\in\mathbb{N}\}$ is a partition of $T$ into antichains, where $T_k^n = \displaystyle \bigcup_{U\in \mathcal{C}_k}\mathcal{A}_U^n$ for every $n,k\in\mathbb{N}$. Indeed, the description of $\{\mathcal{A}_U^n\}_{n\in\mathbb{N}}$ is the core of the following lemma:
 
\begin{lemma}
    For each $U\in \mathcal{C}$, there is $\{\mathcal{A}_U^n\}_{n\in\mathbb{N}}$ a cover of $T_U$ by antichains. In particular, $T_{\mathcal{C}}$ is a special tree.
\end{lemma}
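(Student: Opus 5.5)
The plan is to control how many nodes of $T_{\mathcal{C}}$ can share the same subbasic set $U$, and how they sit relative to each other. First I would show that any two nodes of $T_U$ are \emph{comparable} in $T_{\mathcal{C}}$ unless they are disjoint as sets. Indeed, if $[U,F]$ and $[U,F']$ are incomparable nodes, they lie in different successors of their meet in $T_{\mathcal{C}}$. Every successor family $\mathcal{K}[\cdot]$ is a disjoint cover, and the tops of a high-ray, being $\mathcal{K}[A]$ or $A$ by Corollary \ref{descricaoA}, are disjoint from each other and from all other nodes below the limit. So incomparable nodes of $T_{\mathcal{C}}$ are disjoint open sets. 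Incomparable nodes with the same $U$ give an antichain in the set sense, which is harmless. Thus the only obstruction to being an antichain inside $T_U$ comes from \emph{chains} of nodes with first coordinate $U$. The key step is therefore:
\begin{equation*}
\text{every chain in } T_U \text{ is countable, and in fact has order type at most } \omega .
\end{equation*}
Granting this, I would define $\mathcal{A}_U^n$ as the set of nodes $[U,F]\in T_U$ having exactly $n$ predecessors in $T_U$. Since $\{[U,G]\in T_U : [U,G] < [U,F]\}$ is a finite chain, each node lands in exactly one level. Two distinct nodes in the same level are incomparable, hence disjoint. This gives the partition $\{\mathcal{A}_U^n\}_{n\in\mathbb{N}}$, and specialness of $T_{\mathcal{C}}$ follows from the $T_k^n$ argument given just before the lemma.

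To prove the key step, I would follow a chain $[U,F_0] > [U,F_1] > \cdots$ in $T_U$ (ordered by $\supseteq$) through the construction. Passing from one node to the next in $T_U$, Player $\mathrm{II}$'s answer $\psi([U,F_k])$ and the refinement $\mathcal{K}'$ must produce a piece whose first coordinate is still $U$. Inspecting items \ref{item1}--\ref{item4}, the first coordinate stays equal to $U$ only in items \ref{item2}--\ref{item4}, that is, in case $ii)$ of Lemma \ref{doiscasos}. There, a unique point $x$ has $U$ as its smallest subbasic neighbourhood, and the piece retaining $U$ is the one containing $x$. So the whole chain consists of sets containing the single point $x_U$, and in each step some $U_\alpha$ with $\alpha\in F$ is replaced by a strictly larger set $U_\alpha'$ from $\rho(U_\alpha)$, or a new $U_y$ is removed as in item \ref{item4}.

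Next I would show that the chain cannot reach a limit stage and then continue with first coordinate $U$. Suppose $[U,F_k]$, $k\in\mathbb{N}$, were cofinal in a high-ray $R$ whose tops contain a set of the form $[U,F]$. The game played along $R$ has $\bigcap_k [U,F_k]=\{x\}\cup A$. A top with first coordinate $U$ must be $A$ itself as a basic set (Corollary \ref{descricaoA}). But $x=x_U$ lies in every $[U,F_k]$, and $A\ni$ points whose smallest subbasic set is $U$ is impossible: a point of $A=[U,F']$ other than $x_U$ lies in some $U_y\subsetneq U$, and $U_y\subseteq A$ or is disjoint from it by Proposition \ref{tricotomiaA}. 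This would allow rewriting $A$ as a union of proper subbasic sets, contradicting the choice of representation in Corollary \ref{descricaoA}. The point $x_U$ itself is the singled-out point, not in $A$. Making this last contradiction precise, that is, showing that a top coming from $\mathcal{K}[A]$ cannot reuse the subbasic set $U$ of the preceding cofinal sequence, is the step I expect to be the main obstacle. I would attack it by using Proposition \ref{tricotomiaA} together with the uniqueness of the decomposition $\{x\}\cup A$, as in the last paragraph of that proposition's proof.
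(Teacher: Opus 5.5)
\textbf{Gap: the limit step.} Your outline is the same as the paper's, and it is sound up to the last step:
\begin{itemize}
\item a node of $T_U$ in case $i)$ of Lemma~\ref{doiscasos} has no further descendants in $T_U$;
\item a node in case $ii)$ has exactly one successor in $T_U$, namely the piece containing $x_U$;
\item your partition of $T_U$ by the number of $T_U$-predecessors is fine once every chain in $T_U$ has order type at most $\omega$.
\end{itemize}
The gap is exactly the step you flag. You assert that $x_U$ is the singled-out point of $\bigcap_k[U,F_k]=\{x\}\cup A$, but this is the real content of the lemma. It does not follow from Proposition~\ref{tricotomiaA} and uniqueness of the decomposition. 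If the singled-out point were some $z\neq x_U$, then $x_U\in A$. Those two facts then only give $A\subseteq U$ and $U_z\cap A=\emptyset$ for a subbasic $U_z\ni z$ with $U_z\subsetneq U$, which is no contradiction.

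For instance, suppose the removed sets were disjoint subbasic sets exhausting $U_z\setminus\{z\}$, with $z$ not isolated. The intersection would be $\{z\}\cup(U\setminus U_z)$. This decomposes uniquely, yet $x_U\in A=U\setminus U_z$. That $A$ is a legitimate top with first coordinate $U$, so the chain in $T_U$ would run past $\omega$, and your level partition would break down. In fact your sentence ``$x=x_U$ lies in every $[U,F_k]$'' silently identifies the point of the decomposition with $x_U$.

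\textbf{The missing idea.} What excludes this is how $\mathcal{K}'$ generates Player~I's moves. You noted the mechanism yourself: removed sets are enlarged along $\rho(U_\alpha)$, or a fresh $U_y$ is removed. But you never use it at the limit.

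The paper shows that $x_U$ is not interior to $\bigcap_k[U,F_k]$:
\begin{itemize}
\item Suppose it were. Since $U$ is the smallest subbasic set containing $x_U$, a neighbourhood of $x_U$ inside the intersection can be taken of the form $[U,F_\infty]$, with $F_\infty$ minimal.
\item Item~\ref{item2} keeps pushing removed sets upward along the sequences $\rho(\cdot)$. By noetherianity, each $U_\alpha$ with $\alpha\in F_\infty$ eventually becomes one of the removed sets.
\item Beyond that stage no removed set can be enlarged, so item~\ref{item4} applies. It removes a fresh $U_\beta$ disjoint from all previously removed sets.
\item Then $U_\beta\subseteq[U,F_\infty]$ while $U_\beta\cap[U,F_{n+1}]=\emptyset$, contradicting $[U,F_\infty]\subseteq\bigcap_k[U,F_k]$.
\end{itemize}
Hence $x_U$ is the singled-out point and $x_U\notin A$. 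The rest is then immediate. Any set $[U,H]$ contains $x_U$, because each $U_\alpha$ with $\alpha\in H$ is a proper subset of $U$ and so misses $x_U$. So no top, and hence no later node, has first coordinate $U$, and your detour through rewriting $A$ as a union of proper subbasic sets is unnecessary.
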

\begin{proof}
     Consider $\mathcal{A}_U^0$ the set of $\subseteq-$maximal elements from $T_U$, or, equivalently, the set of minimal elements of $T_U$ is the tree order of $T_{\mathcal{C}}$. Clearly, $\mathcal{A}_U^0$ is an antichain. For each $V\in \mathcal{A}_U^0$, it is enough to conclude that the set $\downarrow V = \{V' \in T_U : V'\subsetneq V\}$ is countable, written as $\{V_i\}_{i < \kappa_V}$ for some cardinal $\kappa_V \leq \omega$. Then, for $n \geq 1$, the claimed antichain $\mathcal{A}_U^n$ can be given by $\mathcal{A}_U^n = \{V_n : V \in \mathcal{A}_U^0 \text{ and }\kappa_V \geq n\}$.

     In fact, a basic open set  $V\in \mathcal{A}_U^0$ can be written as $[U,F]$ for some finite $F\subset \Lambda$. First, suppose that, for every $x\in V$, there is a clopen set $U_x\in \mathcal{C}$ such that $x\in U_x \subsetneq U$. For a basic open set $[U_0,F_0]\in \psi([U,F])$, we have either $U_0\subsetneq U$ or $U_0=U$. In the former case, every element from $\mathcal{K}'[U_0,F_0]$ can be written as $[U_1,F_1]$ for some clopen set $U_1\subseteq U_0 \subsetneq U$. In the latter, $\mathcal{K}'[U_0,F_0]$ is defined according to item \ref{item1}, by the assumption over $U$. Then, every element from $\mathcal{K}'[U_0,F_0]$ has the form $[U_1,F_1]$ for some clopen set $U_1\subsetneq U_0 = U$. In both cases, $U_1\subsetneq U$ for each $[U_1,F_1]\in \mathcal{K}[U,F]$. As a consequence, for every $[U_2,F_2]\in T_{\mathcal{C}}$ such that $[U_2,F_2]\subsetneq [U,F]$, we must have $U_2\subsetneq U$. Hence, $\downarrow V = \emptyset$ in this case.

     Now, according to the dichotomy of Lemma \ref{doiscasos}, we assume the existence of a unique $x\in [U,F]$ such that $U$ is the smallest clopen set from $\mathcal{C}$ containing $x$. Then, $\mathcal{K}'[\tilde{U},\tilde{F}]$ is constructed following items \ref{item2}-\ref{item4}, if $[\tilde{U},\tilde{F}]$ denotes the open set from $\psi([U,F])$ containing $x$. In any of the three criteria, there is a unique $V'\in \mathcal{K}[U,F]$ of the form $V' = [U,F_1]$ for some finite $F_1\subset \Lambda$, i.e., $T_U\cap \mathcal{K}[U,F] = \{V'\}$. Indeed, $V'$ is the open set from $\mathcal{K}[U,F]$ containing $x$. By induction, suppose that we have defined $F_1,F_2,\dots,F_n \subset \Lambda$ such that $T_U\cap \mathcal{K}[U,F_i] = \{[U,F_{i+1}]\}$ for every $1\leq i < n$. Moreover, assume that $x\in [U,F_i]$ for every $1\leq i \leq n$. As before, item $ii)$ of Lemma \ref{doiscasos} holds, so that the open set from $\mathcal{K}[U,F_n]$ containing $x$ is the unique that can be written as $[U,F_{n+1}]$ for some finite $F_{n+1}\subset \Lambda$. More precisely, $T_U\cap \mathcal{K}[U,F_n] = \{F_{n+1}\}$, because, if $[\tilde{U},\tilde{F}]$ denotes the open set from $\psi([U,F])$ containing $x$, the partition $\mathcal{K}'[\tilde{U},\tilde{F}]$ was again constructed according to items \ref{item2}-\ref{item4}.

     At the end of this recursive process, $\{[U,F_n]\}_{n\in\mathbb{N}}$ defines a cofinal sequence in a high-ray $R$. Alternatively, we can see $\{[U,F_n]\}_{n\in\mathbb{N}}$ as the moves of Player $I$ in the match $$\langle [U,F_0],\psi([U,F_0]), [U,F_1],\psi([U,F_1]), [U,F_2],\psi([U,F_2]),\dots \rangle$$ For instance, suppose that there is an open neighborhood of $x$ in $\displaystyle \bigcap_{n=0}^{\infty}[U,F_n]$ containing at least two elements. Since $U$ is the smallest clopen set from $\mathcal{C}$ containing $x$, we can represent this open neighborhood as $[U,F_{\infty}]$ for some finite $F_{\infty}\subset \Lambda$. Assuming that $F_{\infty}$ is $\subseteq-$minimal with this property, for every $\alpha \in F_{\infty}$ there are $n_{\alpha}\in \mathbb{N}$ and $\eta_{\alpha}\in F_{n_{\alpha}}$ such that $U_{\eta_{\alpha}}\subseteq U_{\alpha}$. If $U_{\eta_{\alpha}}$ is a proper subset of $U_{\alpha}$, then $\{W\in \rho(U_{\eta_{\alpha}}): U_{\eta_{\alpha}}\subsetneq W\subsetneq U\}$ is not empty, so that $\mathcal{K}'[\tilde{U},\tilde{F}]$ is constructed as in item \ref{item2}. Again, $[\tilde{U},\tilde{F}]\in \psi([U,F_{n_{\alpha}}])$ denotes the open set containing $[U,F_{n_{\alpha}+1}]$. Hence, there is $\eta_{\alpha}'\in F_{n_{\alpha}+1}$ such that $ U_{\eta_{\alpha}}\subsetneq U_{\eta_{\alpha}'}\subseteq U_{\alpha}$. Since there are no infinite $\subseteq-$increasing chains in $\mathcal{C}$, we can actually choose, for each $\alpha\in F_{\infty}$, a index $n_{\alpha}\in \mathbb{N}$ so that $\alpha \in F_{n_{\alpha}}$.

     Therefore, if $n =\max\{n_{\alpha} : \alpha\in F_{\infty}\}$, then $\mathcal{K}'[\tilde{U},\tilde{F}]$ is described by item \ref{item4}, where  $[\tilde{U},\tilde{F}]\in \psi([U,F_n])$ is the open set containing $[U,F_{n+1}]$. Then, $F_{n+1} = F_n\cup \{\beta\}$ for some index $\beta\in \Lambda \setminus F_n$ such that $U_\beta\cap U_{\beta'} = \emptyset$ for every $\beta'\in F_n$. On the other hand, $F_n = F_{\infty}$ by the choice of $n$ and the $\subseteq-$minimality of $F_{\infty}$. Hence, we contradict the fact that $[U,F_{\infty}]\subset [U,F_{n+1}]$.

     In other words, we proved that, when writing $\displaystyle \bigcap_{n=0}^{\infty}[U,F_n]$ as a disjoint union of a point and an open set $A$, this point must be $x$. By Corollary \ref{descricaoA}, $A$ has either the form $[W,H]\in \mathcal{B}$ or it is a disjoint union of a family $\mathcal{K}[A]\subset \mathcal{C}$. In the former case, we note that $W\subsetneq U$: otherwise, $W = U$ and $x \in U_{\alpha}$ for some $\alpha\in H$, contradicting the minimality of $U$ as a clopen set from $\mathcal{C}$ containing $x$. In the latter, $U \notin \mathcal{K}[A]$ because $x\notin A$. In both cases, if $[U',F']\in T_{\mathcal{C}}$ is contained in some top of $R$, then $U'\subsetneq U$. This proves that $\downarrow V = \{[U,F_n]: n \geq 1\}$.   
\end{proof}

In particular, the last paragraph of the above proof verifies the following useful property:

\begin{cor}\label{mesmabase}
    Consider an infinite chain in $T_{\mathcal{C}}$ of the form $\{[U,F_n]\}_{n\in\mathbb{N}}$, for some $U\in \mathcal{C}$. Then, let $$\bigcap_{n=0}^{\infty}[U,F_n] = \{x\} \cup A$$ be the unique description of $\displaystyle \bigcap_{n=0}^{\infty}[U,F_n]$ as the disjoint union of a point and an open set. Then, as in item $ii)$ of Lemma \ref{doiscasos}, $U$ is the smallest clopen set of $\mathcal{C}$ containing $x$.
\end{cor}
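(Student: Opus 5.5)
The proof will take the point $x$ and show that $U$ is minimal among members of $\mathcal{C}$ containing $x$, in three steps.

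\textbf{Step 1: $x\in U$ and a competitor.} Since $\{x\}\cup A=\bigcap_n [U,F_n]\subseteq U$, we have $x\in U$. Suppose, for a contradiction, that some $V\in\mathcal{C}$ satisfies $x\in V\subsetneq U$.

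\textbf{Step 2: reduce to the dichotomy of Lemma \ref{doiscasos}.} The key observation concerns the root $[U,F_0]$ of the chain. Its successors in $T_{\mathcal{C}}$ come from the sets $\mathcal{K}'[\tilde U,\tilde F]$ with $[\tilde U,\tilde F]\in\psi([U,F_0])$, and each such $\tilde U\subseteq U$.

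\begin{itemize}
\item If item $i)$ of Lemma \ref{doiscasos} held for $[U,F_n]$, take the member $[\tilde U,\tilde F]\in\psi([U,F_n])$ containing $[U,F_{n+1}]$. Then either $\tilde U\subsetneq U$, or $\tilde U=U$ and $\mathcal{K}'$ is built by item \ref{item1}.
\item In both sub-cases, every element of $\mathcal{K}[U,F_n]$ has the form $[U',F']$ with $U'\subsetneq U$. This is exactly the first case in the proof of the preceding lemma.
\item That contradicts $[U,F_{n+1}]$ being a successor of $[U,F_n]$ with first coordinate $U$.
\end{itemize}

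Hence item $ii)$ holds at every stage $n$. So for each $n$ there is a unique $x_n\in[U,F_n]$ with $U$ the smallest member of $\mathcal{C}$ containing $x_n$.

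Next, $x_n$ does not depend on $n$. Since $[U,F_{n+1}]\subseteq[U,F_n]$, the point $x_{n+1}$ also witnesses $ii)$ for $[U,F_n]$. Uniqueness in Lemma \ref{doiscasos} then gives $x_{n+1}=x_n$. Call this common point $y$; it lies in $\bigcap_n[U,F_n]$.

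\textbf{Step 3: identify $y$ with $x$.} This is the core of the argument and the main obstacle; it is precisely what the last paragraphs of the preceding proof establish.

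\begin{itemize}
\item Assume $y$ has an open neighbourhood inside $\bigcap_n [U,F_n]$ with at least two points. Write it as $[U,F_\infty]$ with $F_\infty$ $\subseteq$-minimal; this form is available because $U$ is the least member of $\mathcal{C}$ containing $y$.
\item Using noetherianity together with the construction in items \ref{item2}--\ref{item4}, every $\alpha\in F_\infty$ eventually appears in some $F_{n_\alpha}$.
\item At $n=\max_\alpha n_\alpha$, the construction falls under item \ref{item4}. So $F_{n+1}=F_n\cup\{\beta\}$ with $U_\beta$ nonempty and disjoint from the earlier sets, which contradicts $[U,F_\infty]\subseteq[U,F_{n+1}]$.
\end{itemize}

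Thus $y$ is isolated relative to the intersection in the required sense. Uniqueness of the decomposition $\{x\}\cup A$ then forces $y=x$; otherwise moving $y$ into the open part would give a second decomposition.

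\textbf{Conclusion.} Now $V\ni x=y$ with $V\subsetneq U$ contradicts the minimality of $U$ for $y$. Therefore $U$ is the smallest clopen set of $\mathcal{C}$ containing $x$.
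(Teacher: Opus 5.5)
Your proposal follows the paper's own route: the corollary is read off from the final part of the proof that $T_{\mathcal{C}}$ is special, where item $ii)$ of Lemma \ref{doiscasos} is shown to persist along the chain with a fixed witness point, and the $F_\infty$/item \ref{item4} argument then identifies that witness with the point of the decomposition. One small repair is needed at the last step. It does not need uniqueness: if $y\neq x$ then $y\in A$, and $A$ itself is an open neighbourhood of $y$ inside the intersection. The case $A=\{y\}$, which your ``at least two points'' hypothesis leaves open, is closed by the same step that puts each $\alpha\in F_\infty$ into the $F_n$, since that step gives $[U,F_n]=[U,F_\infty]=\{y\}$ for large $n$.
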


To summarize, $T_{\mathcal{C}}$ displays suitable basic open sets from $\mathcal{B}$ via a tree ordered by $\supseteq$. In particular, any chain in $T_{\mathcal{C}}$ of the form $[U_0,F_0]\subseteq [U_1,F_1]\subseteq [U_2,F_2]\subseteq \dots$ must stabilize, i.e., there must exist $[U_n,F_n]$ a $\subseteq-$maximal element for some $n \in \mathbb{N}$. Moreover, if $[U,F]$ and $[V,G]$ are incomparable in the tree order of $T_{\mathcal{C}}$, so that $[U,F]\not\subseteq [V,G]$ and  $[V,G]\not\subseteq [U,F]$, then $[U,F]\cap [V,G] = \emptyset$. This because $\bigcap \{[W, H] \in T_{\mathcal{C}} : [W,H]\subseteq [U,F] \text{ and }[W,H]\subseteq [V,G] \}$ is either a basic open set $I = [W',H']$ or a disjoint union of a point and an open set $I$. In both cases, $[U,F]$ and $[V,G]$ are subsets of distinct elements from a disjoint open cover for $I$. In particular, any antichain in the tree $T_{\mathcal{C}}$ is an antichain of $X$ as a topological space. In other words, the set $\psi(\mathcal{C}) := \{[U,F] \in \mathcal{B}: [U,F]\in T_{\mathcal{C}}\}$ of nodes of $T_{\mathcal{C}}$ is a noetherian, nested and $\sigma-$disjoint family. Moreover, its elements are also clopen sets, as observed by Lemma \ref{descricao}.

In particular, for every $x\in X$, the set $\mathcal{V}_x' = \{[U,F]\in \psi(\mathcal{C}): x \in [U,F]\}$ is a chain in  $\psi(\mathcal{C})$, because, once $\psi(\mathcal{C})$ is nested, its elements are pairwise comparable. Besides that, for every $[U,F]\in \mathcal{V}_x'$, there is a (unique) basic open set from $\mathcal{K}[U,F]$ containing $x$, since $\mathcal{K}[U,F]$ is a disjoint open cover for $[U,F]$. Therefore, $\mathcal{V}_x'$ describes a high-ray in $T_{\mathcal{C}}$, so that $\displaystyle \bigcap \mathcal{V}_x'$ is written uniquely as a union of a point and an open set $A$ not containing it. We observe that this point is precisely $x$: otherwise, $x$ belongs to some $[U,F]\in \mathcal{K}[A]$, contradicting the definition of $\mathcal{V}_x'$. Due to this fact, we write $A_x = A$ and state the following remark:  

\begin{prop}\label{baselocal}
    For every $x\in X$, fix $\{[U_n,F_n]\}_{n\in \mathbb{N}}$ a countable cofinal sequence in $\mathcal{V}_x'$ regarding the tree order of $T_{\mathcal{C}}$, whose existence follows from the fact that $T_{\mathcal{C}}$ is special. Then, the set $$\mathcal{V}_x = \left\{[U_n,F_n]\setminus \bigcup_{U\in\mathcal{F}} U : n\in\mathbb{N}, \mathcal{F}\subset \mathcal{K}[A_x]\text{ finite}\right\}$$ is a local basis for $x$. In particular, by Lemma \ref{descricao}, $\psi(\mathcal{C})$ is a special clopen subbase for $X$. 
\end{prop}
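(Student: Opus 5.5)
We need to show that $\mathcal{V}_x$ is a local basis at $x$. Each element of $\mathcal{V}_x$ is open and contains $x$: indeed $x\in[U_n,F_n]$, and $x\notin A_x=\bigcup\mathcal{K}[A_x]$. Here, when $A_x$ is itself a basic open set, we read $\mathcal{K}[A_x]=\{A_x\}$. The elements of $\mathcal{K}[A_x]$ are clopen, since they are subbasic or basic, so each $[U_n,F_n]\setminus\bigcup_{U\in\mathcal{F}}U$ is open (in fact clopen). So the real content is this: given an open $W\ni x$, find $n$ and a finite $\mathcal{F}\subset\mathcal{K}[A_x]$ with $[U_n,F_n]\setminus\bigcup\mathcal{F}\subseteq W$. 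We may shrink $W$ to a basic $[V,G]\in\mathcal{B}$ with $x\in[V,G]$.

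\textbf{Step 1: reduce to the intersection.} Because the $[U_n,F_n]$ decrease and $\bigcap_n[U_n,F_n]=\{x\}\cup A_x$, it suffices to show two things.
\begin{itemize}
\item[(a)] Some $[U_n,F_n]$ is contained in $V\cup\bigcup_{U\in\mathcal{F}_0}U$ for a finite $\mathcal{F}_0\subset\mathcal{K}[A_x]$, where we may also assume $U_n\subseteq V$.
\item[(b)] For each $\beta\in G$, the set $U_\beta$ (which misses $x$) meets $[U_n,F_n]$ only inside finitely many members of $\mathcal{K}[A_x]$, for $n$ large.
\end{itemize}
We will use the nested structure heavily. By Proposition \ref{tricotomiaA}, each $U_\beta$ with $\beta\in G$ satisfies $U_\beta\cap A_x=\emptyset$, or $U_\beta\subseteq A_x$, or $A_x\subseteq U_\beta$. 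The last case is impossible unless $A_x=\emptyset$, since otherwise $x\notin U_\beta$ would contradict $x$ being in the closure structure. More precisely, this case has to be excluded using Corollary \ref{mesmabase}.

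\textbf{Step 2: handle $V$.} The chain $\{U_n\}$ consists of subbasic sets containing $x$, so it is comparable with $V$. We need $U_n\subseteq V$ eventually. If instead $V\subsetneq U_n$ for all $n$, the argument of the first part of the proof of Proposition \ref{tricotomiaA} applies: the cofinal sequences $\rho(\cdot)$ used in items \ref{item1} and \ref{item2} force the sequence $U_n$ to descend through $\mathring{\lceil V\rceil}$, so eventually $U_n\subseteq V$. This gives (a) even with $\mathcal{F}_0=\emptyset$.

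\textbf{Step 3: handle each $\beta\in G$, the main obstacle.} Fix $\beta\in G$ with $U_\beta\cap[U_n,F_n]\neq\emptyset$ for all $n$. Since $x\notin U_\beta$ and $U_\beta$ is comparable with the $U_n$, we get $U_\beta\subsetneq U_n$ for all $n$. If $U_\beta\cap A_x=\emptyset$, then $U_\beta\cap\bigcap_n[U_n,F_n]=\emptyset$, and we need this emptiness to be witnessed at a finite stage. This is exactly the kind of statement proved in the middle of Proposition \ref{tricotomiaA}. Take the $\subseteq$-maximal $U_\gamma$ with $U_\beta\subseteq U_\gamma$ among those excluded by some $F_m$; it exists by noetherianity. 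The construction in items \ref{item1}--\ref{item4} replaces excluded sets by their $\rho$-approximants, and the winning condition together with Corollary \ref{mesmabase} then shows that eventually some $\alpha\in F_m$ has $U_\beta\subseteq U_\alpha$. If instead $U_\beta\subseteq A_x$, then $U_\beta$ lies inside a single member of $\mathcal{K}[A_x]$ by nestedness. In that case we add that member to $\mathcal{F}$.

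Combining over the finitely many $\beta\in G$ yields $n$ and a finite $\mathcal{F}$ with $[U_n,F_n]\setminus\bigcup\mathcal{F}\subseteq[V,G]$. The final claim then follows from Lemma \ref{descricao}, since every element of $\mathcal{V}_x$ has the form $[U_n,F_n\cup\mathcal{F}]$ built from sets in $\psi(\mathcal{C})$, and $\psi(\mathcal{C})$ is already known to be clopen, nested, noetherian and $\sigma$-disjoint. The delicate point is the finite-stage witnessing in Step 3, which relies on the replacement of $F$ by $\rho$-approximants in items \ref{item1} and \ref{item2}.
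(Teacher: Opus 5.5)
Your outline has the same skeleton as the paper's proof: reduce to a basic $[V,G]\ni x$, get $U_n\subseteq V$ eventually, show that each relevant $U_\beta$ ($\beta\in G$) lies in $A_x$, and absorb it into one member of $\mathcal{K}[A_x]$. The opening remarks and the final appeal to Lemma \ref{descricao} are fine. However, the two substantive steps are not carried out, and the ingredients you cite sit in the wrong places.

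\textbf{Step 2.} Nothing in your sketch makes the $U_n$ ``descend through $\mathring{\lceil V\rceil}$''. Items \ref{item1}--\ref{item4} only ever use the sequences $\rho(U_\alpha)$ of \emph{excluded} sets $U_\alpha$ with $\alpha\in F_n$; they never use $\rho(V)$. The argument from Proposition \ref{tricotomiaA} that you invoke needs as input an excluded $U_\alpha$ meeting $V$, hence $U_\alpha\subsetneq V$ because $x\in V\setminus U_\alpha$. Without such a $U_\alpha$ you only learn that $V\subseteq\bigcap_n[U_n,F_n]$.

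Producing that $U_\alpha$ is exactly where the winning condition enters. If no excluded set met $[V,G]$, then $x$ would have an open neighbourhood inside $\bigcap_n[U_n,F_n]=\{x\}\cup A_x$, contradicting the uniqueness of that decomposition. Now choose $U_\alpha$ to be $\subseteq$-maximal. Then one of two things happens:
\begin{itemize}
\item[(1)] The $U_n$ strictly decrease. The $\rho(U_\alpha)$-replacement of items \ref{item1} and \ref{item2} makes them cofinal in $\mathring{\lceil U_\alpha\rceil}$, which contains $V$, so eventually $U_n\subseteq V$.
\item[(2)] The $U_n$ stabilize at some $U$. Corollary \ref{mesmabase} makes $U$ the least member of $\mathcal{C}$ containing $x$, contradicting $x\in V\subsetneq U$.
\end{itemize}
Neither the uniqueness argument nor the stabilizing case appears in your Step 2.

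\textbf{Step 3.} Here the configuration is reversed. Under your standing assumption that $U_\beta$ meets every $[U_m,F_m]$, no excluded set can contain $U_\beta$. So ``the $\subseteq$-maximal excluded $U_\gamma\supseteq U_\beta$'' does not exist. Also, the winning condition and Corollary \ref{mesmabase} play no role in this step.

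What works (and is the paper's argument) is the following. Suppose $U_\beta\not\subseteq\bigcap_m[U_m,F_m]$. Take the $\subseteq$-maximal excluded $U_\alpha$ with $U_\alpha\subsetneq U_\beta$, say $\alpha\in F_n$; it exists in this case. Item \ref{item1} or \ref{item2} then gives $\alpha'\in F_{n+1}$ with $U_\alpha\subsetneq U_{\alpha'}$. Nestedness plus maximality force $U_\beta\subseteq U_{\alpha'}$, so $U_\beta$ misses $[U_{n+1},F_{n+1}]$, a contradiction. Hence $U_\beta\subseteq\bigcap_m[U_m,F_m]$, and since $x\notin U_\beta$, we get $U_\beta\subseteq A_x$.

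This single argument disposes of both your case $U_\beta\cap A_x=\emptyset$ and the case $A_x\subsetneq U_\beta$. You tried to exclude the latter by an appeal to ``the closure structure'' and Corollary \ref{mesmabase}. That corollary only concerns chains $[U,F_n]$ with fixed first coordinate and says nothing about sets $U_\beta$ not containing $x$. Moreover, $A_x\subseteq U_\beta$ with $x\notin U_\beta$ is not impossible when $A_x\neq\emptyset$: $U_\beta=A_x$ can occur; it simply falls under the case $U_\beta\subseteq A_x$.
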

\begin{proof}
    Since $ \{[U_n,F_n]\}_{n\in \mathbb{N}}$ is cofinal in $\mathcal{V}_x'$, we have $ \displaystyle \bigcap_{n=0}^{\infty}[U_n,F_n] = \bigcap_{A\in \mathcal{V}_x'} A = \{x\}\cup A_x$. Then, let $[W,H]$ be a basic open set from $\mathcal{B}$ containing $x$. For every $\theta \in H$, we can assume that $U_{\theta}\cap [U_n,F_n] \neq \emptyset$ for every $n \in \mathbb{N}$: in this case, an open set $V\in \mathcal{V}_x$ such that $x \in V \subset [W,H\setminus \{\theta\}]$ also satisfies $x \in V \subset [W,H]$. In particular, $U_n \cap U_{\theta} \neq \emptyset$ for every $n \in \mathbb{N}$, so that $U_{\theta}\subsetneq U_n$ for every $n\in\mathbb{N}$ because $\mathcal{C}$ is nested and $x\in U_n\setminus U_{\theta}$. Analogously, for every $\theta \in H$ and every $\alpha \in \displaystyle \bigcup_{n \in \mathbb{N}}F_{n}$, we can assume that $U_{\theta}\cap U_{\alpha} = \emptyset$ or $U_{\alpha}\subsetneq U_{\theta}$. Otherwise, since $\mathcal{C}$ is nested, we would have $U_{\theta}\subseteq U_{\alpha}$ for some $\alpha \in F_n$ and some $n\in \mathbb{N}$. Actually, there would exist, for every $k \geq n$, an index $\alpha'\in F_{k}$ such that $U_{\theta}\subseteq U_{\alpha'}\subseteq U_{\alpha}$. In this case, if $V\in \mathcal{V}_x$ satisfies $x\in V\subset [W,H]$ and has the form $\displaystyle [U_m,F_m]\setminus \bigcup_{U\in\mathcal{F}} U$ for some $m\in \mathbb{N}$, then $V' = \displaystyle [U_{m+n+1}, F_{m+n+1}]\setminus \bigcup_{U\in\mathcal{F}} U$ is an open set from $\mathcal{V}_x$ such that $V'\subset [W,H\setminus \{\theta\}]$.

    Within the above considerations, suppose first that $U_{\alpha}\subsetneq U_{\theta}$ for some $\alpha \in \displaystyle \bigcup_{n\in \mathbb{N}}F_n$ and some $\theta \in H$. We can choose $\alpha$ so that $U_{\alpha}$ is $\subseteq-$maximal satisfying $U_{\alpha}\subsetneq U_{\theta}$, because $\mathcal{C}$ is noetherian. If $n \in \mathbb{N}$ is an index such that $\alpha \in F_n$, then $U_{\alpha}\subsetneq U_{\theta}\subsetneq U_n$. However, if $[\tilde{U}_n,\tilde{F}_n]\in \psi([U_n,F_n])$ is the open set containing $[U_{n+1},F_{n+1}]$, then $\mathcal{K}'[\tilde{U}_n,\tilde{F}_n]$ is constructed following items \ref{item1} or \ref{item2}. In both cases, there is $\alpha'\in F_{n+1}$ such that $U_{\alpha}\subsetneq U_{\alpha'}$. By the choice of $\alpha$, we must have $U_{\theta}\subseteq U_{\alpha'}$, a situation that was discarded by the previous paragraph. Therefore, from now on, we assume that $U_{\theta}\cap U_{\alpha} = \emptyset$ for every $\theta \in H$ and every $\alpha \in \displaystyle \bigcup_{n\in\mathbb{N}}F_n$.

    On the other hand, suppose that $W\subsetneq U_n$ for every $n\in\mathbb{N}$. Then, there must exist $n_0 \in \mathbb{N}$ such that $[W,H]\cap U_{\alpha} \neq \emptyset $ for some $\alpha\in F_{n_0}$: otherwise, $\displaystyle x\in [W,F]\subset \bigcap_{A\in \mathcal{V}_x'} A$, contradicting the uniqueness of the representation $\displaystyle \bigcap_{A\in \mathcal{V}_x'} A = \{x\}\cup A_x$. Then, $U_{\alpha}\subsetneq W$, because $\mathcal{C}$ is a nested family and $x\in W\setminus U_{\alpha}$. Moreover, since $\mathcal{C}$ is noetherian, we assume that $U_{\alpha}$ is a $\subseteq-$maximal element from $\displaystyle \bigcup_{n=1}^{\infty}F_n$ such that $U_{\alpha}\subsetneq W$. Therefore, one of the following cases is verified, but both lead to contradictions:

    \begin{itemize}
        \item Suppose for instance that $U_{n+1}\subsetneq U_n$ for each $n\in \mathbb{N}$. By the $\subseteq-$maximality of $U_{\alpha}$, we have $\alpha \in F_n$ for every $n \geq n_0$. Regarding that $U_{\alpha}\subsetneq W \subsetneq U_n$, the family $\{U_n\}_{n\in\mathbb{N}}$ is cofinal in $\mathring{\lceil U_{\alpha}\rceil}$. After all, for any $n \geq n_0$, the partition $\mathcal{K}'[\tilde{U}_n,\tilde{F}_n]$ was defined according to items \ref{item1} or \ref{item2}, where $[\tilde{U_n},\tilde{F_n}]\in \psi([U_n,F_n])$ is the basic open set containing $[U_{n+1},F_{n+1}]$. Then, we must have $U_n\subseteq W\subsetneq U_n$ for some big enough $n\geq n_0$, which is a contradiction;
        \item If the above item does not hold, we can assume that $n_0\in \mathbb{N}$ is big enough in order to the equality $U_n = U_{n_0}$ be verified for every $n \geq n_0$. In this case, we are under the hypothesis of Corollary \ref{mesmabase}, so that $U_{n_0}$ is the smallest clopen set from $\mathcal{C}$ containing $x$. However, this contradicts the assumption that $x\in W\subsetneq U_{n_0}$.
    \end{itemize}

    To summarize, there must exist $n\in \mathbb{N}$ such that $U_n\subseteq W$. On the other hand, given $\theta \in H$, we remarked in the first two paragraphs that $U_{\theta}\subsetneq U_m$ for every $m\in \mathbb{N}$ and $U_{\theta}\cap U_{\alpha} = \emptyset$ for every $\alpha \in \displaystyle \bigcup_{m \in \mathbb{N}}F_m$. In other words, we proved that $U_{\theta}\subset \displaystyle \bigcap_{m\in\mathbb{N}}[U_m,F_m]$. Hence, $U_{\theta}\subset A_x$ for every $\theta \in H$, because $x\notin U_{\theta}$. Since $\mathcal{K}[A_x]$ is singleton or a disjoint family of elements from $\mathcal{C}$, we observe that $\mathcal{F} = \{B \in \mathcal{K}[A_x]: U_{\theta}\subset B \neq \emptyset \text{ for some }\theta \in H\}$ is finite. Hence, $[U_n,F_n]\setminus \displaystyle \bigcup_{U\in\mathcal{F}} U\in \mathcal{V}_x$ and $\displaystyle x \in [U_n,F_n]\setminus \bigcup_{U\in\mathcal{F}} U\subset [W,H]$, finishing the proof. 
    
\end{proof}

Therefore, combining Proposition \ref{baselocal} and  Lemma \ref{descricao}, we see that $\psi(\mathcal{C})$ is a special clopen subbase for $X$. We will finish the proof that $X$ is the end space of a special tree, or, equivalently, the end space of some graph, via the recent characterization given by Pitz in \cite{pitz}. His main result claims that these spaces are precisely the ones that admit a $\sigma-$disjoint, nested, noetherian and \textit{hereditarily complete} clopen subbase.

In order to introduce this additional property, we recall that a family of sets $\mathcal{S}$ is \textbf{centered} if $\displaystyle \bigcap_{F\in\mathcal{F}}F \neq \emptyset$ for every finite $\mathcal{F}\subset \mathcal{S}$. Then, we say that $\mathcal{S}$ is \textbf{complete} if $\displaystyle \bigcap_{F\in\mathcal{F}}F \neq \emptyset$ for every centered family $\mathcal{F}\subset \mathcal{S}$. Finally, when $\mathcal{S}$ is a subfamily of the power set $\wp(X)$ of the topological space $X$, we call $\mathcal{S}$ \textbf{hereditarily complete} if $\{S\cap Y: S\in \mathcal{S}\}$ is complete for every closed subspace $Y\subset X$. Relying on Proposition \ref{baselocal}, its not hard to conclude that $\psi(\mathcal{C})$ has this property:

\begin{thm}\label{completude}
    The family $\psi(\mathcal{C})$ of the nodes of the tree $T_{\mathcal{C}}$ is hereditarily complete. 
\end{thm}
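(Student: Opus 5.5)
Fix a closed subspace $Y\subseteq X$ and a centered family $\mathcal{F}\subseteq\{S\cap Y : S\in\psi(\mathcal{C})\}$; we must show $\bigcap\mathcal{F}\neq\emptyset$. Write $\mathcal{F}=\{S\cap Y : S\in\mathcal{S}\}$ with $\mathcal{S}\subseteq\psi(\mathcal{C})$. Since $\psi(\mathcal{C})$ is nested and any two members of $\mathcal{S}$ meet (inside $Y$), the family $\mathcal{S}$ is a chain under $\subseteq$. Since nodes of $T_{\mathcal{C}}$ are ordered by $\supseteq$, it is therefore a chain in the tree $T_{\mathcal{C}}$. Let $\mathcal{S}^{\downarrow}$ denote its downward closure in $T_{\mathcal{C}}$.

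\textbf{Case 1: $\mathcal{S}^{\downarrow}$ has a maximal element (in the tree order).} Then $\mathcal{S}$ has a $\subseteq$-smallest element $S_0$. Centeredness gives $S_0\cap Y\neq\emptyset$, and this set is contained in every member of $\mathcal{F}$, so we are done.

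\textbf{Case 2: $\mathcal{S}^{\downarrow}$ is a high-ray $R$ of $T_{\mathcal{C}}$.} Because $T_{\mathcal{C}}$ is special, $R$ has countable cofinality. Pick a cofinal sequence $[U_n,F_n]\in\mathcal{S}$; then $\bigcap\mathcal{S}=\bigcap_n[U_n,F_n]$. As in the construction of the limit nodes, this sequence is a play of $\mathrm{End}_{\mathcal{B}}$ against $\psi$, so $\bigcap_n[U_n,F_n]=\{x\}\cup A$, with $A$ open, $x\notin A$, and the tops of $R$ equal to $\mathcal{K}[A]$.

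We must show $Y\cap(\{x\}\cup A)\neq\emptyset$. Suppose not. Then $x\notin Y$, and $Y$ is closed, so some basic neighbourhood of $x$ misses $Y$. By Proposition~\ref{baselocal}, we may take this neighbourhood in the form $[U_n,F_n]\setminus\bigcup_{U\in\mathcal{F}'}U$ for some $n$ and some finite $\mathcal{F}'\subseteq\mathcal{K}[A]$. Each $U\in\mathcal{F}'$ lies in $A$, and $A\cap Y=\emptyset$ by assumption. Hence
\[
[U_n,F_n]\cap Y\subseteq \Bigl([U_n,F_n]\setminus\bigcup_{U\in\mathcal{F}'}U\Bigr)\cap Y \;\cup\; (A\cap Y)=\emptyset .
\]
This contradicts the centeredness of $\mathcal{F}$, since $[U_n,F_n]\cap Y\in\mathcal{F}$ must be nonempty. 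Therefore $\bigcap\mathcal{F}=Y\cap(\{x\}\cup A)\neq\emptyset$.

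The main obstacle is the first step: justifying that $\mathcal{S}$ is a chain in the tree order of $T_{\mathcal{C}}$ with no gaps. We need that a $\subseteq$-chain of nodes is exactly a tree-chain, that its downward closure is either a path with a last element or a high-ray, and that the limit of a high-ray is captured by the winning condition. These facts follow from the preceding discussion: incomparable nodes are disjoint, nodes are ordered by $\supseteq$, and every increasing $\omega$-sequence of nodes is a legal play against $\psi$. The degenerate case in which $\mathcal{K}[A]$ is a single basic set $A$ rather than a disjoint family is handled identically, since $A\cap Y=\emptyset$ is all that the argument uses.
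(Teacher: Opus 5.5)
Your proposal is correct and follows essentially the same route as the paper: you split according to whether the chain $\mathcal{S}$ has a $\subseteq$-least member, and otherwise use the winning condition to write $\bigcap\mathcal{S}=\{x\}\cup A$ and Proposition~\ref{baselocal} to produce a member of $\mathcal{S}$ that misses $Y$. You spell out the role of $A\cap Y=\emptyset$ in the final inclusion, which the paper leaves terse. Like the paper, you tacitly use that the high-ray $R$ coincides with $\mathcal{V}_x'$; this holds because every node strictly above $R$ lies inside some top in $\mathcal{K}[A]$, and such a top does not contain $x$.
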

\begin{proof}
    Let $Y\subset X$ be a closed subspace and fix $\mathcal{S}_Y\subset \{Y\cap [U,F]: [U,F]\in\psi(\mathcal{C})\}$ a centered family. Then, $(Y\cap [W,H])\cap (Y\cap [V,G]) \neq \emptyset$ for any two open sets $[W,H],[V,G] \in \mathcal{S} = \{[U,F] \in \psi(\mathcal{C}): [U,F] \cap Y \in \mathcal{S}_Y\}$. In particular, since $\psi(\mathcal{C})$ is nested, $\mathcal{S}$ is a chain in $T_{\mathcal{C}}$. If its cofinality is finite, then $\mathcal{S}$ has a $\subseteq-$minimum element $[U,F]$, so that $\displaystyle \bigcap_{S\in \mathcal{S}}S = [U,F]\cap Y \neq \emptyset$.

    Assume now that $\mathcal{S}$ has infinite cofinality in $T$, so that $\displaystyle \bigcap_{S\in \mathcal{S}}S$ is written uniquely as a disjoint union $\{x\}\cup A_x$ for some $x\in X$. For a contradiction, suppose that $(\{x\} \cup A_x)\cap Y = \emptyset$. Therefore, since $X\setminus Y$ is open, by Proposition \ref{baselocal} there are $[U,F]\in \mathcal{S}$ and a finite family $\mathcal{F}\subset \mathcal{K}[A_x]$ such that $\displaystyle x \in [U,F]\setminus \bigcup_{F\in \mathcal{F}}F\subset X\setminus Y$. Hence, $[U,F]\subset X\setminus Y$, because $\displaystyle \bigcup_{F\in \mathcal{F}}F\subset A_x$. This, however, contradicts the definition of $S$. 
\end{proof}

In other words, Theorem \ref{completude} finishes a description of topological properties of a suitable clopen subbase $\psi(\mathcal{C})$ for $X$. More precisely, our study carried out in this section concludes that, if $X$ is a Hausdorff space in which $\mathrm{II}\uparrow \mathrm{End}_{\mathcal{B}}$ for some basis $\mathcal{B}$ generated by a given special clopen subbase $\mathcal{C}$, then there is $\psi(\mathcal{C})$ another special clopen subbase for $X$ that is hereditarily complete. According to Theorem $1.2$ of Pitz in \cite{pitz}, this proves that $X$ is the end space of a special tree. As formalized by Proposition \ref{estrategia}, Player $\mathrm{II}$ also has a winning strategy for the end game in these latter spaces, which, by Theorem $3$ of Kurkofka and Pitz in \cite{representacao}, are precisely the topological spaces that arise as end spaces of graphs. Therefore, Theorem \ref{t3} is established. 

\section{Choquet property}\label{choquetproperty} 
\paragraph{}
In this section, we present some relationships between the End game and the Banach-Mazur game; as an application, we show that every end space is productively Baire. A topological space is Baire if the intersection of any countable collection of open dense subset is dense. If the product $X \times Y$ is Baire, then $X$ and $Y$ must be Baire; however, the converse is not true in general. Indeed, Oxtoby \cite{Oxtoby}, under CH, and Cohen \cite{cohen}, in ZFC, constructed a Baire space with a non-Baire square.

Recall that the Banach-Mazur game on the space $X$ is played by two players, Player I and II, in $\omega$-many innings. At the beginning of the game, Player I chooses a nonempty open set $U_0$ and Player II responds by choosing a nonempty open set $V_0 \subset U_0$. In the $n$-th inning ( $n>0$ ), Player I chooses a nonempty $U_n \subset V_{n-1}$ and Player II responds by choosing a nonempty open set $V_n \subset U_n$, and so on. The Player II wins if $\bigcap_{n \in \omega} V_n \neq \emptyset$.

Banach, Mazur and Oxtoby proved that the space $X$ has the Baire property if and only if Player I does not have a winning strategy in the Banach-Mazur game on $X$, see for example \cite{oxtoby1957banach}. A space $X$ is called Choquet if Player II has a winning strategy in the Banach-Mazur game on $X$. Choquet spaces were introduced in 1975 by White who called them weakly $\alpha$-favorable spaces. Clearly, Choquet spaces are Baire. A Bernstein subset of reals witnesses that Baire spaces need not be Choquet. A productively Baire space is a Baire space whose product with every other Baire space is Baire. It is known that Choquet spaces are productively Baire.

The following propositions show some connection between the Banach-Mazur game and the End game.

\begin{prop}
    Let $X$ be a space that admits a special clopen (sub-)base $\mathcal{C}$. If Player I has a winning strategy in the Banach-Mazur game on $X$. Then Player I has a winning strategy in $End_{\mathcal{C}}(X)$.
\end{prop}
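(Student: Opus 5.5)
The plan is to let Player $\mathrm{I}$ in $\mathrm{End}_{\mathcal{B}}(X)$ simulate a run of the Banach--Mazur game against the given winning strategy. Here $\mathcal{B}$ is the basis generated by $\mathcal{C}$ as in Lemma \ref{descricao}. The goal is a run whose moves $U_n$ satisfy $\bigcap_{n}U_n=\emptyset$. Such a run is automatically a win for Player $\mathrm{I}$, since the empty set cannot be written as $\{x\}\cup A$ with $x\in X$.

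Let $\sigma$ be a winning strategy for Player $\mathrm{I}$ in the Banach--Mazur game on $X$. Since $\mathcal{B}$ is a basis, every nonempty open set contains a nonempty element of $\mathcal{B}$. I will build the two plays together.

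\emph{Round $0$.} Let $W_0=\sigma(\emptyset)$ and let Player $\mathrm{I}$ play a nonempty $U_0\in\mathcal{B}$ with $U_0\subseteq W_0$. Player $\mathrm{II}$ answers with a cover $\mathcal{U}_0$ of $U_0$. Pick some $V_0\in\mathcal{U}_0$ with $V_0\cap U_0\neq\emptyset$, and then a nonempty $V_0'\in\mathcal{B}$ with $V_0'\subseteq V_0\cap U_0$. Feed $V_0'$ to $\sigma$ as the Banach--Mazur move of Player $\mathrm{II}$; this is legal because $V_0'\subseteq U_0\subseteq W_0$.

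\emph{Round $n+1$.} Let $W_{n+1}=\sigma(V_0',\dots,V_n')\subseteq V_n'$. Player $\mathrm{I}$ plays a nonempty $U_{n+1}\in\mathcal{B}$ with $U_{n+1}\subseteq W_{n+1}$. This is a legal End-move, because $U_{n+1}\subseteq V_n'\subseteq V_n\in\mathcal{U}_n$. The next $V_{n+1}$ and $V_{n+1}'$ are chosen from $\psi(U_{n+1})$ exactly as in round $0$. This defines a (history-dependent) strategy for Player $\mathrm{I}$ in $\mathrm{End}_{\mathcal{B}}$.

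For any run of this strategy we have the chain
$$U_{n+1}\subseteq W_{n+1}\subseteq V_n'\subseteq U_n\quad\text{for all } n.$$
Hence $\bigcap_n U_n=\bigcap_n V_n'$. The sequence $W_0,V_0',W_1,V_1',\dots$ is a legal Banach--Mazur run in which Player $\mathrm{I}$ follows $\sigma$, so $\bigcap_n V_n'=\emptyset$. Therefore $\bigcap_n U_n=\emptyset$, and Player $\mathrm{II}$ loses $\mathrm{End}_{\mathcal{B}}$.

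The only delicate point is bookkeeping rather than mathematics. Player $\mathrm{II}$'s cover elements need not lie inside $U_n$, which is why we intersect with $U_n$ and shrink to a basic set before feeding the move to $\sigma$. Also, the strategy obtained for Player $\mathrm{I}$ depends on the whole history, since $\sigma$ does. The paper's convention that strategies are stationary is only imposed for Player $\mathrm{II}$, so I expect this to be acceptable. If a stationary strategy were really required here, I would first try to replace $\sigma$ by a winning strategy for Player $\mathrm{I}$ that depends only on the last move, or else accept the history-dependent version.
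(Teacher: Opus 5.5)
Your argument is correct and is essentially the paper's proof: Player I copies the Banach--Mazur winning strategy $\sigma$ and feeds an element of Player II's cover back to $\sigma$ as the simulated response, so the intersection of Player I's moves equals the intersection along a $\sigma$-run and is therefore empty. Your shrinking to a nonempty basic $V_n'\subseteq V_n\cap U_n$ makes explicit what the paper gets by quoting that the Banach--Mazur game may be restricted to a base, and your history-dependent strategy matches the paper's own use of $\sigma(\langle U_0,\dots,U_n\rangle)$; the stray $\psi(U_{n+1})$ should simply read $\mathcal{U}_{n+1}$, since your construction works against arbitrary covers.
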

\begin{proof}
    It is known that playing the Banach-Mazur game using only elements of a fixed base is equivalent to the whole game \cite{Aurichi-Diaz}. Let $\sigma$ be a winnig strategy for Player I in the Banach-Mazur game on $\mathcal{C}$. Player II chooses $\mathcal{U}_0\subset \mathcal{C}$ that covers $\sigma(\langle\rangle)$. For every $U_0\in \mathcal{U}_0$, $\sigma(\langle U_0\rangle)\subseteq U_0$. Then Player II chooses $\mathcal{U}_{1,\langle U_0\rangle}\subset \mathcal{C}$ that covers $\sigma(\langle U_0\rangle)$ for every $U_0\in \mathcal{U}_0$. For every $U_1\in \mathcal{U}_1$, $\sigma(\langle U_0, U_1\rangle)\subseteq U_1$. Player II chooses $\mathcal{U}_{1,\langle U_0,U_1\rangle}\subset \mathcal{C}$ that covers $\sigma(\langle U_0,U_1\rangle)$ for every $(U_0, U_1)\in \mathcal{U}_0\times\mathcal{U}_1$, and so on for countably many innings. This generates a tree of possible evolution of the game in which Player I wins in every chain in the Banach-Mazur game restricted to the base $\mathcal{C}$. Therefore, for every $(U_n)_{n\in\mathbb{N}} \in \prod_{n\in \mathbb{N}}\mathcal{U}_n$, $\bigcap_{n\in\mathbb{N}} \sigma(\langle U_n : n\in \mathbb{N}\rangle)=\emptyset$, therefore Player I also wins the $End_{\mathcal{C}}(X)$. 
\end{proof}

\begin{prop}
    Let $X$ be a space that admits a special clopen (sub-)base $\mathcal{C}$. If Player II has a winning strategy in $End_{\mathcal{C}}(X)$. Then Player II has a winning strategy in the Banach-Mazur game on $X$.
\end{prop}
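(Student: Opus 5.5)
The plan is to transfer the winning strategy $\psi$ of Player $\mathrm{II}$ in $\mathrm{End}_{\mathcal{B}}(X)$ directly into a strategy for Player $\mathrm{II}$ in the Banach--Mazur game, by running an auxiliary match of the end game in the background. Here $\mathcal{B}$ is the special basis generated by $\mathcal{C}$, as in Lemma \ref{descricao}; if $\mathcal{C}$ is already a basis, take $\mathcal{B}=\mathcal{C}$. The idea is that every move of Player $\mathrm{I}$ in the Banach--Mazur game is shrunk to a basic set, which is fed to $\psi$ as a move of Player $\mathrm{I}$ in the end game. Player $\mathrm{II}$ then answers in the Banach--Mazur game with a nonempty member of the cover returned by $\psi$. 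We do not need the Banach--Mazur strategy to be stationary, since being Choquet only requires some winning strategy.

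Concretely, the strategy is built inning by inning.
\begin{enumerate}
\item At inning $0$, Player $\mathrm{I}$ plays a nonempty open $U_0$. Player $\mathrm{II}$ picks a nonempty basic $W_0\in\mathcal{B}$ with $W_0\subseteq U_0$, which exists since $\mathcal{B}$ is a basis. She regards $W_0$ as the first move of Player $\mathrm{I}$ in $\mathrm{End}_{\mathcal{B}}$ and computes $\psi(W_0)$, a pairwise disjoint cover of $W_0$ by basic sets. Since $W_0\neq\emptyset$, some $V_0\in\psi(W_0)$ is nonempty, and Player $\mathrm{II}$ answers $V_0$. This is legal in Banach--Mazur, because $V_0\subseteq W_0\subseteq U_0$ and $V_0$ is open.
\item At inning $n\ge 1$, Player $\mathrm{I}$ plays a nonempty open $U_n\subseteq V_{n-1}$. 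Player $\mathrm{II}$ picks a nonempty basic $W_n\subseteq U_n$. Since $W_n\subseteq V_{n-1}\in\psi(W_{n-1})$, the set $W_n$ is a legal move of Player $\mathrm{I}$ in the auxiliary end game at round $n$. She then answers with a nonempty $V_n\in\psi(W_n)$.
\end{enumerate}
The choices of $W_n$ and $V_n$ can be made by a fixed choice function, so this defines a strategy, depending on the history of the play.

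To see that the strategy wins, note that $\langle W_0,\psi(W_0),W_1,\psi(W_1),\dots\rangle$ is a match of $\mathrm{End}_{\mathcal{B}}$ in which Player $\mathrm{II}$ follows $\psi$. Hence $\bigcap_n W_n=\{x\}\cup A$ for some point $x$, and in particular this intersection is nonempty. Moreover $W_{n+1}\subseteq V_n\subseteq W_n$ for every $n$, so $\bigcap_n V_n=\bigcap_n W_n\ni x$. Therefore Player $\mathrm{II}$ wins the Banach--Mazur match.

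The only delicate point I expect is verifying that every move is legal. The auxiliary end game must use the generated basis $\mathcal{B}$ rather than the subbase, so that a basic $W_n$ inside an arbitrary open $U_n$ always exists. We must also always select nonempty members of the covers $\psi(W_n)$, which may contain empty sets; this is possible because each $W_n$ is nonempty. Everything else follows directly from the winning condition of the end game, which guarantees at least one point in the intersection.
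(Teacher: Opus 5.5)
Your proposal is correct and follows the paper's argument. Both run an auxiliary match of the end game against the winning strategy and let Player II answer in Banach--Mazur with a member of the cover returned by that strategy, so that $\bigcap_n V_n=\bigcap_n W_n$ is nonempty by the end-game winning condition. The only difference is cosmetic: you shrink each Banach--Mazur move to a basic set explicitly, whereas the paper cites the equivalence of the Banach--Mazur game with its version restricted to a base.
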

\begin{proof}
    Let $\sigma$ be a winning strategy for Player II in $End_{\mathcal{C}}(X)$. Let $A_0$ be the first choice of Player I in the Banach-Mazur game on $X$ played by picking only elements of $\mathcal{C}$. In the first inning of the Banach-Mazur game restricted on $\mathcal{C}$ Player I picks $U_0\in \mathcal{C}$. Then Player I chooses an element $V_0\in \sigma(\langle U_0\rangle)$. This $V_0$ is the first move of Player II in the Banach-Mazur game on $X$. Then Player chooses $U_1\subseteq V_0$ and also $V_1\in \sigma(\langle U_0,U_1\rangle)$. The subset $V_1$ is the second move of Player II in the Banach-Mazur game on $X$, and so on. Since $\sigma$ is a winning strategy for Player II in the $End_{\mathcal{C}}(X)$, $\bigcap_{n\in\mathbb{N}}V_n\not=\emptyset$. Therefore, Player II has a winning strategy in the Banach-Mazur on $X$. 
\end{proof}

\begin{question}
    Does there exist an example of a space $X$ with a special clopen (sub-)base $\mathcal{C}$,in which Player II has a winning strategy in the Banach-Mazur game and has not a winning strategy in the $End_{\mathcal{C}}(X)$?
\end{question}

\begin{cor}\label{baire}
    Every end space is Choquet, hence productively Baire.
\end{cor}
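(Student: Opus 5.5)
The plan is to chain results already in the paper. Let $X = \Omega(G)$ be an end space. By the Kurkofka--Pitz representation theorem, $X$ is homeomorphic to $\mathcal{R}(T)$ for some special tree $T$. Being Choquet is a topological property, and so is being (productively) Baire. Hence we may assume $X = \mathcal{R}(T)$.

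By Lemma \ref{52}, $\mathcal{C} = \{[t,\emptyset]\}_{t\in T}$ is a special clopen subbase of $\mathcal{R}(T)$. By Proposition \ref{estrategia}, Player $\mathrm{II}$ has a winning stationary strategy in the end game played on the special basis $\mathcal{B}$ generated by $\mathcal{C}$.

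We then apply the preceding proposition, which converts a winning strategy for $\mathrm{II}$ in the end game into one in the Banach--Mazur game. One point must be checked: that proposition is phrased with $End_{\mathcal{C}}(X)$ for a special clopen (sub-)base, while Proposition \ref{estrategia} uses the basis $\mathcal{B}$. The simulation argument only uses that the responses of $\mathrm{II}$ are covers by nonempty basic open sets contained in Player I's move. So I will rerun it with $\mathcal{B}$ directly. Given Player I's Banach--Mazur move $O_n$ (nonempty open), Player II picks a basic set $U_n \in \mathcal{B}$ with $U_n \subseteq O_n$, contained in the previously chosen piece. She then feeds $U_n$ to $\psi$ and answers with a nonempty member $V_n \in \psi(U_n)$ inside which the game continues.

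The decreasing sequence $U_0 \supseteq V_0 \supseteq U_1 \supseteq V_1 \supseteq \cdots$ is then a legal end-game play consistent with $\psi$. Its intersection is therefore $\{x\}\cup A$ for some point $x$, hence nonempty. Since $\bigcap_n V_n = \bigcap_n U_n$, Player II wins the Banach--Mazur game. This strategy is a genuine strategy, since it depends on the history, which is allowed. So $X$ is Choquet.

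Finally, we invoke the standard facts recalled above: Choquet spaces are Baire, and Choquet spaces are productively Baire, i.e. the product of a Choquet space with any Baire space is Baire. This gives the corollary.

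The only delicate step is the bookkeeping in the simulation. Player II must choose $V_n$ to be a nonempty member of the cover $\psi(U_n)$ so that the next move of Player I lies in one member, as the end-game rules demand. Empty members can simply be ignored, since $U_n \neq \emptyset$ guarantees a nonempty piece exists. Everything else is a direct citation.
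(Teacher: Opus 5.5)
Your proof is correct and follows the paper's route. The paper cites Theorem~\ref{t3}, whose forward direction is exactly the Kurkofka--Pitz representation combined with Proposition~\ref{estrategia}, then uses the preceding proposition to turn Player~$\mathrm{II}$'s winning end-game strategy into a Banach--Mazur winning strategy, and finishes with the standard fact that Choquet spaces are productively Baire; your rerunning of that simulation directly on the special basis $\mathcal{B}$ (with $\bigcap_n V_n=\bigcap_n U_n\ni x$) only makes explicit bookkeeping the paper leaves implicit.
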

\begin{proof}
    Let $\mathcal{C}$ be the special base on $X$. By  Theorem \ref{t3}, Player II has a winning strategy in the $End_{\mathcal{C}}(X)$,  then Player II has also a winning strategy in the Banach-Mazur game on $X$. Therefore $X$ is productively Baire.
\end{proof}

\begin{thm}\label{t4}
   Ray and Branch spaces are Choquet, hence productively Baire.
\end{thm}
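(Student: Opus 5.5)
The plan is to handle the two cases separately, since they rest on different facts. In each case we show that Player II wins the Banach--Mazur game; the productively Baire conclusion then follows because Choquet spaces are productively Baire.

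\emph{Ray spaces.} By the Kurkofka--Pitz representation theorem, the ray space of a special tree is an end space. Corollary~\ref{baire} then gives the claim at once. For ray spaces of arbitrary order trees (not necessarily special), we would not pass through Theorem~\ref{t3}. Instead we give a direct Banach--Mazur strategy for Player II on the basis of sets $[t,F]$.

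Given a move $[t,F]$ of Player I, Player II answers with $[s,F]$, where $s$ is a successor of $t$ lying in some ray of $[t,F]$; this is possible because rays have no maximal element. Let the resulting nodes be $s_0<s_1<\cdots$. Their downward closure $r$ is a chain without maximum, hence a ray. Since $r$ contains every $s_n$, it lies in every $[s_n,\cdot]$. The point to check is that $r$ avoids the forbidden tops. Each forbidden $t'\in F_n$ is a top of some ray through $s_n$ and lies strictly above $s_n$, so one needs $t'\notin r$.

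This is where the choice of successor matters. We choose $s_{n+1}$ incomparable with, or below, every element of the finitely many forbidden tops accumulated so far that sit above $s_n$; equivalently, we go up one step along a ray of Player I's set. If a forbidden $t'$ were in $r$, then $t'\le s_k$ for some $k$, but forbidden tops always lie strictly above the current node. Hence $r$ survives, and $\bigcap_n V_n\ni r$ is nonempty.

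\emph{Branch spaces.} Here we would argue similarly in $\mathcal{B}(T)\subset\mathcal{P}(T)$. Basic open sets have the form $\{b: t\in b,\ t'\notin b \text{ for } t'\in F\}$ for finite $F$. Player II answers a nonempty basic set by choosing some branch $b$ in it, then a node $s\in b$ above $t$ and above the heights relevant to $F$, and playing the basic set determined by $s$ together with the inherited exclusions; this set still contains $b$.

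Along a play, the chosen nodes form a chain. Its downward closure extends, by Zorn's lemma, to a maximal chain $b^*$. The main obstacle is to ensure that $b^*$ does not contain any excluded node $t'$. To handle this, Player II should at each step pick $s$ above every excluded node that is comparable with the current branch, or else certify that it is incomparable. This can be done because $F$ is finite and $b$ already avoids $F$: for each $t'\in F$ there is a node of $b$ incomparable with $t'$, or $b$ ends below $t'$. In the latter case we must still keep $t'$ out of the extension $b^*$. We would handle this by choosing $b^*$ to extend, in addition, one fixed branch through the eventual chain's limit. If this fails, the fallback is to restrict to pruned trees, where all branches have full height and the issue disappears.
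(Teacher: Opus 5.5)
Your ray-space argument is correct and takes a different, more elementary route than the paper. The paper only quotes Pitz's result that ray and branch spaces carry a complete special base, together with the fact that the Banach--Mazur game may be played on a base. Two small repairs are needed in your version:
\begin{itemize}
\item To get $s_0<s_1<\cdots$, take $s_{n+1}$ to be the successor of $\max(t_{n+1},s_n)$ on the chosen ray, or normalize Player I's move so that $t_{n+1}\ge s_n$.
\item The invariant is not that forbidden nodes lie ``strictly above'' the current node, since they may be incomparable with it. The correct invariant is that no $t'\in F_n$ satisfies $t'\le s_k$. This holds because for $k\ge n$ the node $s_k$ lies on a downward closed ray in $[s_k,F_k]\subseteq[s_n,F_n]$.
\end{itemize}

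The branch-space half, however, is not a proof. You leave the case ``$b$ ends below $t'$'' unresolved, suggest an unspecified ``fixed branch through the eventual chain's limit'', and fall back on pruned trees.

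The missing observation is that this case never occurs. If $b$ is a maximal chain and $t'\notin b$, then some $u\in b$ is incomparable with $t'$, since otherwise $b\cup\{t'\}$ would be a chain. So, given Player I's set $\{b: t\in b,\ t'\notin b \text{ for } t'\in F\}$ and a branch $b$ in it, Player II should pick $s\in b$ above $t$, above the previously chosen node, and above the finitely many witnesses $u$. Player II then answers with $\{b': s\in b'\}$, dropping all exclusions. This set contains $b$ and lies inside Player I's set, because every branch through $s$ is downward closed, hence contains $t$ and a node incomparable with each $t'\in F$. Zorn's lemma then extends the chosen nodes to a branch lying in every move of Player II.

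Cutting off the exclusions in this way is essential. Let $T$ be an $\omega$-chain with pairwise incomparable tops $c_0,c_1,\dots$, and let Player I play $\{b: n\in b,\ c_0,\dots,c_n\notin b\}$ in round $n$. Suppose Player II answers by moving only to finite levels and keeping the inherited exclusions. Every answer is legal and nonempty, yet the final intersection is empty: each branch is $\omega\cup\{c_k\}$ for some $k$ and is eventually excluded. So no ``fixed branch through the limit'' rescues your argument. Moreover, all branches of this tree have the same length $\omega+1$, so restricting to pruned trees does not make the issue disappear either.
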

\begin{proof}
    In \cite{pitz} it is shown that both ray and branch spaces have a complete special base. It is known that the Banach-Mazur game, played only choosing elements from a specific base of the space, is equivalent to the Banach-Mazur game on the space (see, for instance, \cite{Aurichi-Diaz}). Therefore, it is straightforward to see that Player II has a winning strategy on the Banach-Mazur game on every ray and branch space. 
\end{proof}

\section{Subspaces}
\label{subspaces}
\paragraph{}
In the following, we try to study how the subspace relation ask $A \hookrightarrow X$, where $X$ is either a branch or end-space, translates into the combinatorial realm. We must firstly find which sub-spaces of end-spaces are end-spaces themselves. The main result of this section claims that every $G_{\delta}$ subspace of an end space is also an end space; we obtain this as an application of Theorem \ref{t3}.

If the end-space of a graph $G$ is metrizable, it will be by Theorem 3.1 of Pitz in \cite{pitz} in fact completely ultra-metrizable. For trees of this height, a natural way of defining a graph structure on $T$ is available. We shall denote such graph still with $T$ and, in fact, it is easy to see that in this case $\Omega(T)$ will coincide with all of the above spaces. We state the following theorem one can find in \cite{kechris2012classical}:

\begin{thm}[Lavrentiev]
    The completely metrizable sub-spaces of a completely metrizable space are exactly its $G_\delta$ subspaces
    , i.e. countable intersections of open sets.
\end{thm}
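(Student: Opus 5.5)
We prove the two inclusions separately. Throughout, let $(X,d)$ be a complete metric space and $A\subset X$.

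\textbf{$G_\delta$ subspaces are completely metrizable.} First take $U\subset X$ open and proper. Define $f\colon U\to X\times\mathbb{R}$ by
$$f(x)=\bigl(x,\,1/d(x,X\setminus U)\bigr).$$
This is a homeomorphism onto its image. The image is closed in $X\times\mathbb{R}$: if $f(x_k)\to(y,t)$, then $1/d(x_k,X\setminus U)$ stays bounded. Hence $d(y,X\setminus U)>0$, so $y\in U$ and $(y,t)=f(y)$. A closed subspace of the complete space $X\times\mathbb{R}$ is complete, so $U$ is completely metrizable.

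Now let $A=\bigcap_n U_n$ with each $U_n$ open. Consider the diagonal map $A\to\prod_n U_n$, $x\mapsto(x,x,\dots)$. Its image is closed in $\prod_n U_n$, because it is the intersection of that product with the closed diagonal of $X^{\mathbb{N}}$. A countable product of completely metrizable spaces is completely metrizable, so $A$ is completely metrizable.

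\textbf{Completely metrizable subspaces are $G_\delta$.} Let $\rho$ be a complete metric on $A$ compatible with the subspace topology. For each $n$ define
$$W_n=\bigcup\{V\subset X \text{ open} : V\cap A\neq\emptyset,\ \mathrm{diam}_\rho(V\cap A)<1/n\}.$$
Each $W_n$ is open. Every $a\in A$ lies in every $W_n$, because the $\rho$-ball of radius $1/(2n)$ around $a$ is of the form $V\cap A$ for some open $V\subset X$. Since $\overline{A}$ is closed in the metric space $X$, it is a $G_\delta$. So it suffices to show
$$A=\overline{A}\cap\bigcap_n W_n.$$

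Take $x\in\overline{A}\cap\bigcap_n W_n$. For each $n$ choose an open $V_n\ni x$ witnessing $x\in W_n$. Replace $V_n$ by $V_1\cap\dots\cap V_n\cap B_d(x,1/n)$; this keeps the sets open, shrinks $\mathrm{diam}_\rho(V_n\cap A)$, and makes the sequence decreasing. Because $x\in\overline{A}$, each $V_n\cap A$ is nonempty, so we may pick $a_n\in V_n\cap A$.

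The sequence $(a_n)$ is $\rho$-Cauchy, since $a_m\in V_n\cap A$ for all $m\ge n$ and this set has $\rho$-diameter less than $1/n$. By completeness of $\rho$, $a_n\to a$ for some $a\in A$. Convergence in $\rho$ is convergence in the subspace topology of $A$, so $a_n\to a$ in $X$ as well. On the other hand $d(a_n,x)<1/n$, so $a_n\to x$ in $X$. Since $X$ is Hausdorff, $x=a\in A$.

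The main obstacle is this last step. Both the nestedness of the $V_n$ (to get a Cauchy sequence) and the extra $d$-ball condition (to make the limit in $X$ equal to $x$) have to be built into the choice of the $V_n$. This is exactly what the intersection-and-shrinking device above provides.
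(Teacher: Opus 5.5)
The paper gives no proof of this statement; it only quotes it from Kechris. Your argument is correct, and it is essentially the standard textbook proof, organized slightly differently.

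\textbf{$G_\delta$ implies completely metrizable.} The usual presentation writes down an explicit complete metric on $A$ built from $d$ and the functions $x\mapsto 1/d(x,X\setminus U_n)$. In the open case this is $d(x,y)+|1/d(x,F)-1/d(y,F)|$ with $F=X\setminus U$. That metric is just the one pulled back along your closed embeddings $x\mapsto(x,1/d(x,X\setminus U))$ and $x\mapsto(x,x,\dots)$. So you trade checking that an ad hoc metric is compatible and complete for checking that an image is closed.

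\textbf{The converse.} The usual route applies the extension theorem for continuous maps into completely metrizable spaces to $\mathrm{id}_A$. The $G_\delta$ set this produces is the zero-oscillation set $\{x\in\overline{A} : \inf\{\mathrm{diam}_\rho(V\cap A) : V\ni x \text{ open}\}=0\}$, which is exactly your $\overline{A}\cap\bigcap_n W_n$. Your Cauchy-sequence argument is that extension argument specialized to the identity. The general theorem buys more, since it also underlies Lavrentiev's homeomorphism-extension theorem. Your direct version is shorter for this purpose.

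\textbf{One small slip.} A $\rho$-ball of radius $1/(2n)$ has $\rho$-diameter at most $1/n$, not necessarily less than $1/n$; an open interval in $\mathbb{R}$ already has diameter exactly twice its radius. So this ball need not witness $a\in W_n$ under your strict inequality. Use radius $1/(3n)$, or define $W_n$ with $\le 1/n$; nothing else in the argument changes.
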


That, together with the previous results, gives us

\begin{prop}
\label{metcharacterization}
    The sub-spaces of metrizable end-spaces $\Omega(G)$ that are end-spaces are precisely its $G_\delta$ sub-spaces.
\end{prop}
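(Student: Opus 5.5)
The proof has two directions, and in both we use the facts that a metrizable end space is completely metrizable (Theorem 3.1 of Pitz in \cite{pitz}) and that every metrizable end space is the end space of a tree, hence of a graph. We then transport complete metrizability between subspaces via Lavrentiev's theorem. So the plan is to show that, inside the class of metrizable spaces, being an end space is equivalent to being completely ultrametrizable. Once this is in hand, the statement reduces to Lavrentiev's theorem applied inside the completely metrizable space $\Omega(G)$.

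Suppose first that $A\subset \Omega(G)$ is a subspace that is itself homeomorphic to some end space $\Omega(H)$. Being a subspace of a metrizable space, $A$ is metrizable, so $\Omega(H)$ is a metrizable end space. By Pitz's Theorem 3.1 it is completely (ultra)metrizable, and therefore so is $A$. Since $\Omega(G)$ is completely metrizable, Lavrentiev's theorem shows that $A$ is a $G_\delta$ subset of $\Omega(G)$.

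Conversely, let $A\subset\Omega(G)$ be $G_\delta$. By Lavrentiev's theorem, $A$ is completely metrizable. It is also zero-dimensional: it inherits the clopen base of $\Omega(G)$, and a metrizable space with a clopen base has $\dim=0$, so it admits a compatible complete ultrametric. Here lies the \textbf{main obstacle}: we must show that every completely ultrametrizable space is an end space. I would build the tree directly. Fix a complete ultrametric $d$ on $A$ and let $T$ consist of the balls of radius $2^{-n}$, for $n\in\mathbb{N}$, ordered by reverse inclusion. This is a tree of height $\omega$, hence special, since each level is an antichain. Its rays are the decreasing sequences of balls with radii tending to zero. By completeness each such ray determines exactly one point, and conversely each point determines exactly one ray. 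With no limit nodes there are no tops, so the basic sets $[t,\emptyset]$ are exactly the balls, and the bijection is a homeomorphism. Viewing $T$ as a graph (a tree of height $\omega$, as remarked before Lavrentiev's theorem), we get $\Omega(T)\cong\mathcal{R}(T)\cong A$.

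Alternatively, for this direction one can invoke the later Theorem \ref{generalcasegdelta} directly, since it covers arbitrary $G_\delta$ subsets. In that case only the forward direction needs the metrizable argument above, and the step to check carefully is that Pitz's result indeed gives complete metrizability of $\Omega(H)$ in the metrizable case.
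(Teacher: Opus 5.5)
This is essentially the paper's proof. The forward direction (Pitz's Theorem 3.1, then Lavrentiev) is identical. The converse follows the same chain: $G_\delta\Rightarrow$ completely metrizable $\Rightarrow$ complete ultrametric $\Rightarrow$ ray space of a tree of height $\omega$. The only difference is that you build the tree of balls by hand, where the paper cites Brian's Proposition 2.1.

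Two details of your converse need fixing, though neither affects the strategy.

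First, the claim ``a metrizable space with a clopen base has $\dim=0$'' is false in general. Roy's metric space has $\mathrm{ind}=0$ but $\dim=1$, so it admits no compatible ultrametric. Metrizable end spaces need not be separable, so you cannot fall back on the separable case. Argue instead that $\Omega(G)$ itself is ultrametrizable, which is what Pitz's result gives. The restricted ultrametric then makes $A$ ultrametrizable, i.e.\ $\dim A=0$. Only after that does the theorem producing a compatible complete ultrametric apply.

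Second, take the nodes of $T$ to be pairs $(B,n)$ with $B$ a ball of radius $2^{-n}$, ordered by $(B,n)\le(B',m)$ iff $n\le m$ and $B'\subseteq B$. With bare sets ordered by $\supseteq$, a set that is a ball for several radii collapses to a single node. Worse, an isolated point lies in only finitely many distinct balls, so it corresponds to no ray at all. For a uniformly discrete space every ball is a singleton (or the chains have length at most two), and $T$ would have no rays. With the indexed tree, your verification that rays correspond bijectively and homeomorphically to points of $A$ goes through as you wrote it.
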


\begin{proof}
    A $G_\delta$ set $A \subset \Omega(G)$ is by Lavrentiev's theorem completely metrizable and zero-dimensional, since this is a hereditary property. By Corollary 5 of \cite{ultrametrization} any zero-dimensional complete metrizable space $(X,d)$ admits an equivalent complete ultra-metric $d'$, and by  Proposition 2.1 of \cite{brian} will be $A \approx \Omega(T_G) = \mathcal{R}(T_G) = \mathcal{B}(T_G)$ for some pruned tree $T_G$ of height $\omega$.

    Now let $A \subset \Omega(G)$ be an end-space $A \approx \Omega(\tilde{G})$ for some $\tilde{G}$, then it is a metrizable end-space. By Theorem 3.1 of Pitz in \cite{pitz} $A$ must be in particular completely metrizable, and again by Lavrentiev's theorem $A$ must be a $G_\delta$ set.
\end{proof}

\begin{cor}
    Bernstein subsets $B \subset \mathcal{C}$ are Baire but not end-spaces.
\end{cor}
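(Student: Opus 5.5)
The plan is to argue by contradiction, combining Proposition \ref{metcharacterization} with the definition of a Bernstein set. Here $B$ is a subset of the Cantor space $2^\omega$ (the symbol $\mathcal{C}$ in the statement) such that both $B$ and its complement meet every uncountable closed subset of $2^\omega$. There are two claims to prove: that $B$ is Baire, and that $B$ is not an end space.

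\emph{Not an end space.} First I will recall that $2^\omega$ is itself a metrizable end space, namely the end space of the binary tree viewed as a graph. Equivalently, it is $\mathcal{R}(T)$ for $T = 2^{<\omega}$. By Proposition \ref{metcharacterization}, if $B$ were an end space, then $B$ would be a $G_\delta$ subset of $2^\omega$. This is impossible. The complement $2^\omega\setminus B$ would then be an uncountable $F_\sigma$ set; it is uncountable because it meets every uncountable closed set, and there are continuum many disjoint copies of the Cantor set. An uncountable $F_\sigma$ set contains an uncountable closed set $K$, since one of its countably many closed pieces must be uncountable. But $B$ meets $K$ by the Bernstein property, which contradicts $K \subset 2^\omega\setminus B$. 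So $B$ is not an end space.

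\emph{Baire.} This is the more delicate step, and I will follow the standard argument. Let $\{D_n\}$ be open dense subsets of $B$ and let $W$ be a nonempty open subset of $B$. Write $D_n = O_n\cap B$ with $O_n$ open in $2^\omega$. Since $B$ is dense in $2^\omega$ (it meets every clopen set, each of which is an uncountable closed set), each $O_n$ is dense in $2^\omega$. Likewise $W = W'\cap B$ with $W'$ a nonempty open subset of $2^\omega$.

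The set $G = W'\cap\bigcap_n O_n$ is a dense $G_\delta$ subset of $W'$, hence a nonempty Polish space with no isolated points. It therefore contains a homeomorphic copy of the Cantor set, which is an uncountable closed subset of $2^\omega$. By the Bernstein property this copy meets $B$, so $W\cap\bigcap_n D_n\neq\emptyset$, and $B$ is Baire.

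The main point to get right is the perfect-set step, namely that a nonempty perfect Polish space contains a Cantor set. This is classical (see \cite{kechris2012classical}), so I will simply invoke it.
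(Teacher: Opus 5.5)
Your proof is correct and follows the paper's route. The paper reads this corollary off Proposition \ref{metcharacterization}, taking as known the classical facts that a Bernstein set is not $G_\delta$ and is Baire; you prove both facts in full rather than citing them (the paper also notes an alternative via Corollary \ref{baire}, since end spaces are Choquet while Bernstein sets are not).
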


The above can also be concluded using Corollary \ref{baire}, since Bernstein sets are not Choquet. The characterization suggests we should look into $G_\delta$ sets. The following lemma is obtained as an application of Theorem \ref{t3}, combined with Theorem 1 of Kurkofka, Melcher and Pitz in \cite{approximatinginfinitegraphsbynormaltrees}:

\begin{thm}[\cite{approximatinginfinitegraphsbynormaltrees}]
\label{normalapprox}
    Every open covering can be refined by a pairwise disjoint clopen partition of basics $\{\Omega(C)\}_{C \in \mathrm{S}_{G,T}}$ where $T$ is normal and rayless. In particular, end-spaces are ultra-paracompact.
\end{thm}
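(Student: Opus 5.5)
The plan is to build, by transfinite recursion inside $G$, a rayless normal tree $T$ whose complementary components are exactly where the recursion stops. The cover is then refined by $\{\Omega(C)\}$, where $C$ ranges over the components of $G-T$; this family is what we denote $\mathrm{S}_{G,T}$. Fix an open cover $\mathcal{U}$ of $\Omega(G)$. For each end $\varepsilon$ choose a finite $X_\varepsilon\subset V(G)$ and $U_\varepsilon\in\mathcal{U}$ with $\Omega(X_\varepsilon,C(X_\varepsilon,\varepsilon))\subseteq U_\varepsilon$. We may assume $G$ is connected, treating components separately.

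\emph{Construction.} We grow a normal tree $T$ (rooted in a vertex $r$) while maintaining a family of ``active'' components $D$ of $G-T$. Each active $D$ has $N(D)$ contained in the down-closure $\lceil t_D\rceil$ of a single node $t_D$, and this chain is finite. Initially $T=\{r\}$ and the components of $G-r$ are active. Given an active component $D$, we first test whether $\Omega(D)$, i.e. the set of ends with rays in $D$, is contained in some member of $\mathcal{U}$. If so, $D$ becomes \emph{terminal} and we stop there. Otherwise we pick a vertex $v\in D$ adjacent to $t_D$ and add it to $T$ as a successor of $t_D$. This keeps $T$ normal, because every component of $D-v$ has its neighbourhood inside the chain $\lceil v\rceil$. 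The components of $D-v$ then become active. At limit stages we take unions; since every chain of $T$ produced this way is indexed by $\omega$-steps, a node only ever has finite height unless a ray appears, which is what we rule out next.

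\emph{Raylessness (main obstacle).} Suppose the recursion produced a normal ray $R=t_0t_1t_2\dots$ in $T$, and let $D_n$ be the active component of $G-\lceil t_n\rceil$ containing $t_{n+1}$. Then $R$ belongs to some end $\varepsilon$; put $X=X_\varepsilon$. The key claim is that $D_n\subseteq C(X,\varepsilon)$ for large $n$. Each $x\in X$ either lies on $\lceil R\rceil$, hence in $\lceil t_n\rceil$ for large $n$, or lies off $R$. In the second case, normality forces $x$ to be separated from the tail of $R$ by some $\lceil t_n\rceil$. Indeed, otherwise $x$ would lie in every $D_n$, and since $x$ is never chosen, some $D_n$ would have to end up containing both $x$ and a vertex attached to $x$. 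More carefully, one uses $\bigcap_n D_n=\emptyset$: every vertex of $D_n$ adjacent to $t_n$ is a candidate, and the recursion can be organised, e.g.\ by a fixed well-ordering of $V(G)$ with each new vertex chosen least among the vertices of $D$ that can be added as a successor of $t_D$ preserving normality, so that every vertex is eventually either added or cut off. This bookkeeping is the delicate point, and I would first try this least-vertex rule to guarantee $\bigcap_n D_n=\emptyset$. Granting the claim, $D_n$ is connected and disjoint from $X$, and it contains a tail of $R$, so $D_n\subseteq C(X,\varepsilon)$. Hence $\Omega(D_n)\subseteq\Omega(X,C(X,\varepsilon))\subseteq U_\varepsilon$. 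Then $D_n$ would have been declared terminal, contradicting $t_{n+1}\in D_n$. So $T$ is rayless, all its chains are finite, and the recursion ends with every component of $G-T$ terminal.

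\emph{Conclusion.} Every end $\varepsilon$ has a ray with a tail in a component of $G-T$. If a ray met $T$ infinitely often, the standard argument for normal trees would produce a normal ray in $T$, which is impossible. So the sets $\Omega(C)$, for $C$ a component of $G-T$, partition $\Omega(G)$. Each $\Omega(C)=\Omega(N(C),C)$ is a basic open set, and $N(C)$ is finite because it lies in a finite chain. Distinct components give disjoint sets, so the complement of each $\Omega(C)$ is a union of such basic open sets, and each $\Omega(C)$ is clopen. By terminality, each $\Omega(C)$ lies in a member of $\mathcal{U}$. Thus every open cover has a disjoint clopen refinement by basics, and end spaces are ultra-paracompact. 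For an abstract space $X\approx\Omega(G)$ given by Theorem~\ref{t3}, one transports this through the homeomorphism.
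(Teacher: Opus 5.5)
Your proposal has a genuine gap, and it sits exactly at the step you flagged; it is a missing idea, not bookkeeping. The raylessness argument needs every $x\in X_\varepsilon\setminus V(T)$ to leave $D_n$ eventually. Normality gives nothing of the sort: a vertex outside $T$ may lie in a component of $G-T$ with infinitely many neighbours on $R$, and then it lies in every $D_n$.

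Your least-vertex rule does not prevent this. Take a ladder with rails $R=r_0r_1r_2\dots$ and $Q=q_0q_1q_2\dots$ and rungs $r_iq_i$, and attach a further ray $P$ at $q_0$. Then $\Omega(G)=\{\varepsilon,\pi\}$ is discrete, since $q_0$ separates the two ends. So take $\mathcal{U}=\{\{\varepsilon\},\{\pi\}\}$ and $X_\varepsilon=\{q_0\}$. Root at $r_0$ and enumerate $V(G)$ in type $\omega$ with $r_{n+1}$ before $q_n$, e.g.\ $r_0,r_1,q_0,p_0,r_2,q_1,p_1,\dots$. The only candidates at $t_n=r_n$ are $r_{n+1}$ and $q_n$, so the rule always picks $r_{n+1}$. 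Every $D_n=G-\{r_0,\dots,r_n\}$ is connected with $\Omega(D_n)=\{\varepsilon,\pi\}$, so nothing ever becomes terminal. The recursion produces the ray $T=R$, and $Q\cup P\subseteq\bigcap_n D_n$. In particular, your assertion that normality separates $x$ from the tail of $R$ by some $\lceil t_n\rceil$ is false here for $x=q_0$.

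Jung's device does repair the countable case: always attach to $t_D$ a path through $D$ ending at the least vertex of $D$. If $x$ lay in all the components processed along $R$, then the least vertices of those components would be pairwise distinct, would all be added to $T$, and would all precede $x$. With an enumeration of type $\omega$ this is impossible, so $\bigcap_n D_n=\emptyset$ and your argument goes through. But this covers only countable graphs, whose end spaces are metrizable anyway.

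In any well-ordering of an uncountable vertex set, some vertices have infinitely many predecessors, and the pigeonhole argument disappears. Nothing in your construction then controls components of $G-T$ that attach to infinitely many vertices of a ray of $T$. Equivalently, nothing forces the sets $\Omega(D_n)$ to shrink into a member of $\mathcal{U}$ around the end of $R$. Handling this for arbitrary graphs is the real content of the statement. The paper does not prove it; it imports it as Theorem~1 of Kurkofka, Melcher and Pitz.

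The rest of your argument is fine once raylessness is known: the reduction to connected $G$, the normality of the extensions, and the conclusion that the $\Omega(C)$ form a clopen partition by basic sets refining $\mathcal{U}$. As written, though, the proposal proves the statement only for countable graphs.
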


\begin{lemma}\label{endlemma}
    Let $X$ be an end space and $\mathcal{B}=\{\Omega(C)\}_{C \in \mathrm{S}_{G,T}}$ be the usual base of $X$. The following statements about the game $End$ are equivalent:
    \begin{enumerate}
        \item Player II has a stationary winning strategy in $End_{\mathcal{B}}(X)$;
        \item Player II has a stationary winning strategy in $End(X)$, where $End(X)$ is the End game without the restriction of playing open sets from a fixed base.
    \end{enumerate}
\end{lemma}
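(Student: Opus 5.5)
We prove both implications. The direction (2) $\Rightarrow$ (1) is essentially a restriction argument, and (1) $\Rightarrow$ (2) is a refinement argument using \cref{normalapprox}.

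For (2) $\Rightarrow$ (1), let $\sigma$ be a stationary winning strategy for Player $\mathrm{II}$ in $End(X)$. Given a basic $U\in\mathcal{B}$ played by Player $\mathrm{I}$, the answer $\sigma(U)$ is a cover of $U$ by pairwise disjoint open sets, but these need not lie in $\mathcal{B}$. By \cref{normalapprox}, applied to the end space $X$ and the open cover $\sigma(U)\cup\{X\setminus U\}$ (note $U$ is clopen), there is a refinement by pairwise disjoint basics $\Omega(C)$. Let $\psi(U)$ be the subfamily of those basics contained in $U$; it is a disjoint cover of $U$ by elements of $\mathcal{B}$, each inside a single member of $\sigma(U)$.

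Now take any match $\langle U_0,\psi(U_0),U_1,\dots\rangle$ in $End_{\mathcal{B}}(X)$. Each $U_{n+1}$ lies in some $V\in\psi(U_n)$, hence in some member of $\sigma(U_n)$. So the same sequence $(U_n)$ is a legal sequence of moves for Player $\mathrm{I}$ against $\sigma$ in $End(X)$, and its intersection has the required form $\{x\}\cup A$ with $A$ open and $x$ unique. Thus $\psi$ is winning. The only thing to check is that the winning condition depends only on $\bigcap U_n$, which it does.

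For (1) $\Rightarrow$ (2), let $\psi$ be stationary winning in $End_{\mathcal{B}}(X)$. Player $\mathrm{I}$ may now play an arbitrary open $U$, possibly non-clopen. Since $X$ is ultra-paracompact by \cref{normalapprox}, we partition $U$ into disjoint basics $\{B_i\}_{i\in I(U)}$ and answer with $\sigma(U):=\bigcup_i\psi(B_i)$, which is a disjoint cover of $U$ by basic sets.

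Consider a match with moves $U_0\supseteq U_1\supseteq\cdots$. From $U_1$ onward each $U_n$ lies in some element of $\psi(B^{(n-1)})$, where $B^{(n-1)}\subseteq U_{n-1}$ is the block of the partition of $U_{n-1}$ used in the answer. However, $U_n$ itself need not be basic, so the sequence $(B^{(n)})$ is not directly a run against $\psi$. To fix this, we make the partition of $U_n$ refine the previous structure: $U_n\subseteq V_{n-1}\in\psi(B^{(n-1)})$, and we choose a particular block $B^{(n)}$ of $U_n$ to track. The difficulty is that $\bigcap U_n$ may be strictly larger than $\bigcap B^{(n)}$, and Player $\mathrm{I}$'s intersection may contain several blocks. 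This is where the winning condition can fail.

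We expect this to be the main obstacle. The first attempt is to make $\sigma$ stationary but depend on $U$ cleverly: choose the partition of $U$ so that at most one block is "non-trivial". Concretely, we use the canonical normal tree $T$ and partition $U$ into the $\subseteq$-maximal sets $\Omega(C)\subseteq U$, $C\in S_{G,T}$; these are pairwise disjoint by nestedness. For a decreasing sequence, only the blocks containing a limit point accumulated along a branch matter. The intersection $\bigcap U_n$ then splits into the tracked basic run, giving $\{x\}\cup A$ by $\psi$, together with blocks that eventually stay fixed. A fixed block is an open set, so it can be absorbed into $A$. Uniqueness of $x$ must then be verified via the Hausdorff argument as in Proposition~\ref{tricotomiaA}. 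If this absorption fails, the fallback is to pass through Theorem \ref{t3}: $X$ is an end space, so we realize it as $\mathcal{R}(T)$ and use the explicit strategy of Proposition~\ref{estrategia}, adapting its type 1–3 decomposition to arbitrary open $U$ via the maximal-basic partition.
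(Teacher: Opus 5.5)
Your (2)$\Rightarrow$(1) is correct and matches the paper's argument. You refine each answer of $\sigma$ by disjoint basics and observe that a $\psi$-run is a legal $\sigma$-run with the same moves by Player~I. Adding $X\setminus U$ to the cover, which is open because $U$ is clopen, is a cleaner way to invoke Theorem~\ref{normalapprox} than the paper's. For (1)$\Rightarrow$(2) you also define exactly the paper's strategy, $\sigma(U)=\bigcup_i\psi(B_i)$ for a fixed partition $\{B_i\}$ of $U$ into basics. However, you never show that it wins. Instead you assert an obstacle, sketch an unverified ``absorption'' repair, and keep Theorem~\ref{t3} and Proposition~\ref{estrategia} as a fallback you do not carry out. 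As written, this direction is not proved.

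The obstacle you describe does not exist, and the missing step is short. Take a run $U_0\supseteq U_1\supseteq\cdots$ against $\sigma$. Let $B^{(n)}$ be the block of the partition of $U_n$ such that $U_{n+1}\subseteq W_n$ for some $W_n\in\psi(B^{(n)})$. This block is unique, because members of $\sigma(U_n)$ arising from different blocks lie in disjoint blocks. Then
\[
B^{(n+1)}\subseteq U_{n+1}\subseteq W_n\subseteq B^{(n)} ,
\]
so $\langle B^{(0)},\psi(B^{(0)}),B^{(1)},\psi(B^{(1)}),\dots\rangle$ is a legal run of $End_{\mathcal{B}}(X)$ in which Player~II follows $\psi$. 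Your claim that $(B^{(n)})$ is ``not directly a run against $\psi$'' is therefore wrong: in that run it is Player~I who plays the basics $B^{(n)}$, so it is irrelevant that $U_n$ is not basic. The interleaving gives $\bigcap_n U_n=\bigcap_n B^{(n)}$, which has the unique form $\{x\}\cup A$ because $\psi$ is winning. In particular, $\bigcap_n U_n$ is never larger than $\bigcap_n B^{(n)}$ and cannot meet several blocks, since each $U_{n+1}$ already sits inside a single member of $\sigma(U_n)$. Choosing $B^{(n)}$ using the next move $U_{n+1}$ is harmless: it is only used to analyse the run, while $\sigma$ itself stays stationary. This is precisely the paper's proof, whose displayed unions $\bigcup_n U_n=\bigcup_n V_n$ should be read as intersections.
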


\begin{proof}
    $(1\Rightarrow 2)$: Let $\psi$ be the stationary winning strategy of Player II in the game $End_{\mathcal{B}}(X)$. We will show that Player II has a stationary winning strategy in the game $End(X)$. Let us play a round of the game $End(X)$:
    \begin{itemize}
        \item In round 0, let $U_0\subset X$ be the non-empty open set played by Player I in $End(X)$. By Theorem \ref{normalapprox}, there is a cover $\mathcal{C}_0$ consisting of pairwise disjoint open sets from the base $\mathcal{B}$ for the open set $U_0$. For each $V_0\in \mathcal{C}_0$, consider $\mathcal{C}_0^{V_0}=\psi (\langle V_0\rangle)$. Then, we define $\mathcal{U}_0=\bigcup_{V_0\in\mathcal{C}_0} \mathcal{C}_0^{V_0}$ as Player II's response in round 0 of the game $End(X)$.
        \item In round $n\in\omega$, let $U_n\subset W_n\in \mathcal{U}_{n-1}$ be the non-empty open set played by Player I in $End(X)$. So, $U_n\subset W_{n-1}\subset V_{n-1}\subset U_{n-1}$, where $W_{n-1}, V_{n-1}\in \mathcal{B}$. By  Theorem \ref{normalapprox}, there is a cover $\mathcal{C}_n$ consisting of pairwise disjoint open sets from the base $\mathcal{B}$ for the open set $U_n$. For each $V_n\in \mathcal{C}_n$, consider $\mathcal{C}_n^{V_n}=\psi (\langle V_n\rangle)$. Then, we define $\mathcal{U}_n=\bigcup_{V_0\in\mathcal{C}_n} \mathcal{C}_n^{V_n}$ as Player II's response in round $n$ of the game $End(X)$.
    \end{itemize}
    At the end of the game,
    $$\langle U_0, \mathcal{U}_0, \dots, U_n, \mathcal{U}_n, \dots \rangle$$
    played in the game $End(X)$, we obtain $\bigcup_{n\in\omega}U_n=\bigcup_{n\in\omega} V_n$, where $V_n\subset U_n\subset W_{n-1}\subset V_{n-1}\subset U_{n-1}$. Since 
    $$\langle V_0, \mathcal{C}_0^{V_0}, \dots, V_n, \mathcal{C}_n^{V_n}\dots \rangle$$
    is a game played in $End_{\mathcal{B}}(X)$ where Player II used the stationary winning strategy $\psi$, we have $\bigcup_{n\in\omega} V_n= \lbrace x \rbrace \cup A$, where $A$ is an open set of $X$ and $x\notin A$. Furthermore, $x\in X$ and $A$ are the only ones with this configuration. Therefore, Player II has a stationary winning strategy in $End(X)$.

    $(2\Rightarrow 1)$: Let $\varphi$ be the stationary winning strategy of Player II in the game $End(X)$. We will show that Player II has a stationary winning strategy in the game $End_{\mathcal{B}}(X)$. Let us play a round of the game $End_{\mathcal{B}}(X)$:
    \begin{itemize}
        \item In round 0, let $U_0\in \mathcal{B}$ be the non-empty open set played by Player I in $End_{\mathcal{B}}(X)$. Consider $\varphi (\langle U_0 \rangle)=\mathcal{C}_0$. By Theorem \ref{normalapprox}, for each $V_0\in \mathcal{C}_0$ there exists a cover $\mathcal{C}_0^{V_0}$ consisting of pairwise disjoint open sets from the base $\mathcal{B}$ for the open set $V_0$. Then, we define $\mathcal{U}_0=\bigcup_{V_0\in\mathcal{C}_0} \mathcal{C}_0^{V_0}$ as Player II's response in round 0 of the game $End_{\mathcal{B}}(X)$.
        \item In round $n\in\omega$, let $U_n\subset W_n\in \mathcal{U}_{n-1}$ be the non-empty open set played by Player I in $End_{\mathcal{B}}(X)$. So, $U_n\subset W_{n-1}\subset V_{n-1}\subset U_{n-1}$, where $W_{n-1}, U_n, U_{n-1}\in \mathcal{B}$. Consider $\varphi (\langle U_n \rangle)=\mathcal{C}_n$. By Theorem \ref{normalapprox}, for each $V_n\in \mathcal{C}_n$, there exists a cover $\mathcal{C}_n^{V_n}$ consisting of pairwise disjoint open sets from the base $\mathcal{B}$ for the open set $V_n$. Then, we define $\mathcal{U}_n=\bigcup_{V_n\in\mathcal{C}_n} \mathcal{C}_n^{V_n}$ as Player II's response in round $n$ of the game $End_{\mathcal{B}}(X)$.
    \end{itemize}
    In the end, we play a game in $End(X)$,
    $$\langle U_0, \mathcal{C}_0, \dots, U_n, \mathcal{C}_n, \dots \rangle$$
    and a game in $End_{\mathcal{B}}(X)$,
    $$\langle U_0, \mathcal{U}_0, \dots, U_n, \mathcal{U}_n, \dots \rangle,$$
    in which Player I's responses are the same. Since in $End(X)$, Player II used their stationary winning strategy $\varphi$, we have $\bigcup_{n\in\omega}U_n=\lbrace x \rbrace \cup A$, where $A$ is an open set of $X$ and $x\notin A$. Furthermore, $x\in X$ and $A$ are the only ones with this configuration. Therefore, Player II has a stationary winning strategy in $End_{\mathcal{B}}(X)$.
\end{proof}

Proposition \ref{metcharacterization} presents a complete topological characterization of the subspaces of a metrizable end space that are also end spaces. However, the general case remains an open problem.

\begin{question}
    Which subspaces of an end space are also end spaces?
\end{question} 

To conclude this section, we present the main theorem regarding subspaces, which contributes to the aforementioned open question. To this end, we will use the following lemma.
 
\begin{lemma}[\cite{approximatinginfinitegraphsbynormaltrees}]\label{opensareends}
Open subsets of end spaces are still end spaces.
\end{lemma}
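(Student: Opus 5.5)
The plan is to decompose the open set into clopen pieces, each of which is visibly the end space of a subgraph, and then glue those pieces together as a topological sum. Let $X=\Omega(G)$ and let $O\subseteq X$ be open. First, I apply Theorem \ref{normalapprox} to the open cover $\{O\}\cup\{X\setminus \overline{O}\}$, or more directly to a cover of $O$ by basic open sets contained in $O$. This refines $O$ to a pairwise disjoint family $\{\Omega(X_i,C_i)\}_{i\in I}$ of basic clopen sets with union exactly $O$. Here each $X_i\subset V(G)$ is finite and $C_i$ is a component of $G-X_i$. Since these sets are open and pairwise disjoint, $O$ is homeomorphic to their topological sum $\bigoplus_{i\in I}\Omega(X_i,C_i)$.

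Second, I show that each piece $\Omega(X_i,C_i)$ is itself an end space, namely that it is homeomorphic to $\Omega(G[C_i])$. The map sends an end $\varepsilon$ of $G$ with $C(X_i,\varepsilon)=C_i$ to the class of its rays' tails lying in $C_i$. The points to verify are these:
\begin{itemize}
\item Two rays in $C_i$ are equivalent in $G$ if and only if they are equivalent in $G[C_i]$. One direction is trivial. For the other, note that any finite $Y$ separating them in $G[C_i]$ yields the finite separator $Y\cup X_i$ in $G$.
\item Every end of $G[C_i]$ arises this way, so the map is a bijection.
\item Basic open sets correspond. For a finite $Y$, the components of $G-(X_i\cup Y)$ contained in $C_i$ are exactly the components of $G[C_i]-(Y\cap C_i)$. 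Moreover, the basic sets $\Omega(X_i\cup Y,D)$ with $D\subseteq C_i$ form a basis of the subspace $\Omega(X_i,C_i)$, because enlarging the finite separator only refines the basic sets.
\end{itemize}

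Third, I take $H=\bigsqcup_{i\in I}G[C_i]$ as a disjoint union of graphs. Rays and separators of $H$ live componentwise, since each component of $H-Y$ lies in a single $G[C_i]$. Hence $\Omega(H)$ is the topological sum of the spaces $\Omega(G[C_i])$, and so $\Omega(H)\cong O$. If one insists on connected graphs, I would instead add a new vertex $v$ adjacent to one vertex of each $G[C_i]$. Deleting $v$ as part of any separator shows that the end space is unchanged.

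I expect the main obstacle to be the second step, specifically checking carefully that the subspace topology on $\Omega(X_i,C_i)$ coincides with the end topology of $G[C_i]$. The key point is that refining separators by the fixed finite set $X_i$ loses nothing. As a fallback, one could instead verify the game condition of Theorem \ref{t3} on $O$ directly. For this, restrict a winning stationary strategy of Player $\mathrm{II}$, via Lemma \ref{endlemma}, to moves inside $O$, and use the trace of a special basis on the clopen partition of $O$. That route is heavier, so I would use the graph-theoretic decomposition above.
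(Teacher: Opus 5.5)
The paper gives no proof of this lemma; it imports it from Kurkofka--Melcher--Pitz. Your Steps 2 and 3 are correct and standard. The ends of $G$ living in $C_i$ form a space homeomorphic to $\Omega(G[C_i])$, and the end space of a disjoint union of graphs is the topological sum. The gap is Step 1, which carries all the content of the lemma, and your justification for it does not work.

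Theorem~\ref{normalapprox} refines open covers of the whole space $\Omega(G)$ and returns a partition of all of $\Omega(G)$. The family $\{O, X\setminus\overline{O}\}$ is not a cover unless $O$ is clopen, because it misses $\overline{O}\setminus O$. You also cannot feed the theorem a cover of $O$ alone. Once you add sets covering the points of $\overline{O}\setminus O$, the piece of the partition containing such a point meets $O$ (every neighbourhood of a boundary point does) without being contained in it. So the pieces lying inside $O$ do not cover $O$.

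In fact Step 1 is equivalent to $O$ being ultraparacompact. Suppose $O$ is ultraparacompact, and refine the cover of $O$ by basic sets contained in $O$ to a disjoint open family $\mathcal{P}$. Each $P\in\mathcal{P}$ is closed in $O$ and lies inside a basic set $B\subseteq O$ that is clopen in $\Omega(G)$, so $P$ is clopen in $\Omega(G)$. Theorem~\ref{normalapprox} applied to the genuine cover $\{P, X\setminus P\}$ then splits $P$ into basic sets. Conversely, a topological sum of clopen subspaces of the ultraparacompact space $\Omega(G)$ is ultraparacompact.

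But ultraparacompactness, even paracompactness, does not pass to open subspaces in general. For example, $\omega_1$ is open in the compact zero-dimensional space $\omega_1+1$ but is not paracompact. So you need an end-space-specific argument that open subspaces are ultraparacompact, such as a normal-tree construction adapted to $O$ rather than to a cover of $\Omega(G)$. Supplying this is the nontrivial part of the cited result.

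Your fallback hits the same wall. Restricting a winning strategy for $\mathrm{End}(X)$ to open subsets of $O$ is harmless. However, Theorem~\ref{t3} needs a winning strategy in $\mathrm{End}_{\mathcal{B}}(O)$ for a special basis. Converting to that requires splitting Player $\mathrm{II}$'s open answers into disjoint basic sets, which is Step 1 again. Note also that Lemma~\ref{endlemma} is stated only for end spaces, so applying it to $O$ itself would be circular.
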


\begin{thm}
\label{generalcasegdelta}
   Let $A \subset \Omega(G)$ be a $G_{\delta}$ subset, then $A$ is an end-space.
\end{thm}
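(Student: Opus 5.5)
Write $A=\bigcap_{n\in\mathbb{N}}O_n$ with each $O_n\subseteq \Omega(G)$ open, and assume without loss of generality that $O_{n+1}\subseteq O_n$. By Lemma \ref{opensareends} each $O_n$ is an end space. The plan is to apply Theorem \ref{t3} to $A$. This requires producing a special clopen subbase $\mathcal{C}_A$ of $A$ together with a stationary winning strategy for Player $\mathrm{II}$ in $\mathrm{End}_{\mathcal{B}_A}(A)$, where $\mathcal{B}_A$ is the basis generated by $\mathcal{C}_A$. Since Lemma \ref{endlemma} lets us pass freely between the game on the usual base and the unrestricted game $\mathrm{End}(X)$, I will first build a winning strategy in the unrestricted game $\mathrm{End}(A)$, and only afterwards attend to the special subbase.

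The strategy is built by interleaving the restrictions to the $O_n$. Let $\psi$ be a stationary winning strategy for Player $\mathrm{II}$ in $\mathrm{End}(\Omega(G))$, which exists by Theorem \ref{t3} together with Lemma \ref{endlemma}. When Player $\mathrm{I}$ plays an open $U\subseteq A$ in round $n$, write $U=W\cap A$ with $W$ open in $O_n$. Use Theorem \ref{normalapprox} to refine $\psi(W)\wedge\{O_n\}$ into a disjoint clopen partition of $W\cap O_n$, and let the answer be the traces of these pieces on $A$. The answer must be stationary, so it can depend only on $U$ and not on $n$. I will handle this by choosing a canonical depth $n(U)$ from $U$, for instance via a fixed $\sigma$-disjoint decomposition, so that along any decreasing play the depths tend to infinity.

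In a resulting play $U_0\supseteq U_1\supseteq\cdots$, the intersection of the ambient sets is $\{x\}\cup A'$ with $A'$ open in $\Omega(G)$, and it lies inside $\bigcap_n O_n=A$. Tracing on $A$ then gives $\bigcap U_n=\{x\}\cup A'$ with $A'$ open in $A$, and uniqueness passes to the subspace. The special subbase comes from the same construction: take the traces on $A$ of the sets appearing in the nested partitions of the $O_n$, and check that nestedness, $\sigma$-disjointness and the noetherian property survive taking traces. Hausdorffness of $A$ is inherited from $\Omega(G)$.

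The main obstacle is ensuring that the ambient intersection is genuinely contained in $A$ while keeping the strategy stationary. If the depth does not grow along every play, the limit point $x$ may fall outside $A$, and then the trace of $\{x\}\cup A'$ on $A$ is just an open set with no distinguished point, so Player $\mathrm{II}$ loses. My first attempt will be to shrink each answer so that every piece of the cover is a subbasic set of the tree for $O_{k}$, where $k$ is one more than the level of the piece in a fixed tree representation. Because $\mathcal{C}$ is noetherian, descending plays must strictly increase these levels infinitely often, which should force $x\in\bigcap O_k$.
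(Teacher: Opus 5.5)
Your proposal has a genuine gap, and it is exactly at the step you flag as the ``main obstacle''.

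\textbf{The stationarity gap.} Your mechanism is one strategy $\psi$ for $\Omega(G)$, answering with traces of a partition of $W\cap O_n$ that refines $\psi(W)$. This is a leaner version of what the paper does. The paper uses separate strategies $\phi_k$ on the open sets $A_k$ and interleaves them along a partition $\mathbb{N}=\bigcup_k N_k$ into infinite sets. It lifts each move $U_n$ to an open subset of $A_{i_n}$ chosen \emph{inside the previously chosen piece} $W_n'$. You need two features:
\begin{enumerate}
\item the lifts form a legal play against $\psi$, i.e.\ each $W_{n+1}$ lies in the piece of the previous answer that contains $U_{n+1}$;
\item the index of the $O_n$ used is unbounded along every play.
\end{enumerate}
The paper obtains both from the round number and the history. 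Its strategy is therefore not stationary either, so it does not supply the idea you are missing.

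Your proposal does not supply it. A lift chosen from $U$ alone need not sit inside the previous piece, so $\psi$'s winning property cannot be invoked. The ``canonical depth'' is never defined, and the reason you give for it is wrong. Noetherianity restricts \emph{increasing} chains of $\mathcal{C}$ and says nothing about a descending play. Along such a play the subbasic part $C$ of $[C,F]$ may stay fixed while only $F$ grows (the situation of Corollary \ref{mesmabase}), so no level attached to the subbasic part need increase. Also, levels in a tree representation are ordinals, so ``$O_k$ with $k$ one more than the level'' is not defined. A stationary argument would at least have to handle the stabilizing case separately. One way is to cut out, via some $O_m$, the point (if any) whose smallest member of $\mathcal{C}$ is $C$, when that point lies outside $A$. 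It would also have to control the limit directly rather than through $\psi$.

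\textbf{Uniqueness does not pass to the subspace.} A point that is not interior in $\Omega(G)$ can be isolated in $A$. Let $T$ consist of:
\begin{itemize}
\item a chain $t_0<t_1<\cdots$;
\item one top $t_\omega$ of this chain, followed by a ray;
\item for each $n$, a ray starting at a second successor $s_n$ of $t_n$.
\end{itemize}
Then $\mathcal{R}(T)$ is a sequence of isolated points $R_n\to r=\{t_n:n\in\mathbb{N}\}$ together with one more isolated point $R_\omega\supsetneq r$. The set $A=\{r,R_\omega\}=\bigcap_n[t_n,\emptyset]$ is $G_\delta$ and discrete.

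Take $O_n=[t_n,\emptyset]$, the lift $W=O_n$ of $U=A$, and $\psi$ induced by the strategy of Proposition \ref{estrategia}. The answer $\psi(W)=\{[t_{n+1},\emptyset],[s_n,\emptyset]\}$ is already a partition of $W\cap O_n$ into basic sets, and its traces are $A$ and $\emptyset$. So Player I may play $A$ in every round. The ambient intersection $\{r\}\cup\{R_\omega\}$ is a winning configuration in $\Omega(G)$. In $A$, however, both $\{r\}\cup\{R_\omega\}$ and $\{R_\omega\}\cup\{r\}$ are admissible decompositions, so Player II loses. A correct strategy on $A$ must also split off the limit point whenever it becomes interior relative to $A$; the paper's argument does not check this either.

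\textbf{Smaller points.} For the subbase, simply take the traces of a special subbase of $\Omega(G)$; the union over $n$ of the partitions of the $O_n$ is not nested in general. Lemma \ref{endlemma} is stated for end spaces, so for $A$ you must re-prove the direction you use. It does go through, because open subsets of $A$ are traces of open sets that Theorem \ref{normalapprox} partitions into basic sets.
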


\begin{proof}
Consider $\mathbb{N}=\bigcup_{i\in\omega}N_i$ as a partition of the naturals into disjoint infinite sets. Since each $A_n$ is open, by  Lemma \ref{opensareends}, each $A_n$ is an end space. By Lemma \ref{endlemma}, for each $n\in\omega$, there exists a stationary winning strategy $\phi_n$ for Player II in the game $End(A_n)$. Let $\mathcal{B}'$ be the special basis of $A$ induced by the special basis of $X$. We will show that Player II has a stationary winning strategy in the game $End_{\mathcal{B}'}(A)$. To prove this, we will play a round of the game $End_{\mathcal{B}'}(A)$.

\begin{itemize}
    \item In round 0, let $U_0\in \mathcal{B}'$ be the non-empty open set played by Player I in the game $End_{\mathcal{B}'}(A)$. Since $\mathbb{N}=\bigcup_{i\in\omega}N_i$ is a decomposition, there exists $i_0\in\omega$ such that $0\in N_{i_0}$. Consider $U_0^{i_0}\subset A_{i_0}$ an open set such that $U_0^{i_0}\cap A=U_0$. Let $\phi_{i_0}(\langle U_0^{i_0}\rangle)=\mathcal{C}_0^{i_0}$. By Theorem \ref{normalapprox}, for each $V_0\in\mathcal{C}_0^{i_0}$, consider $\mathcal{C}_0^{i_0, V_0}$ a covering formed by pairwise disjoint open sets from the basis $\mathcal{B}$ for the open set $V_0$. Define $\mathcal{U}_0=\bigcup_{V_0\in\mathcal{C}_0^{i_0}} \lbrace V\cap A : V\in \mathcal{C}_0^{i_0, V_0}\rbrace$ as Player II's move in round 0 of the game $End_{\mathcal{B}'}(A)$.
    
    \item In round $n$, let $U_n\in \mathcal{B}'$ be the non-empty open set played by Player I in the game $End_{\mathcal{B}'}(A)$. Then $U_n\subset W_n \subset U_{n-1}$, where $ W_n\in \mathcal{U}_{n-1}$. Note that there exist unique $V_n\in \mathcal{C}_{n-1}^{i_{n-1}}$ and $W_n'\in \mathcal{C}_{n-1}^{i_{n-1},V_{n-1}}$ such that $W_n=W_n'\cap A$ and $W_n'\subset V_n$. Since $\mathbb{N}=\bigcup_{i\in\omega}N_i$ is a decomposition, there exists $i_n\in\omega$ such that $n\in N_{i_n}$. Consider $U_n^{i_n}\subset A_{i_n}$ an open set such that $U_n^{i_n}\cap A=U_n$ and $U_n^{i_n}\subset W_{n}'$. Let $\phi_{i_n}(\langle U_n^{i_n}\rangle)=\mathcal{C}_n^{i_n}$. By Theorem \ref{normalapprox}, for each $V_n\in\mathcal{C}_n^{i_n}$, consider $\mathcal{C}_n^{i_n,V_n}$ a covering formed by pairwise disjoint open sets from the basis $\mathcal{B}$ for the open set $V_n$. Define $\mathcal{U}_n=\bigcup_{V_n\in\mathcal{C}_n^{i_n}} \lbrace V\cap A : V\in \mathcal{C}_n^{i_n,V_n}\rbrace$ as Player II's move in round $n$ of the game $End_{\mathcal{B}'}(A)$.
\end{itemize}

At the end of the game, we obtain $\bigcup_{n\in\omega}U_n=(\bigcup_{k_n\in N_k}U_{k_n}^{i_k})\cap A\subset A_k$, for all $k\in\omega$. Since
$$\langle U_{k_0}^{i_k}, \mathcal{U}_{k_0}^{i_k}, U_{k_1}^{i_k}, \mathcal{U}_{k_1}^{i_k}, \dots, U_{k_n}^{i_k}, \mathcal{U}_{k_n}^{i_k}, \dots \rangle $$
is a valid play in $End(A_k)$ where Player II used their stationary winning strategy $\phi_k$, there exist unique $x_k\in A_k$ and an open set $S_k\subset A_k$ such that $x_k\notin S_k$ and $\bigcup_{k_n\in N_k}U_{k_n}^{i_k}=\lbrace x_k\rbrace \cup S_k$. Thus,
$$\bigcup_{n\in\omega}U_n=(\lbrace x_k\rbrace \cup S_k)\cap A\subset A_k$$
for all $k\in\omega$. Suppose there exists $y\in A_k\setminus A$, for some $k\in\omega$. Since $A$ is $G_{\delta}$, there exists $n\in\omega$ such that $y\notin A_n$. Given that $\mathbb{N}=\bigcup_{i\in\omega}N_i$ is a decomposition, there exists $s\in\mathbb{N}$ such that $s\in N_n$, thus $U_s^{i_s}\subset A_n$. By construction, for all $m\in\mathbb{N}$ such that $m\geq s$, $U_m^{i_m}\subset U_s^{i_s}$. Therefore, $y\notin \bigcup_{k_n\in N_k}U_{k_n}^{i_k}$, which is a contradiction! Hence, $y\in A$ for all $k\in\omega$. Therefore, $\lbrace x_k \rbrace \cup S_k\subset A$ for all $k\in\omega$. By the uniqueness of the expression $\bigcup_{k_n\in N_k}U_{k_n}^{i_k}=\lbrace x_k\rbrace \cup S_k$, we obtain $x=x_k=x_l$ and $S=S_k=S_l$ for all $k,l\in\omega$. Thus, $\bigcup_{n\in\omega}U_n=\lbrace x\rbrace \cup S$. Therefore, Player II has a stationary winning strategy in the game $End_{\mathcal{B}'}(A)$.
\end{proof}

\section{Topological products}
\label{product}
\paragraph{}
In this section, we study the topological space resulting from the product of end spaces. We first address the case of the product of metrizable end spaces in Section \ref{metrizablecaseproduct}, where the structure is well-behaved. Subsequently, in Section \ref{obstructions}, we treat the product of general end spaces. In this latter case, the product of end spaces is not always an end space; a counterexample is provided as an application of Theorem \ref{t3}

\subsection{The metrizable case}
\label{metrizablecaseproduct}

\paragraph{}
If the end-space of a graph $G$ is metrizable, it will be by Theorem 3.1 of Pitz in \cite{pitz} in fact completely ultra-metrizable and therefore by Proposition 2.1 of Brian in \cite{brian} will be $\Omega(G) = \Omega(T_G) = \mathcal{R}(T_G) = \mathcal{B}(T_G)$ for some pruned tree $T_G$ of height $\omega$. For trees of this height, a natural way of defining a graph structure on $T$ is available. We shall denote such graph still with $T$ and, in fact, it is easy to see that in this case $\Omega(T)$ will coincide with all of the above spaces. Any zero-dimensional complete metric space $(X,d)$ admits  an equivalent complete ultra-metric $d'$ (Corollary 5, \cite{ultrametrization}), therefore metrizable branch-spaces also falls in this class.

Consider a countable product $\prod_{n \in \mathbb{N}}X_n$ of completely ultra-metrizable spaces. Complete metrizability and zero-dimensionality is preserved by countable product, therefore $\prod_{n \in \omega} X_n$ is homeomorphic to $\Omega(T)$ for some pruned tree of height $\omega$, therefore

\begin{prop}
    \label{countablemetrizable}
    The class of metrizable branch- and end-spaces is closed under countable products.
\end{prop}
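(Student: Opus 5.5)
The plan is to reduce \cref{countablemetrizable} to the characterization of the metrizable case already recorded above. For end-spaces this is: a metrizable end-space is completely ultra-metrizable, by Theorem 3.1 of Pitz in \cite{pitz}. Conversely, every zero-dimensional completely metrizable space is homeomorphic to $\mathcal{B}(T)=\mathcal{R}(T)=\Omega(T)$ for a pruned tree $T$ of height $\omega$, by Corollary 5 of \cite{ultrametrization} together with Proposition 2.1 of \cite{brian}. For branch-spaces we first note that a metrizable branch space is completely metrizable and zero-dimensional. Zero-dimensionality is clear, since the sets determined by a node are clopen in $\mathcal{P}(T)\subset 2^T$. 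For complete metrizability we use the completeness of the special base from \cite{pitz}: a metrizable space with a complete (sub)base of this kind is Čech-complete, hence completely metrizable. The upshot is that, in both cases, the class in question coincides with the class of zero-dimensional completely metrizable spaces, and every member is simultaneously an end-space and a branch space of a pruned $\omega$-tree.

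Having reduced to that class, let $\{X_n\}_{n\in\mathbb{N}}$ be metrizable end- (or branch-) spaces. We check three properties of $\prod_n X_n$. First, it is metrizable, since it is a countable product of metrizable spaces. Second, it is completely metrizable: if $d_n\le 1$ are complete metrics on the factors, then $d(x,y)=\sum_n 2^{-n}d_n(x_n,y_n)$ is a complete metric on the product. Third, it is zero-dimensional, because finite intersections of cylinders over clopen sets of the factors are clopen and form a base. Applying the characterization in reverse then yields a pruned tree $T$ of height $\omega$ with $\prod_n X_n\cong \Omega(T)=\mathcal{B}(T)$. Concretely, one can take $T$ to be the tree of finite sequences of basic clopen pieces obtained by refining, level by level, diagonally interleaved partitions of the factors into sets of diameter $<2^{-k}$.

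The main obstacle is the branch-space half of the first step, namely showing that a metrizable branch space is completely metrizable. If we cannot simply cite \cite{pitz} there, we will argue directly. A metrizable branch space of a tree contains no uncountable well-ordered decreasing neighbourhood chains, which lets us prune to a subtree of height $\omega$ on which every branch has countable cofinality. On that subtree the metric $d(b,b')=2^{-\text{(first level where $b,b'$ differ)}}$ is a complete ultrametric. The remainder of the argument is routine.
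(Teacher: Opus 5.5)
Your proposal is correct and follows the same route as the paper. Both arguments identify metrizable end- and branch-spaces with the zero-dimensional completely metrizable spaces (via Theorem 3.1 of \cite{pitz}, Corollary 5 of \cite{ultrametrization} and Proposition 2.1 of \cite{brian}), use that this class is closed under countable products, and realise the product as $\Omega(T)=\mathcal{B}(T)$ for a pruned tree of height $\omega$.

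The paper passes over the step that a metrizable branch space is completely metrizable, and your justification of it is sound once stated with a base rather than a subbase. The sets $\{b\in\mathcal{B}(T): t\in b\}$, $t\in T$, already form a complete nested clopen \emph{base}, so covers by basic sets of small diameter witness \v{C}ech-completeness directly. Your pruning fallback is therefore unnecessary, and as stated it would not work: cutting a tree with limit levels down to height $\omega$ changes its branch space.
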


The above proof is very indirect, and do not construct a new graph $G$ such that $\Omega(G)=\prod_{i \in \mathbb{N}}\Omega(G_n)$. In the metrizable case we have $T_G \leq G$ a normal faithful tree of the graph. In the following, we attempt to construct a natural $\prod T_{G_n}$ that realizes the desired end-space $\Omega(\prod T_{G_n}) \approx \prod \Omega(G_n)$ for the finite product case.

Notice a pruned tree $T$ of ordinal height $\omega$ can be seen as a sequence $T^\bullet:\omega \to \catname{Top}$ of topological spaces of it is levels $T^n \doteq \{t \in T \st t \text{ has height }n\}$, which we consider with discrete topology. For every pair $n \leq m $, every $x \in T^m$ has an unique ancestral $a^{nm}(x) \in T^n$ in that lower level. Such systems of topological-spaces, coupled with maps $(T^\bullet,\{a_{nm}:T^m \to T^n\}_{m \geq n})$ such that $a^{m \ell} \circ a^{nm} = a^{n \ell}$ and $a^{nn} = \mathrm{id}_{T^n}$ are called \textbf{inverse systems}, or rather \textbf{diagrams} in a more general categorical setting, which we shall not consider. 

Consider $T^\bullet$ an inverse system, we define an associated tree $(T, \leq)$ in the following. the set $T$ given by the disjoint union of its levels. For every $t \in T^m_i$ and $n \leq m$, it is well defined the unique $a_{i}^{nm}(x) \in T^n$ the only element $t' \in T^n_i$ of this level such that $t' \leq t$. Notice for any $(t_i) \in T^n_i$, we have that \[\lceil (t_i) \rceil = \{(a^{n-1,n}_i(x_i))>\dots>(a^{1,n}_i(x_i))>(r_{i})\}\] is well-ordered, where $r_{i}$ is the root of $T_i$, providing $T$ with an order that makes it into an order theoretic tree. 

Let $(T^\bullet,\{a^{nm}\}_{m \geq n})$ be an inverse system. Consider $P \doteq \prod_{n \in \omega} T^n$ with product topology, with canonical projections $\{\pi_{n}\}_{n \in \omega}$. We define the \textbf{the projective(inverse) limit} of this system of topological spaces as
\[
\varprojlim T^\bullet = \{(x_n) \in P \st \forall n \leq m (a^{nm}(x_m) = x_n)\} \subset \prod_{n \in \omega} T^n
\]
It is not difficult to show that we have the homeomorphism $\varprojlim T^\bullet \approx \mathcal{B}(T)$ where $T^\bullet$ is the system associated with $T$. It is important to notice if $T^\bullet$ is a general inverse system, its levels may not be discrete, therefore $\varprojlim T^\bullet$ will not be homeomorphic to the branch-space $\mathcal{B}(T)$ of its associated tree. 

In the following we describe a product of trees as systems.

\begin{defn}[Level-wise product of trees]
\label{system-product}
For $i \in I \neq \emptyset$ any index set, let $(T_i^\bullet:\omega \to \catname{Top},\{a_i^{nm}:T_i^m \to T_i^n\}_{m \geq n})$ be a pruned tree of height $\omega$, given as a system. Define $T^\bullet \doteq \prod_{i \in I}T_i^\bullet$ as the following \textbf{system}:
\begin{itemize}
    \item At $T^n \doteq \prod_{i \in T}T_i^n$ with product topology. Each level assumes natural projections $\pi^n_i:T^n \to T_i^n$.
    \item The maps $a^{nm}:T^m \to T^n $ are the tuple of maps $\langle a^{mn}_i \rangle_{i \in I}$.
\end{itemize}
\end{defn} 

What we get from this construction is that the limit of the product is the product of the limits:

\begin{prop}[Direct application of 2.5.10 of \cite{engelking}]
\label{branch-product-preservation}
    Consider a family of inverse systems $(T_i^\bullet:\omega \to \catname{Top},\{a_i^{nm}:T_i^m \to T_i^n\}_{m \geq n})$, then \[\varprojlim \left( \prod_{i \in I} T^\bullet_i\right) = \prod_{i \in I}(\varprojlim T_i^\bullet).\]
\end{prop}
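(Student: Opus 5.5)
The plan is to build an explicit homeomorphism between the two spaces, coordinatewise and level by level, as the paper itself suggests by attributing this to Engelking 2.5.10.

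First, set up the notation. Write $P_i=\prod_{n\in\omega}T_i^n$ and $P=\prod_{n\in\omega}T^n$, where $T^n=\prod_{i\in I}T_i^n$. Reindexing product factors gives a canonical homeomorphism
\[
\Phi: P=\prod_{n\in\omega}\prod_{i\in I}T_i^n\longrightarrow \prod_{i\in I}\prod_{n\in\omega}T_i^n=\prod_{i\in I}P_i,\qquad \Phi\big((x^n)_n\big)=\big((\pi^n_i(x^n))_n\big)_i .
\]
It is a homeomorphism because both sides carry the product topology of the same family $\{T_i^n\}_{(i,n)\in I\times\omega}$. Concretely, both have as subbase the preimages of open sets of a single factor $T_i^n$, and $\Phi$ carries these onto each other. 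So the equality claimed in the statement should be read as an identification under $\Phi$, and $\Phi$ is the map to restrict.

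Second, show that $\Phi$ maps $\varprojlim\left(\prod_i T_i^\bullet\right)$ onto $\prod_i\varprojlim T_i^\bullet$. By Definition \ref{system-product}, $a^{nm}=\langle a_i^{nm}\rangle_{i\in I}$, so $\pi_i^n\circ a^{nm}=a_i^{nm}\circ\pi_i^m$. Hence a thread $(x^n)$ satisfies $a^{nm}(x^m)=x^n$ for all $n\le m$ exactly when $a_i^{nm}(\pi_i^m x^m)=\pi_i^n x^n$ for all $i$ and all $n\le m$. That is precisely the condition that each coordinate sequence $(\pi_i^n x^n)_n$ lies in $\varprojlim T_i^\bullet$. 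This shows that $\Phi$ restricts to a bijection between the two subsets, in both directions.

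Third, both limits carry the subspace topology from $P$ and from $\prod_i P_i$ respectively, and $\prod_i\varprojlim T_i^\bullet$ carries the product of subspace topologies, which coincides with the subspace topology inside $\prod_i P_i$. Since the restriction of a homeomorphism to a subset is a homeomorphism onto its image, the two limits are homeomorphic.

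The only delicate point is bookkeeping, and I expect it to be easy. One must check that the product of subspace topologies equals the subspace topology of the product, which follows immediately by comparing subbases. One should also note that the typo $\prod_{i\in T}$ in Definition \ref{system-product} means $\prod_{i\in I}$, and that the coordinate maps are $a^{nm}_i$, not $a^{mn}_i$.
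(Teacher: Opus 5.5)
Your proposal is correct and matches what the paper relies on. The paper gives no proof beyond citing Engelking 2.5.10, and your argument is the standard proof of that result: the reindexing homeomorphism $\prod_n\prod_i T_i^n\cong\prod_i\prod_n T_i^n$, restricted to threads via $\pi_i^n\circ a^{nm}=a_i^{nm}\circ\pi_i^m$. Reading the stated equality as a canonical homeomorphism and fixing the index typos in Definition \ref{system-product} are both appropriate.
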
 

\begin{cor}
    If $I$ is finite, the tree $T$ associated with the system $\prod_{i \in I} T_i^\bullet$ has $\Omega(T) = \prod_{i \in I}\Omega(T_i)$ when $T_i$ is pruned of height $\omega$.  
\end{cor}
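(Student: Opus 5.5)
The plan is to identify three things: the tree $T$ attached to the system $\prod_{i\in I}T_i^\bullet$, its branch space as an inverse limit, and its end space. Then Proposition \ref{branch-product-preservation} does the remaining work.

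\emph{Step 1: the associated tree.} Since $I$ is finite and each $T_i^n$ is discrete, every level $T^n=\prod_{i\in I}T_i^n$ is a finite product of discrete spaces, hence discrete. So $\prod_{i\in I}T_i^\bullet$ is a genuine inverse system of discrete levels, and its associated tree $T$ is well defined. Concretely, $T$ is the disjoint union of the $T^n$, and $(t_i)\le(s_i)$ iff both tuples lie on levels $n\le m$ and $t_i=a_i^{nm}(s_i)$ for every $i$.

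I will then check two properties of $T$.
\begin{itemize}
\item $T$ has height $\omega$: every $\lceil (t_i)\rceil$ is finite, with one element per level below.
\item $T$ is pruned: given $(t_i)\in T^n$, each $T_i$ is pruned, so each $t_i$ has a successor $s_i\in T_i^{n+1}$, and $(s_i)$ is then a successor of $(t_i)$.
\end{itemize}
Hence every branch of $T$ is a chain meeting every level exactly once, and it has no maximal element. Therefore $\mathcal{B}(T)=\mathcal{R}(T)$, and no ray has a top. This finiteness of $I$ is exactly where discreteness is used. For infinite $I$ the levels are not discrete, which is the caveat remarked after the definition of the projective limit.

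\emph{Step 2: branch spaces as inverse limits.} By the observation $\varprojlim T^\bullet\approx\mathcal{B}(T)$ for systems with discrete levels, applied both to $T$ and to each $T_i$, and by Proposition \ref{branch-product-preservation}, we get
\[
\mathcal{B}(T)\approx\varprojlim\Big(\prod_{i\in I}T_i^\bullet\Big)=\prod_{i\in I}\varprojlim T_i^\bullet\approx\prod_{i\in I}\mathcal{B}(T_i).
\]
I would verify the first homeomorphism explicitly rather than only cite it. The map sends a branch to its sequence of level elements. Under this map, the basic set $[t,\emptyset]$ (there are no tops, so $F=\emptyset$ suffices) corresponds to the cylinder $\pi_n^{-1}(\{t\})\cap\varprojlim$, and these cylinders form a base of the limit because the levels are discrete.

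\emph{Step 3: passing to end spaces.} For a pruned tree of height $\omega$ viewed as a graph, the paper already noted that $\Omega(T)=\mathcal{R}(T)=\mathcal{B}(T)$: the rays of the tree graph from the root correspond to branches, and the topologies agree since there are no tops. Applying this to $T$ and to each $T_i$ gives $\Omega(T)\approx\prod_{i\in I}\Omega(T_i)$.

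The main obstacle is the bookkeeping in Step 1: making sure that the order induced from the product system really is a tree order whose basic open sets $[t,\emptyset]$ match the product cylinders. Once finiteness of $I$ gives discreteness of the levels, this is direct.
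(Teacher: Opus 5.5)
Your proposal is correct and takes the same route the paper intends: finiteness of $I$ makes the product levels discrete, so the branch space of the associated tree is the inverse limit of $\prod_{i\in I}T_i^\bullet$. Proposition \ref{branch-product-preservation} turns this into $\prod_{i\in I}\mathcal{B}(T_i)$, and the identification $\Omega=\mathcal{R}=\mathcal{B}$ for pruned trees of height $\omega$ finishes the argument. Your explicit checks — that $T$ is pruned of height $\omega$, and that the sets $[t,\emptyset]$ correspond to the product cylinders — fill in details the paper leaves implicit.
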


In particular, there is a binary operation $\otimes$ in pruned trees of height $\omega$ such that $\mathcal{B}(T_1 \otimes T_2) \overset{\text{(i)}}{\approx} \mathcal{B}(T_1)\times \mathcal{B}(T_2)$ and $\Omega(T_1 \otimes T_2) \overset{\text{(ii)}}{\approx} \Omega(T_1)\times \Omega(T_2)$. For general trees, (i) is proven false in Example \ref{branch-counter-example} and (ii) in Theorem \ref{end-counter-example}.

\begin{ex}[Failure for infinitely many trees]
    But consider the complete binary tree $B $ whose branch set is the Cantor space $\mathcal{C}$. We will have in fact that $\varprojlim (B^\bullet)^\mathbb{N} = \mathcal{C}^\mathbb{N} = \mathcal{C}$ but the tree $T$ associated to the system $(B^\bullet)^\mathbb{N} $ is $\mathbb{N}^\mathbb{N}$, and its branch set is the Baire space $\mathcal{B}(T) = \mathbb{N}^\mathbb{N}$.
\end{ex}

\subsection{Counter-examples}
\label{obstructions}
\paragraph{}
In the following, we tackle some counter-examples of product closure on branch and end-spaces in the general case. We begin with a counter example for Proposition \ref{countablemetrizable} in the general case:

\begin{ex}[Uncountable products will not work]
\label{naturalpower}
    The natural numbers $\mathbb{N}$ is a completely ultrametrizable (branch-/end-)space where $\mathbb{N}^{\aleph_1}$ is not normal by Theorem 3 of Stone in \cite{Stone}.
\end{ex}

So branch and end-spaces are already not closed under uncountable products.

\begin{ex}[Michael line $\mathbb{M}$ as a branch space]
\label{michael}
    We say a ray of $2^{< \omega}$ is \textbf{rational} if it is eventually constant, \textbf{irrational} otherwise. Consider $T_\mathbb{M}$ a tree of height $\omega+\omega$ consisting of the binary tree $2^{<\omega}$, added with with two tops above each \textbf{irrational} branch and exactly one successor for each element of height $\geq \omega$. The branch  space $\mathcal{B}(T_\mathbb{M}) = \mathbb{M}$ is the Michael line.
\end{ex}

It is known (\cite{NYIKOS19991}) that branch spaces are ultraparacompact and Hausdorff, therefore normal. The above example then gives us the following counter-examples of product closure of branch spaces:

\begin{ex}
    \label{branch-counter-example}
    Consider the following products:
    \begin{itemize}
        \item The power of the Michael line with itself infinitely many times $\mathbb{M}^\omega$ is not normal, therefore it cannot be a branch-space. 
        \item Irrationals $\mathbb{R} \backslash \mathbb{Q} \approx \mathcal{B}(\omega^{<\omega})$ is a completely ultrametrizable branch set. The Michael line $\mathbb{M} \times (\mathbb{R} \backslash \mathbb{Q})$ is not a normal space, therefore it cannot be the branch space of a tree.
    \end{itemize}
\end{ex}

In particular, there \textbf{cannot} be a binary $\otimes$ over special trees such that $\mathcal{B}(T_1 \otimes T_2) = \mathcal{B}(T_1)\times \mathcal{B}(T_2)$.

\begin{question}
 What is the class of ray spaces (ends, branches) that is closed under products? That is, for which class is the product of any two ray spaces (ends, branches) still a ray space (ends, branches)? 

\end{question}

We say a vertex $v$ \textbf{dominates a ray} $r$ if there are infinitely many vertex-disjoint paths from $v$ to $r$, and it \textbf{dominates an end} when it dominates one of its representatives. 

\begin{ex}\label{ex}
    Consider $G$ a graph consisting of a collection of $\aleph_1$ vertex-disjoint rays. We choose one of the rays $\xi$ to be a distinguished one, and add edges from the first vertex of every $\xi' \neq \xi$ so that they dominate $\xi$. Lets identify $\Omega(G)\approx \omega_1$, and the distinguished $\xi $ with $0 \in \omega_1$. The topology here is the discrete one about every point $\alpha \in \omega_1 \backslash \{0\}$ and the basics from the point $0$ are $0 \cup (\omega_1 \backslash F)$ where $F \subset \omega_1$ is finite. This is a compact Hausdorff space.
\end{ex}

\begin{figure}[!ht]
\centering
\begin{tikzpicture}
    \def\radius{0.15}
    \def\dy{0.8}
    \def\dx{1}
    \foreach \x in {0,1,2,3,4,5} {
        \draw[->] (\x*\dx,\dy) -- (\x*\dx,2*\dy) ;
    }
    \node[right] at (6*\dx,1.5*\dy) {$\overset{\aleph_1 \text{-many copies}}{\cdots}$};
    \draw[->] (2.5*\dx,-2*\dy) -- (2.5*\dx,0) node[above] {$\xi$};
    \foreach \x in {0,1,2} {
        \foreach \y in {1,...,5} {
            \draw (\x*\dx,\dy) to[out=-45,in=180] ({2.5*\dx},{(-2/\y)*\dy});
        }
    }
    \foreach \x in {3,4,5} {
        \foreach \y in {1,...,5} {
            \draw (\x*\dx,\dy) to[out=225,in=0] ({2.5*\dx},{(-2/\y)*\dy});
        }
    }
\end{tikzpicture}
\end{figure}

In the following, we assume any special complete clopen base $\mathcal{B}$ is closed under finite intersections and complements. 

\begin{lemma}
\label{refinementforcompactendspace}
    Let $X$ be a compact end-space and $\mathcal{B}$ be a special complete clopen base of its topology, then for any open covering $\mathcal{O}$ there is a pairwise disjoint $\mathcal{B}$-covering that refines $\mathcal{O}$.
\end{lemma}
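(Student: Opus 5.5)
The plan is to use compactness to reduce to finitely many basic sets, and then to disjointify that finite family inside the Boolean algebra generated by $\mathcal{B}$. Since we assume $\mathcal{B}$ is closed under finite intersections and complements, every set in this algebra is again in $\mathcal{B}$.

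First, refine $\mathcal{O}$ by basic sets. For each $x\in X$ choose $O_x\in\mathcal{O}$ with $x\in O_x$, and then choose $B_x\in\mathcal{B}$ with $x\in B_x\subseteq O_x$; this is possible because $\mathcal{B}$ is a base. The family $\{B_x\}_{x\in X}$ is an open cover of the compact space $X$, so there is a finite subcover $B_1,\dots,B_k$, with each $B_j$ contained in some member of $\mathcal{O}$.

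Next, make the finite cover disjoint by the usual construction. Put
$$D_1=B_1,\qquad D_j=B_j\cap (X\setminus B_1)\cap\dots\cap(X\setminus B_{j-1})\quad (2\le j\le k).$$
These sets are pairwise disjoint, since for $i<j$ we have $D_j\subseteq X\setminus B_i\subseteq X\setminus D_i$. They cover $X$: a point $x$ lies in $D_j$ for the least $j$ with $x\in B_j$. Each $D_j$ is contained in $B_j$, so the family still refines $\mathcal{O}$. Each $D_j$ is clopen and, by the closure assumption on $\mathcal{B}$, belongs to $\mathcal{B}$. Finally, discard the empty $D_j$. The remaining family $\{D_j\}$ is a pairwise disjoint $\mathcal{B}$-covering refining $\mathcal{O}$.

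The only delicate step is membership of the $D_j$ in $\mathcal{B}$, and this is exactly what the standing convention on $\mathcal{B}$ supplies. Without that convention I would instead refine each $D_j$ into basic sets of the form $[U,F]$, using Lemma \ref{descricao} together with nestedness: a complement of a subbasic $U_\beta$ meets $U_\alpha$ either in $\emptyset$, in $U_\alpha$, or in $[U_\alpha,\{\beta\}]$. By induction, every Boolean combination of finitely many $[U,F]$'s is then a finite disjoint union of such sets. Completeness of $\mathcal{B}$ is not needed here, since compactness does that work.
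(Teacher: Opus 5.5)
Your argument is correct and is essentially the paper's: compactness gives a finite subcover by members of $\mathcal{B}$, each lying inside a member of $\mathcal{O}$, and this subcover is then disjointified using the standing convention that $\mathcal{B}$ is closed under finite intersections and complements (the paper first passes to a clopen partition via ultra-paracompactness, which is superfluous because $X$ itself is compact). Your disjointification $D_j=B_j\setminus\bigcup_{i<j}B_i$ is in fact the right one, whereas the paper's $\tilde{B}_i=B_i\setminus\bigcup_{j\neq i}B_j$ discards every point lying in two or more of the $B_j$ and so need not cover.
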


\begin{proof}
    As $X$ is ultra-paracompact, we have a (cl)open partition $\mathcal{P} \prec \mathcal{O}$. Fix an element $P \in \mathcal{P}$. Take $x \in P$, as $\mathcal{B}$ is a basis and $P$ is open, there is $x \in B_x \in \mathcal{B}$ such that $B_x \subset P$. Now $\{B_x\}_{x \in P}$ is an open cover of the compact $P$. Passing to a finite covering, we get a finite open covering $\{B_1,\dots,B_n\}$ of $P$ with basics from $\mathcal{B}$. Now, $\tilde{B}_i \doteq B_i \backslash \left( \bigcup_{j \neq i}B_j \right)$ is an intersection of basic opens.
\end{proof}

\begin{thm}\label{thm3}
    Let $(X,\tau)$ be a Hausdorff  compact ultra-paracompact topological space. If there is a clopen base $\mathcal{B}$ for $\tau$ in which $I\uparrow End_{\mathcal{B}}(X)$, then $X$ is not an end space.  
\end{thm}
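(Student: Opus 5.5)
We argue by contraposition: assuming $X$ is an end space, we show that for \emph{every} clopen base $\mathcal{B}$ Player $\mathrm{I}$ cannot have a winning strategy in $End_{\mathcal{B}}(X)$. The idea is to transfer a winning strategy of Player $\mathrm{II}$ from a special basis to an arbitrary clopen base. Compactness and ultra-paracompactness let us translate covers between the two bases, in the spirit of Lemma \ref{endlemma}, and in particular of Lemma \ref{refinementforcompactendspace}.

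First, by Theorem \ref{t3}, since $X$ is an end space there is a special basis $\mathcal{S}$ and a stationary winning strategy $\psi$ for Player $\mathrm{II}$ in $End_{\mathcal{S}}(X)$. By Lemma \ref{endlemma}, whose proof only uses that open sets admit disjoint refinements by basic sets (Theorem \ref{normalapprox}), we upgrade $\psi$ to a stationary winning strategy $\varphi$ for Player $\mathrm{II}$ in the unrestricted game $End(X)$.

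Second, we convert $\varphi$ into a strategy $\varphi'$ for Player $\mathrm{II}$ in $End_{\mathcal{B}}(X)$. When Player $\mathrm{I}$ plays $U\in\mathcal{B}$, each $V\in\varphi(U)$ is open. Using ultra-paracompactness together with the compactness argument of Lemma \ref{refinementforcompactendspace} (clopen subsets of a compact space are compact, and $\mathcal{B}$ is a base), we cover $V$ by finitely many $\mathcal{B}$-sets inside $V$. The delicate point is then to disjointify these using only $\mathcal{B}$-sets, since $\mathcal{B}$ need not be closed under differences. To handle this, we refine further: each point of $B_i\setminus\bigcup_{j<i}B_j$ (a clopen, hence compact, set) has a $\mathcal{B}$-neighbourhood inside it. We then proceed recursively, or, if needed, we assume as in the paragraph before Lemma \ref{refinementforcompactendspace} that $\mathcal{B}$ is closed under finite Boolean operations. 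This yields a disjoint $\mathcal{B}$-cover $\varphi'(U)$ refining $\varphi(U)$.

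Third, given any play $\langle U_0,\varphi'(U_0),U_1,\dots\rangle$, each $U_{n+1}$ is contained in some member of $\varphi'(U_n)$ and hence in some member of $\varphi(U_n)$. So the sequence $(U_n)$ is also a legal play against $\varphi$ in $End(X)$ with the same intersection, and therefore $\bigcap_n U_n=\{x\}\cup A$ uniquely. Hence $\varphi'$ is winning for $\mathrm{II}$, and in particular Player $\mathrm{I}$ cannot have a winning strategy, since two opposing winning strategies would produce a play won by both. This contradicts $\mathrm{I}\uparrow End_{\mathcal{B}}(X)$.

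The main obstacle is the second step: producing disjoint covers consisting of elements of an \emph{arbitrary} clopen base $\mathcal{B}$. This is exactly where the compactness and ultra-paracompactness hypotheses are used, and I would first try to reduce to the convention that $\mathcal{B}$ is closed under finite intersections and complements.
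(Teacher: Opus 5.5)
Your argument is the paper's proof read contrapositively. The paper fixes Player $\mathrm{I}$'s winning strategy $\phi$ in $End_{\mathcal{B}}(X)$ and an arbitrary strategy $\psi$ for Player $\mathrm{II}$ in $End_{\mathcal{B}'}(X)$. It feeds $\phi$ disjoint $\mathcal{B}$-refinements (Lemma \ref{refinementforcompactendspace}) of the answers of $\psi$, obtains a run that $\psi$ loses, and concludes via Theorem \ref{t3}. You instead transport Player $\mathrm{II}$'s winning strategy to $End_{\mathcal{B}}(X)$ and play it against $\phi$. Your covers $\varphi'(U)$ play the role of the paper's $\mathcal{V}_n$, so both proofs rest on the same step: rewriting Player $\mathrm{II}$'s answers as disjoint covers by members of $\mathcal{B}$.

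Of your two options for that step, only the fallback is viable. The recursive option does not work: covering $B_i\setminus\bigcup_{j<i}B_j$ by members of $\mathcal{B}$ recreates the overlaps and need not terminate. No cleverer argument can replace it, because the statement is false for an arbitrary clopen base. Write $[s]$ for the cylinder of a finite binary word $s$ and $x|_n$ for the first $n$ digits of $x\in 2^\omega$. Let $f$ be the finitely additive $\mathbb{Z}/2$-valued function on clopen subsets of $2^\omega$ determined by $f(2^\omega)=1$, $f([s0])=1$ and $f([s1])=f([s])+1$, and put $\mathcal{B}=\{C \text{ clopen} : f(C)=0\}$. This is a base: if $f([x|_n])=f([x|_{n+1}])=1$, then $x_n=0$ and $[(x|_n)0]\cup[(x|_n)10]\in\mathcal{B}$ contains $x$ and lies in $[x|_n]$; otherwise $[x|_n]$ or $[x|_{n+1}]$ is already in $\mathcal{B}$.

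Player $\mathrm{I}$ wins $End_{\mathcal{B}}(2^\omega)$ by playing $U_0=[0]\cup[11]$ and keeping $U_n=[a_n]\cup[b_n]$ with $f([a_n])=f([b_n])=1$. Player $\mathrm{II}$'s answer is a finite partition of $U_n$ into sets of $f$-value $0$, so some piece $P$ has $f(P\cap[a_n])=f(P\cap[b_n])=1$. Inside $P$, Player $\mathrm{I}$ then finds longer cylinders $[a_{n+1}]\subseteq P\cap[a_n]$ and $[b_{n+1}]\subseteq P\cap[b_n]$ of $f$-value $1$. Thus $\bigcap_n U_n$ has exactly two points, which is not a point plus an open set in $2^\omega$, although $2^\omega$ is a compact end space.

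Hence the convention you list as a fallback is an essential hypothesis; it suffices that every clopen set be a finite disjoint union of members of $\mathcal{B}$. The paper also uses it tacitly when it appeals to Lemma \ref{refinementforcompactendspace}. Under it your step 2 is immediate: every member of the finite disjoint open cover $\varphi(U)$ of the compact clopen set $U$ is clopen, hence a finite disjoint union of members of $\mathcal{B}$.

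Note also that Lemma \ref{endlemma} is stated for the usual base $\{\Omega(C)\}$, not for the special basis $\mathcal{S}$ given by Theorem \ref{t3}. You can bypass it: you only apply $\varphi$ to compact clopen sets, and compactness together with the nestedness of $\mathcal{S}$ lets you partition these into finitely many members of $\mathcal{S}$ before applying $\psi$.
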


\begin{proof}
    Let $\mathcal{B}',\mathcal{B}$ be bases for $\tau$ where $\mathcal{B}$ is such that $I\uparrow End_{\mathcal{B}}(X)$. Consider $\phi$ a a winning strategy of player $I$ in $End_{\mathcal{B}}$. We must prove Player II has no winning strategy in $End_{\mathcal{B}'}(X)$. Let $\psi$ a strategy from Player II in $End_{\mathcal{B}'}(X)$.

    \begin{itemize}
        \item Consider $\mathcal{U}_0$ a pairwise disj. $\mathcal{B}'$-covering for the open $\phi (\langle \rangle)$. For each $U\in \mathcal{U}_0$ Player II has an answer $\psi(\langle U\rangle)$. Notice $\mathcal{A}_0=\bigcup \lbrace \psi(\langle U\rangle): U\in \mathcal{U}_0\rbrace$ is an open covering with pairwise disjoint elements of $\mathcal{B}'$ for  $\phi(\langle \rangle)$. Using Lemma \ref{refinementforcompactendspace}, consider $\mathcal{V}_0$ a refinement of the covering $ \mathcal{A}_0$ by pairwise disjoint elements of $\mathcal{B}$. 

        \item  Notice $\mathcal{V}_0$ is an available move for Player II in $End_{\mathcal{B}}$, therefore it must be in the domain of the strategy $\phi$. Hence, $\phi(\langle \mathcal{V}_0\rangle)\subset U_0$ for a unique $U_0\in \mathcal{U}_0$. Consider $\mathcal{U}_1$ a pairwise disjoint open $\mathcal{B}'$-cover for $\phi(\langle \mathcal{V}_0\rangle)$. For each $U\in \mathcal{U}_1$ notice Player II has the answer $\psi(\langle U_0,U\rangle)$. We also have that $\mathcal{A}_1=\bigcup\lbrace \psi(\langle U_0,U\rangle ): U\in \mathcal{U}_1\rbrace$ is a pairwise disjoint open $\mathcal{B}'$-covering for $\phi(\langle \mathcal{V}_0\rangle)$. Using Lemma \ref{refinementforcompactendspace} consider $\mathcal{V}_1$ a refinement of the covering $\mathcal{A}_1$ by pairwise disjoint opens from $\mathcal{B}$. 
        
        \item Consider the history of moves of both games up until the the $n$-th turn of the game: $$\langle \phi(\langle \rangle), \mathcal{V}_0, \phi(\langle \mathcal{V}_0\rangle), \mathcal{V}_1, \cdots , \phi(\langle \mathcal{V}_0, \mathcal{V}_1, \cdots , \mathcal{V}_{n-1}\rangle), \mathcal{V}_{n}\rangle $$ 
        $$ \langle U_0, \mathcal{A}_0, U_1, \mathcal{A}_1, \cdots , U_{n-1}, \mathcal{A}_{n-1} \rangle$$ 
        Notice that $\mathcal{V}_{n}$ is an available move for Player II in the game $End_{\mathcal{B}}$, it is hence in the domain of $\phi$. In this way $\phi(\langle \mathcal{V}_0, \mathcal{V}_1, \cdots , \mathcal{V}_{n}\rangle)\subset U_{n}$ for a unique $U_n\in \mathcal{U}_n$. Consider $\mathcal{U}_{n}$ a pairwise disjoint $\mathcal{B}'$-covering for the open $\phi(\langle \mathcal{V}_0, \mathcal{V}_1, \cdots , \mathcal{V}_{n}\rangle)$. For each $U\in \mathcal{U}_{n}$ notice Player II has an answer $\psi(\langle U_0,U_1, \cdots , U_n, U\rangle)$. Now we must have that \[\mathcal{A}_n=\bigcup\lbrace \psi(\langle U_0,U_1, \cdots , U_n, U\rangle): U\in \mathcal{U}_{n}\rbrace\] is a pairwise disjoint open $\mathcal{B}'$-covering for $\phi(\langle \mathcal{V}_0, \mathcal{V}_1, \cdots , \mathcal{V}_{n}\rangle)$. Now using Lemma~\ref{refinementforcompactendspace} consider $\mathcal{V}_{n+1}$ a refinement of $\mathcal{A}_n$ by pairwise elements of $\mathcal{B}$.

    \end{itemize}

    Hence, we have $\bigcap_{n\in\omega} U_n = \bigcap_{n\in\omega} \phi(\langle \mathcal{V}_0, \mathcal{V}_1, \cdots , \mathcal{V}_{n}\rangle)$. As $\phi$ is winning in $End_{\mathcal{B}}(X)$ by hypothesis, there are unique $x\in X$ and $A\in \tau$ with $x\notin A$ such that $\bigcap_{n\in\omega} U_n = \lbrace x\rbrace \cup A$. Therefore there is a run in $End_{\mathcal{B}'}(X)$ such that Player II uses $\psi$ and loses, witnessing $\psi$ is not winning. We conclude Player II does not win $End_{\mathcal{B}'}$ for every base $B'$ of $\tau$.
\end{proof}

\begin{thm}
\label{end-counter-example}
    End-spaces are not closed under finite products.
\end{thm}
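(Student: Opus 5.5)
We take $X=\Omega(G)$ for the graph $G$ of Example \ref{ex}, i.e.\ the one-point compactification of a discrete set of size $\aleph_1$, with $0$ the point at infinity, and show that $X\times X$ is not an end space. The space $X\times X$ is compact Hausdorff and zero-dimensional, and compact zero-dimensional spaces are ultra-paracompact: refine any open cover by clopens, pass to a finite subcover, and disjointify. So Theorem \ref{thm3} applies once we find a clopen base $\mathcal{B}$ of $X\times X$ with $I\uparrow End_{\mathcal{B}}(X\times X)$.

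The intended obstruction is the point $(0,0)$, where every neighbourhood is "big in both coordinates". Take $\mathcal{B}$ to consist of all products of clopens of $X$. A clopen set of $X$ is either finite and missing $0$, or cofinite and containing $0$. Player I's strategy is to always play a basic set containing $(0,0)$. He starts with $U_0=X\times X$. Whatever disjoint cover Player II answers with, exactly one member contains $(0,0)$, and it has the form $W_1\times W_2$ with both $W_i$ cofinite. Player I then plays $V\times V$ with $V\subseteq W_1\cap W_2$ cofinite, and additionally removes one new countable-stage coordinate he tracks by bookkeeping. Every element of a Player II cover containing $(0,0)$ is cofinite in each coordinate, so Player I can always continue.

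After $\omega$ rounds, $\bigcap_n U_n\supseteq C\times C$, where $C=\{0\}\cup(\omega_1\setminus S)$ and $S$ is the countable set of removed points. Since $\omega_1\setminus S$ is uncountable, $C$ is a copy of the one-point compactification of an uncountable discrete set. The intersection $\bigcap_n U_n$ itself equals $D_1\times D_2$ with $D_i\ni 0$ and each $D_i$ co-countable. For Player II to win, this set must be written uniquely as $\{x\}\cup A$ with $A$ open and $x\notin A$. The point $x$ must be $(0,0)$, since it is the only point of the set not in its interior. Then $A=(D_1\times D_2)\setminus\{(0,0)\}$ would have to be open in $X\times X$, but it contains the point $(0,\alpha)$ for $\alpha\in D_2\setminus\{0\}$, and every neighbourhood of $(0,\alpha)$ contains $(\beta,\alpha)$ for cofinitely many $\beta$, including $\beta\in S$ outside $D_1$ once $S$ is infinite. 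So $A$ is not open. Alternatively, the decomposition fails because the set has no valid representation at all. In either case Player I wins every play, so $I\uparrow End_{\mathcal{B}}(X\times X)$.

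The main obstacle is making the removal of infinitely many points in each coordinate consistent with the rule that Player I must play inside Player II's piece. It is consistent, since removing finitely many non-zero points keeps the set basic and inside $W_1\times W_2$. One must also check that, even without the bookkeeping, Player II could never force $D_i=X$: if she could, the intersection would be a clopen neighbourhood of $(0,0)$, and then the edge point $(0,\alpha)$ with $\alpha\neq 0$ would again break openness of $A$ unless Player II carved it out, which Player I prevents by always keeping the full product shape. With $I\uparrow End_{\mathcal{B}}$ established, Theorem \ref{thm3} gives that $X\times X$ is not an end space, while $X$ is one by Example \ref{ex}. This proves Theorem \ref{end-counter-example}.
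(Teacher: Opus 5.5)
Your argument is correct and follows the paper's route: Player I keeps playing basic product sets around one fixed bad corner point, and Theorem~\ref{thm3} finishes; the only difference is the witness, since you use $X\times X$ with $X=\Omega(G)$ from Example~\ref{ex} (which even shows that the square of a single end space need not be an end space), whereas the paper uses $\Omega(T)\times(\omega_1,\tau)$ with $T$ the binary tree, where the crowded Cantor factor makes the limit set $\{\varepsilon\}\times\bigl(\omega_1\setminus\bigcup_m F_{s_m}\bigr)$ have empty interior without any bookkeeping. Two small corrections: $(0,0)$ is not the only non-interior point of $D\times D$, because every $(0,\alpha)$ and $(\alpha,0)$ with $\alpha\in D\setminus\{0\}$ is also non-interior once $X\setminus D$ is infinite, and this is exactly what rules out every decomposition; and your closing remark about play without bookkeeping is unnecessary and misreasoned, since if $X\setminus D$ were finite then $D\times D$ would be clopen and Player~II would lose by non-uniqueness, not by a failure of openness.
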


\begin{proof}
    Consider $T$ the complete binary tree of height $\omega$ with the Cantor space $\mathcal{C}$ as its end-space. For each $t\in T$, we have that $\stackrel{\circ}{\lceil t\rceil}= \lbrace s\in T : s<_{T} t\rbrace$ is finite. Moreover, for each $t\in T$, $\lfloor t\rfloor$ is a connected component of $T\setminus \stackrel{\circ}{\lceil t\rceil}$ defining a basic open $\Omega (\stackrel{\circ}{\lceil t\rceil}, \lfloor t\rfloor)$ in $\Omega (T)$. In particular, $\lbrace \Omega (\stackrel{\circ}{\lceil t\rceil}, \lfloor t\rfloor) : t\in T\rbrace$ is a clopen base for $\Omega (T)$. Consider now $(\omega_1, \tau )$ the end-space from Example \ref{ex}. Let us write explicitly a base for $(\omega_1,\tau)$ with $\lbrace A_F= \omega_1\setminus F: F\subset \omega_1\setminus \lbrace 0\rbrace \text{ with } F \text{ finite } \rbrace \cup \lbrace A_{\alpha}= \lbrace \alpha : \alpha \in \omega_1 \setminus \lbrace 0 \rbrace \rbrace $. Consider their topological product $X=\Omega (T)\times (\omega_1,\tau)$, with base \[\mathcal{B}= \lbrace \Omega (\stackrel{\circ}{\lceil t\rceil}, \lfloor t\rfloor) \times A_F, \Omega (\stackrel{\circ}{\lceil t\rceil}, \lfloor t\rfloor)\times A_{\alpha} \st t\in T, F\subset [\omega_1\setminus \lbrace 0 \rbrace]^{<\aleph_0}, \alpha\in\omega_1\setminus \lbrace 0 \rbrace \rbrace .\] Let us prove that $I\uparrow End_{\mathcal{B}}(X)$. 

    \begin{itemize}
        \item We start with a branch $R=\lbrace t_i : i\in\mathbb{N}\rbrace$ with $t_i<t_{i+1}$ from the binary tree $T$ corresponding to an end $\varepsilon$ of $T$. Let us say Player I opens the game with $U_0=\Omega (\stackrel{\circ}{\lceil t_0\rceil}, \lfloor t_0\rfloor)\times A_{\emptyset}\in \mathcal{B}$. Player II will then answer with a pairwise disjoint $\mathcal{B}$-covering $\mathcal{U}_0$ for the open $U_0$. 

        \item Let us say Player I's now chooses the open $V_1\in \mathcal{U}_0$ containing $(\varepsilon, 0)$. As $V_1=W_1\times Z_1$ has the point $(\varepsilon, 0)$ we know then $W_1=A_{F_1}$ for some $F_{k_1}\subset [\omega\setminus \lbrace 0 \rbrace ]^{<\aleph_0}$. Now Player I chooses an open $U_1=\Omega (\stackrel{\circ}{\lceil t_{n_1}\rceil}, \lfloor t_{n_1}\rfloor)\times A_{F_{s_1}}\subset V_1$ such that $t_0< t_{n_1}$ and $F_{k_1}\subset F_{s_1}$. Player II moves with a pairwise disjoint $\mathcal{B}$-covering $\mathcal{U}_1$ for the open $U_1$.

        \item In general, at the inning $m$ of the game, Player I chooses the open $V_m\in \mathcal{U}_{m-1}$ that contains the point $(\varepsilon, 0)$. As $V_m=W_m\times Z_m$ has the point $(\varepsilon, 0)$ then $W_m=A_{F_{k_m}}$ for some $F_{k_m}\subset [\omega\setminus \lbrace 0 \rbrace ]^{<\aleph_0}$ such that $F_{s_{m-1}}\subset F_{k_m}$. After that Player I moves with an open $U_n=\Omega (\stackrel{\circ}{\lceil t_{n_{m}}\rceil}, \lfloor t_{n_m}\rfloor)\times A_{F_{s_m}}\subset V_1$ such that $t_{n_{m-1}}< t_{n_m}$ and $F_{k_m}\subset F_{s_m}$. Player II responds with a pairwise disjoint open $\mathcal{B}$-cover $\mathcal{U}_m$ for the open $U_m$.
    \end{itemize}

    At the we have the play $$\bigcap_{m\in\mathbb{N}} U_m= \bigcap_{m\in\mathbb{N}} \Omega (\stackrel{\circ}{\lceil t_{n_{m}}\rceil}, \lfloor t_{n_m}\rfloor)\times \bigcap_{m\in\mathbb{N}} A_{F_{s_m}} = \lbrace \varepsilon \rbrace \times \omega_1 \setminus \bigcup_{m\in\mathbb{N}}F_{s_m}$$
    which contains $\aleph_1$ points but no open subset. It cannot be written as the union of a point and an open, and thus $I\uparrow End_{\mathcal{B}}(X)$. By  Theorem \ref{thm3} $X$ is not an end space.
\end{proof}

In particular, there \textbf{cannot} be a binary $\otimes$ over special trees such that $\Omega(T_1 \otimes T_2) = \Omega(T_1)\times \Omega(T_2)$. Observe the above examples are also edge-end spaces, and every edge end-space is an end-space so the above product also proves the edge end-spaces are not closed under topological product.

\section*{Acknowledgments}
The first named author thanks the support of Fundação de Amparo à Pesquisa do Estado de São Paulo (FAPESP), being sponsored through grant number 2025/12199-3. The second named author acknowledges the support of CAPES through grant number 001. The third named author thanks the Italian National Group for the Algebraic and Geometric Structures and their Applications (GNSAGA-INdAM) for their support. The fourth named author acknowledges
the support of Conselho Nacional de Desenvolvimento Científico e Tecnológico (CNPq) through grant number 165761/2021-0.
\bibliography{bibliography}
\bibliographystyle{plain}    

\Addresses

\end{document}